\theoremstyle{plain} 
\newtheorem{theorem}{Theorem}
\newtheorem{lemma}[theorem]{Lemma}
\newtheorem{corollary}[theorem]{Corollary}
\newtheorem{proposition}[theorem]{Proposition}
\newtheorem*{theoremTPhitoT}{Theorem \ref{thm:TPhitoT}}
\newtheorem*{theoremTtoTPhi}{Theorem \ref{thm:TtoTPhi}}
\newtheorem*{theoremTtoFPhi}{Theorem \ref{thm:TtoFPhi}}
\newtheorem*{theoremnotFPhi}{Theorem \ref{thm:notFPhi}}
\theoremstyle{definition} 
\newtheorem{definition}[theorem]{Definition}
\newtheorem*{acknowledgements}{Acknowledgements}
\newcommand{\sign}{\operatorname{sign}}
\newcommand{\supp}{\operatorname{supp}}
\title{Property $(T_{L^{\Phi}})$ and property $(F_{L^{\Phi}})$ for Orlicz spaces $L^{\Phi}$}
\author{Mamoru Tanaka\footnote{This work was supported by World Premier International Research Center Initiative (WPI), MEXT, Japan}}
\date{} 
\begin{document}

\maketitle

\begin{abstract}
An Orlicz space $L^{\Phi}(\Omega)$ is a Banach function space defined by using a Young function $\Phi$, which generalizes the $L^p$ spaces. 
We show that, for a reflexive Orlicz space $L^{\Phi}([0,1])$, a locally compact second countable group has Kazhdan's property $(T)$ if and only if it has property $(T_{L^{\Phi}([0,1])})$, which is a generalization of Kazhdan's property $(T)$ for linear isometric representations on $L^{\Phi}([0,1])$. 
We also prove that, for a Banach space $B$ whose modulus of convexity is sufficiently large, if a locally compact second countable group has Kazhdan's property $(T)$, then it has property $(F_{B})$, which is a fixed point property for affine isometric actions on $B$. 
Moreover, we see that, for an Orlicz sequence space $\ell^{\Phi\Psi}$ such that the Young function $\Psi$ sufficiently rapidly increases near $0$,  hyperbolic groups (with Kazhdan's property $(T)$) don't have property $(F_{\ell^{\Phi\Psi}})$. 
These results are generalizations of the results for $L^p$-spaces. 
\\

\noindent 
{\bf Mathematics Subject Classification (2010).} 22D12, 46E30 \\ 

\noindent 
{\bf Keywords.} Kazhdan's Property $(T)$, locally compact second countable groups, Orlicz spaces.
\end{abstract}


\section{Introduction} 

Property $(T)$ is known as a rigidity property of topological groups with respect to the irreducible unitary representations.  
In \cite{MR2316269}, they generalize Kazhdan's property $(T)$ for linear isometric representations on Banach spaces:  
Let $G$ be a topological group and $(B,\|\ \|)$ a Banach space. 
A {\it linear isometric $G$-representation} on $B$ is a continuous homomorphism $\rho :G\to O(B) $, 
where $O(B)$ denotes the group of all invertible linear isometries $B \to B$, and continuous means the action map $G \times B \to B$ is 
continuous. 
We say that a linear isometric $G$-representation $\rho$ {\it almost has invariant vectors} if for all compact subsets $K\subset G$
$$ \inf_{v\in B;\| v \|=1 } \max_{g \in K} \|\rho (g )v -v\| = 0 .  $$
Denote by $B^{\rho(G)}$ the closed subspace of $G$-fixed vectors in $B$. 
Then the $G$-representation $\rho$ descends to a linear isometric $G$-representation $\tilde \rho$ on $B/B^{\rho(G)}$. 

\begin{definition}[\cite{MR2316269}]
Let $B$ be a Banach space. 
A topological group $G$ is said to have {\it property $(T_B)$} 
if for any linear isometric $G$-representation $\rho : G\to O(B) $,
 the quotient $G$-representation $\tilde \rho : G\to O(B/B^{\rho(G)}) $ does not almost have invariant vectors. 
\end{definition}
For a Hilbert space $H$, Kazhdan's property $(T)$ is equivalent to property $(T_H)$. 
For a locally compact second countable group, Delorme \cite{MR0578893} and Guichardet \cite{MR0340464} proved that Kazhdan's property $(T)$ is equivalent to Serre's property $(FH)$, that is, every affine isometric action on a real Hilbert space has a fixed point. 
\begin{definition}[\cite{MR2316269}]
Let $B$ be a Banach space. 
A topological group $G$ is said to have {\it property $(F_B)$} 
if every affine isometric $G$-action on $B$ has a fixed point.  
\end{definition}

In \cite{MR2316269}, 
they proved the following:  
\begin{theorem}[\cite{MR2316269}]
Let $G$ be a locally compact second countable group, $B$ a Banach space. 
\begin{enumerate}
\item If $G$ has property $(F_B)$, then $G$ has property $(T_B)$. 
\item If $G$ has property $(T_{L^p([0,1])})$ for some $1\le p<\infty$, then $G$ has Kazhdan's property $(T)$. 
\item If $G$ has Kazhdan's property $(T)$, then $G$ has property $(T_{L^p(\mu)})$ for any $\sigma$-finite measure $\mu $ and any $1\le p<\infty$. 
\item If $G$ has Kazhdan's property $(T)$, then there  exists a constant $\epsilon(G)> 0$ such that $G$ has property $(F_{L^p(\mu)})$ for any $\sigma$-finite measure $\mu $ and any $1\le p<2+\epsilon(G)$. 
\end{enumerate}
\end{theorem}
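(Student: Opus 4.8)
I would handle the four parts by rather different routes, with (2), (3) and (4) all powered by the interplay between $L^p$ and $L^2$ via two tools. First, the \emph{Banach--Lamperti theorem}: for $p\neq 2$ every surjective linear isometry $T$ of $L^p(\mu)$ has the form $Tf=h\cdot(f\circ\varphi)$ with $\varphi$ a nonsingular point transformation and $|h|$ a power of a Radon--Nikodym derivative, so that $|Tf|$ depends only on $|f|$. Second, the \emph{Mazur map} $M_{p,q}\colon L^p(\mu)\to L^q(\mu)$, $M_{p,q}(f)=|f|^{p/q}\sign(f)$: it is a bijection with inverse $M_{q,p}$, carries the unit sphere onto the unit sphere, is a uniform homeomorphism on bounded sets with explicit H\"older moduli on the sphere, and --- thanks to Banach--Lamperti --- satisfies $M_{p,q}(Tf)=\widetilde{T}(M_{p,q}f)$ for a linear isometry $\widetilde{T}$ of $L^q(\mu)$. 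Hence every linear isometric $G$-representation $\rho$ on $L^p(\mu)$ is conjugated by $M_{p,2}$ to a \emph{unitary} $G$-representation $\rho_2$ on $L^2(\mu)$, with $M_{p,2}\bigl((L^p)^{\rho(G)}\bigr)=(L^2)^{\rho_2(G)}$.

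For (3) I would take a linear isometric $G$-representation $\rho$ on $L^p(\mu)$, first with $1<p<\infty$, and suppose $\tilde\rho$ on $L^p(\mu)/(L^p(\mu))^{\rho(G)}$ almost has invariant vectors. Since $L^p(\mu)$ is uniformly convex, its fixed subspace is the range of a norm-one $\rho$-equivariant projection, so the almost-invariant vectors may be taken in the complementary invariant subspace, where the quotient norm is comparable to the ambient norm; thus $\rho$ has genuine almost-invariant unit vectors $v_n$ there. Applying $M_{p,2}$ gives unit vectors $M_{p,2}v_n\in L^2(\mu)$ with $\|\rho_2(g)M_{p,2}v_n-M_{p,2}v_n\|_2=\|M_{p,2}(\rho(g)v_n)-M_{p,2}v_n\|_2\to0$ uniformly on compacta (uniform continuity of $M_{p,2}$ on the sphere), and the uniform continuity of $M_{2,p}$ keeps the $M_{p,2}v_n$ a definite distance from $(L^2(\mu))^{\rho_2(G)}$; hence $\tilde\rho_2$ on $L^2(\mu)/(L^2(\mu))^{\rho_2(G)}$ almost has invariant vectors, contradicting Kazhdan's property $(T)$, which is property $(T_{L^2(\mu)})$. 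For $p=1$, where $L^1$ is not uniformly convex, I would argue separately using the conditionally negative definite kernel $(f,g)\mapsto\|f-g\|_1$ on $L^1(\mu)$, which embeds the situation equivariantly into a Hilbert space.

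For (2) I would argue the contrapositive: that $G$ lacks Kazhdan's property $(T)$ means there is a unitary $G$-representation $\pi_0$ that almost has invariant vectors but has no nonzero invariant vectors. The Gaussian functor applied to $\pi_0$ produces a \emph{measure-preserving} $G$-action on an atomless standard probability space $(X,\gamma)$ whose Koopman representation on $L^2(X,\gamma)$ contains $\pi_0$ inside the orthocomplement of the constants; since $\pi_0$ has no invariant vectors, this yields almost-invariant unit vectors at positive distance from the fixed subspace, so the quotient Koopman representation almost has invariant vectors. The same action is isometric on $L^p(X,\gamma)$ via $f\mapsto f\circ g^{-1}$; the Mazur map $M_{2,p}$ \emph{exactly} conjugates the $L^2$-Koopman representation to the $L^p$-Koopman one, carries $(L^2(X,\gamma))^{G}$ onto $(L^p(X,\gamma))^{G}$, and, being a uniform homeomorphism of the spheres, transports the almost-invariant-mod-fixed vectors to $L^p(X,\gamma)$. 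Since $(X,\gamma)$ is atomless, $L^p(X,\gamma)\cong L^p([0,1])$ isometrically, so $G$ lacks property $(T_{L^p([0,1])})$. Part (1) is the Banach-space analogue of the Delorme--Guichardet implication $(FH)\Rightarrow(T)$: if $G$ lacks property $(T_B)$, one turns the almost-invariant vectors of the quotient representation into an unbounded $1$-cocycle with the given linear part --- an affine isometric $G$-action on $B$ with unbounded orbit, hence without fixed point --- in the classical way; no convexity or special structure of $B$ is needed here.

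Part (4) is the deepest, and the bound $p<2+\epsilon(G)$ is precisely where I expect the main obstacle. From Kazhdan's property $(T)$ I would extract a Kazhdan pair $(Q,\kappa)$ with $\kappa=\kappa(G)>0$: every unitary $G$-representation without nonzero invariant vectors satisfies $\max_{g\in Q}\|\pi(g)v-v\|\ge\kappa\|v\|$, which (via the argument behind Delorme's theorem) bounds every $1$-cocycle quantitatively in terms of $\kappa$ and $\max_{g\in Q}\|b(g)\|$. Given an affine isometric $G$-action $\alpha=(\rho,b)$ on $L^p(\mu)$: split off the norm-one-complemented fixed part, conjugate $\rho$ to a unitary $\rho_2$ on $L^2(\mu)$, transport the spectral gap of $\rho_2$ back to $\rho$ through the H\"older moduli of $M_{p,2}$ and $M_{2,p}$ on the sphere, and conclude that the $b$-orbit is bounded in $L^p(\mu)$; a bounded orbit in the uniformly convex space $L^p(\mu)$ has a unique Chebyshev center, which is $G$-fixed, yielding property $(F_{L^p(\mu)})$. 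The catch is that this transfer only survives if the modulus of convexity of $L^p$ is good enough \emph{relative to} $\kappa(G)$: by Hanner's and Clarkson's inequalities $\delta_{L^p}(t)$ depends continuously on $p$, equals $\delta_{L^2}(t)>0$ at $p=2$, and for fixed $t\in(0,2)$ decreases to $0$ as $p\to\infty$, so the quantitative estimates hold only for $p$ in an interval about $2$ --- this produces $\epsilon(G)$; the range $p\le2$ (including the non-uniformly-convex case $p=1$, handled via the conditionally negative definite $L^1$-metric as in (3)) needs no restriction. Isolating the sharp convexity hypothesis on a general Banach space $B$ that makes this argument work --- in place of the crude parameter $p$ --- is exactly the refinement carried out in the present paper.
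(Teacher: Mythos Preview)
The theorem is quoted from \cite{MR2316269} and the paper does not reprove it; it proves Orlicz generalizations of (2), (3), (4) as Theorems~\ref{thm:TPhitoT}, \ref{thm:TtoTPhi}, \ref{thm:TtoFPhi}. For (3) your Banach--Lamperti plus Mazur-map route is essentially the paper's argument for Theorem~\ref{thm:TtoTPhi}, except that the paper works directly with the quotient, lifting almost-invariant classes and showing $\|\rho(g)f_n-f_n\|_B$ is small in $B$ itself, rather than invoking an equivariant projection. For (2) your Gaussian-plus-Mazur route is correct but the paper's proof of Theorem~\ref{thm:TPhitoT} is different and more robust: it uses the Connes--Weiss asymptotically invariant sets $E_n$ and computes the $L^\Phi$-norms of $2\chi_{E_n}-\chi_\Omega$ directly, with no Mazur map; this is why Theorem~\ref{thm:TPhitoT} needs only $0<\Phi(t)<\infty$ and not $\Phi\in\Delta_2\cap\nabla_2$.

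For (4) there is a real gap in your outline. The Mazur map conjugates the \emph{linear} isometric representation $\rho$ to a unitary $\rho_2$, and this does give a spectral gap for $\rho$ on $L^p$ --- that is exactly property $(T_{L^p})$, i.e.\ part (3). But the Mazur map is nonlinear and does \emph{not} carry the cocycle $b$ to a cocycle for $\rho_2$; the ``argument behind Delorme's theorem'' that bounds Hilbert cocycles from the spectral gap uses the Hilbert structure (conditionally negative definite functions, or linear averaging) and does not follow from a Banach spectral gap alone. Indeed, Yu's theorem shows that hyperbolic groups with property $(T)$ have $(T_{\ell^p})$ for all $p$ (by (3)) yet admit unbounded $\ell^p$-cocycles for large $p$, so ``spectral gap of $\rho$ on $L^p$ $\Rightarrow$ bounded cocycle'' is simply false without a further idea you have not supplied. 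The paper's proof of Theorem~\ref{thm:TtoFPhi} avoids the Mazur map entirely and argues by contradiction with an ultraproduct: a sequence of counterexample affine actions on spaces $B_n$ with $\delta_{B_n}\ge\delta_{L^{2+1/n}}$ yields, in the ultralimit, an affine isometric action without fixed point on a space $B_\omega$ satisfying $\delta_{B_\omega}\ge\delta_{L^2}$, hence a Hilbert space --- contradicting $(FH)$. This handles the affine action directly, which is why it both closes the gap and yields the general Banach-space statement you anticipate in your last sentence.
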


On the other hand, Yu proved 
\begin{theorem}[\cite{MR2221161}]
If $\Gamma$ is a hyperbolic group, then there exists $2 \le p(\Gamma ) < \infty$ such that $\Gamma $ admits a proper affine isometric action on an $\ell^p$-space for $p\ge p(\Gamma)$.
\end{theorem}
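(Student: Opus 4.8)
The plan is to reduce the statement to the construction of a \emph{proper $1$-cocycle}: it suffices to exhibit, for some isometric linear representation $\pi$ of $\Gamma$ on an $\ell^p$-space $\ell^p(S)$ (with $S$ a countable $\Gamma$-set and $\pi$ the associated permutation representation, which is automatically isometric), a map $b\colon\Gamma\to\ell^p(S)$ with $b(gh)=b(g)+\pi(g)b(h)$ and $\|b(g)\|_p\to\infty$ as $g$ leaves every finite subset of $\Gamma$; then $g\cdot v=\pi(g)v+b(g)$ is a proper affine isometric action. I would work not with $\Gamma$ abstractly but with a Cayley graph $X$ of $\Gamma$ for a finite generating set, or its Rips complex, which is a $\delta$-hyperbolic geodesic space carrying a proper cocompact isometric $\Gamma$-action; the only geometric input used is the thin-triangles inequality.

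For the cocycle I would imitate the elementary construction available when $X$ is a tree: there $b(g)$ is the geodesic $1$-chain from $e$ to $g$, regarded as an antisymmetric element of $\ell^p$ of the set of oriented edges; the identity $b(gh)=b(g)+\pi(g)b(h)$ holds because the concatenation of $1$-chains $[e,g]+[g,gh]$ equals $[e,gh]$ (the back-and-forth over the overlap cancels), and $\|b(g)\|_p^p=d(e,g)$ is already proper for every $p\ge 1$. In a general hyperbolic group geodesics are neither unique nor concatenative, so one replaces the single geodesic by a $\Gamma$-equivariant ``filling'' datum $c(x,y)\in\ell^p(S)$ --- e.g.\ a normalized average over all geodesics from $x$ to $y$, or a counting measure on the metric balls separating $x$ from $y$ --- built so that $c(gx,gy)=\pi(g)c(x,y)$ and so that $c(x,z)=c(x,y)+c(y,z)$ whenever $y$ lies on a geodesic from $x$ to $z$. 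Setting $b(g)=c(e,g)$ then gives the cocycle identity up to an error $c(e,g)+\pi(g)c(g,gh)-c(e,gh)$ which, by $\delta$-thinness of the triangle with vertices $e,g,gh$, is concentrated near the centre of that triangle and bounded in $\ell^p$-norm by a constant depending only on $\delta$ and $p$; thus $b$ is a \emph{quasi-cocycle}.

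The remaining work is to promote this quasi-cocycle to a genuine cocycle without destroying properness, and this is where $p$ must be taken large. The strategy is a straightening argument: since $\ell^p$ is uniformly convex for $1<p<\infty$ one can take barycentres of bounded sets, and a bounded quasi-cocycle valued in a uniformly convex space can be corrected to a cocycle by averaging the defects along the tree-like coarse structure that a hyperbolic space exhibits at each scale; the quantitative input is a summability estimate of the shape $\sum_{n}(\text{number of elements at distance }n)\cdot(\text{weight})^{p}<\infty$, which holds precisely when $p$ exceeds a threshold governed by the exponential growth rate of $\Gamma$ (equivalently, the conformal dimension of $\partial\Gamma$). One then checks that the correction is small compared with the main term $\|c(e,g)\|_p$, so $\|b(g)\|_p\to\infty$ persists, yielding the proper affine isometric action for all $p\ge p(\Gamma)$. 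I expect the main obstacle to be exactly this balancing act: arranging the filling data $c(x,y)$ to carry enough mass to force properness while having tails thin enough that the corrected cocycle is genuinely in $\ell^p$ and the straightening series converges --- in a tree both are automatic, and it is the $\delta$-fuzziness of a general hyperbolic space that forces $p$ to be large.
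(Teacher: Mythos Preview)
The paper does not prove this statement directly---it is quoted from Yu---but the proof of Theorem~\ref{thm:notFPhi} reproduces Yu's argument in the Orlicz setting, and that is the relevant comparison.

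Your overall picture is right: one wants a proper $1$-cocycle for a permutation-type representation on $\ell^p(\Gamma\times\Gamma)$, and the threshold $p(\Gamma)$ comes from balancing an exponential decay against the exponential growth of $\Gamma$. But the route through a quasi-cocycle followed by ``straightening'' is both unnecessary and contains a genuine gap. The actual construction avoids quasi-cocycles entirely. The key input you are missing is Mineyev's $\Gamma$-equivariant bicombing, which produces a map $f\colon\Gamma\times\Gamma\to C_0(\Gamma)$ (convex combinations supported near the bicombing line) satisfying the exponential estimate $\|f(b,a)-f(b,a')\|_1\le L\lambda^{(a|a')_b}$. After normalizing to $h(b,a)=f(b,a)/\|f(b,a)\|$ one sets $\eta(\gamma)=h(\gamma,e)$ and takes $b(g):=\pi(g)\eta-\eta$. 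This is a \emph{genuine} cocycle automatically, being a formal coboundary; the only issue is whether $\pi(g)\eta-\eta$ lands in $\ell^p(\Gamma,\ell^p(\Gamma))$, and Mineyev's estimate gives exactly this once $\lambda^p$ times the growth rate is less than $1$. Properness is then read off directly: for $\gamma$ on a geodesic $[e,g]$ away from the endpoints, $h(\gamma,e)$ and $h(\gamma,g)$ have disjoint supports, so $\|\pi(g)\eta-\eta\|$ grows with $d(e,g)$.

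Your proposed straightening step, by contrast, asks that the bounded $2$-cocycle $\omega(g,h)=b(gh)-b(g)-\pi(g)b(h)$ be a bounded coboundary in $\ell^p$. Uniform convexity does not provide this; hyperbolic groups have large bounded cohomology, and ``averaging the defects along the tree-like coarse structure'' is not a known mechanism for killing such classes. The formal-coboundary trick with Mineyev's bicombing is precisely what makes the problem disappear: there is nothing to straighten.
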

There are hyperbolic groups which have Kazhdan's property $(T)$, for example, the cocompact lattices of $Sp_{n,1}(\mathbb{R})$ $(n\ge 2)$. 
Hence property $(F_{\ell^p})$ and property $(T_{\ell^p})$ are not equivalent for sufficiently large $p\ge 2$.

T. Yokota asked us whether results about isometric group actions on $L^p$-spaces is true for Orlicz spaces $L^\Phi$ under appropriate conditions. 
Here an Orlicz space is a generalization of $L^p$-spaces, which is defined in section \ref{sec:Orlicz}. 
We prove the following:   

\begin{theorem}\label{thm:TPhitoT}
Let $G$ be a locally compact second countable group,  
$\Phi $ a Young function with $0< \Phi(t) < \infty$ for all $t>0$, 
and $\mathbb{K}=\mathbb{R}$ or $\mathbb{C}$. 
If $G$ has property $(T_{L^\Phi([0,1], \mathbb{K})})$, then it has Kazhdan's property $(T)$. 
\end{theorem}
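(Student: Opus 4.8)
The plan is to mimic the classical argument (as in the $L^p$ case, item (2) of the cited theorem) that shows a unitary representation almost having invariant vectors on $L^2$ can be detected inside a suitable $L^\Phi$ representation. The strategy is to take an arbitrary unitary representation $\pi$ of $G$ on a Hilbert space $H$ that almost has invariant vectors, and to manufacture from it a linear isometric $G$-representation on $L^\Phi([0,1],\mathbb{K})$ that also almost has invariant vectors but has no nonzero invariant vectors; property $(T_{L^\Phi([0,1],\mathbb{K})})$ then forces $\pi$ to have a nonzero invariant vector, which is Kazhdan's property $(T)$ (using that $G$ is locally compact second countable, so it suffices to test property $(T)$ on a single representation, or to use the standard reduction to representations without invariant vectors).

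First I would recall the Gaussian/Mazur-map type construction: given a unitary representation $\pi$ on a \emph{real} Hilbert space $H$ (handle the complex case by restriction of scalars), one builds the associated measure-preserving Gaussian action of $G$ on a probability space $(\Omega,\mu)$, or alternatively uses the canonical construction embedding $H$ into $L^0(\Omega,\mu)$ as Gaussian random variables; then for any $1\le q<\infty$ there is an isometric (up to a constant) copy of $H$ inside $L^q(\Omega,\mu)$ on which the induced Koopman representation restricts to (a multiple of) $\pi$. The key point is that the Koopman representation of a measure-preserving action is isometric not just on every $L^q$ but on \emph{every} rearrangement-invariant space, in particular on $L^\Phi(\Omega,\mu)$, since it permutes the measure and Orlicz norms depend only on the distribution of $|f|$. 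Next I would transport $L^\Phi(\Omega,\mu)$ isometrically onto $L^\Phi([0,1],\mathbb{K})$: since $\Phi$ satisfies $0<\Phi(t)<\infty$ for all $t>0$, the Orlicz space over any non-atomic separable probability space is isometrically isomorphic to $L^\Phi([0,1])$ via a measure isomorphism $\Omega\to[0,1]$, and this intertwines the $G$-action with a linear isometric $G$-representation $\rho$ on $L^\Phi([0,1],\mathbb{K})$. Continuity of $\rho$ follows from continuity of the action on $L^1\cap L^\infty$ and density/uniform-integrability arguments.

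Then I would check the two required properties of $\rho$. For \emph{almost invariant vectors}: if $v_n\in H$ are unit vectors with $\|\pi(g)v_n-v_n\|\to 0$ uniformly on compacta, their images $f_n$ under the Gaussian-type embedding are (after normalizing in the $\Phi$-norm) almost invariant for $\rho$; the estimate $\|\rho(g)f_n-f_n\|_\Phi\le C\,\|\pi(g)v_n-v_n\|_H$ comes from the fact that the $\Phi$-norm of a centered Gaussian variable is controlled by its $L^2$-norm (both norms being proportional to the standard deviation), so small $L^2$-displacement gives small $\Phi$-displacement. For \emph{no nonzero invariant vectors}: I would arrange the construction (e.g. by replacing $\pi$ with $\pi\oplus(\text{regular-type representation})$, or by using the mean-zero part of the Gaussian space) so that the resulting $\rho$ has $L^\Phi([0,1],\mathbb{K})^{\rho(G)}=0$; concretely, the closed subspace spanned by the Gaussian copy of $H$ carries $\pi$, which has no invariant vectors once we pass to $\pi$ without invariant vectors, and one checks this subspace is $\rho(G)$-invariant and $\rho$ acts on the ambient $L^\Phi$ with no global fixed nonzero vector by an ergodicity argument on the Gaussian action. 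Applying property $(T_{L^\Phi([0,1],\mathbb{K})})$ to $\rho$ then yields a contradiction, completing the proof.

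The main obstacle I anticipate is controlling the interaction between the (possibly non-homogeneous, non-$\Delta_2$) Young function $\Phi$ and the Gaussian construction: one needs that the Orlicz norm of a nonzero Gaussian variable is finite and comparable to its $L^2$-norm (finiteness uses only $\Phi(t)<\infty$, but the \emph{two-sided} comparison needed for the almost-invariance estimate is more delicate when $\Phi$ grows slowly or is not doubling), and one needs the isometric identification of $L^\Phi$ over a non-atomic probability space with $L^\Phi([0,1])$, which is standard but should be stated carefully. A secondary technical point is verifying strong continuity of $\rho$ in the Orlicz norm when $\Phi$ fails the $\Delta_2$-condition (so $L^\Phi$ need not be separable); here one restricts attention to the closure of the Gaussian subspace, or more simply works with the subspace $E^\Phi$ (closure of bounded functions) which is $G$-invariant and separable, and runs the whole argument there. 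I expect the rest to be routine adaptation of the $L^p$ proof.
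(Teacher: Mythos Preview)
Your Gaussian-embedding approach has a genuine gap that the hypothesis $0<\Phi(t)<\infty$ does not repair: a standard Gaussian variable need not lie in $L^\Phi$ at all. For instance, take $\Phi(t)=e^{t^3}-1$; this is a Young function with $0<\Phi(t)<\infty$ for every $t>0$, yet for $X\sim N(0,1)$ one has $\int \Phi(a|X|)\,d\mu=\infty$ for every $a>0$, since $e^{a^3 x^3}$ beats the Gaussian tail $e^{-x^2/2}$. So the ``Gaussian copy of $H$'' you want to sit inside $L^\Phi(\Omega)$ simply is not there, and the claimed comparison $\|\rho(g)f_n-f_n\|_{(\Phi)}\le C\|\pi(g)v_n-v_n\|_H$ has no content. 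Your own caveat that ``finiteness uses only $\Phi(t)<\infty$'' is exactly the mistake: pointwise finiteness of $\Phi$ says nothing about integrability of $\Phi(a|f|)$ for unbounded $f$.

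The paper avoids this entirely by working with \emph{bounded} test functions. It argues the contrapositive via Connes--Weiss: if $G$ fails property $(T)$, there is an ergodic probability-measure-preserving $G$-action with an asymptotically invariant sequence of sets $E_n$ of measure $1/2$, with $\mu(gE_n\triangle E_n)\to 0$ uniformly on compacta. The Koopman representation $\rho$ on $L^\Phi$ has $B^{\rho(G)}=\mathbb{K}\chi_\Omega$ by ergodicity, and the images of $2\chi_{E_n}-\chi_\Omega$ in the quotient $B/B^{\rho(G)}$ are the almost invariant vectors. Because these are $\{\pm 1\}$-valued, the Orlicz-norm estimates reduce to evaluating $\Phi^{-1}$ at explicit numbers: one gets $\|\tilde f_n\|\ge 1/\Phi^{-1}(2)$ and $\|\tilde\rho(g)\tilde f_n-\tilde f_n\|\le 2/\Phi^{-1}(1/\mu(gE_n\triangle E_n))\to 0$. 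No growth or $\Delta_2$ assumption on $\Phi$ is needed, only that $\Phi$ is a continuous bijection of $[0,\infty)$, which follows from $0<\Phi(t)<\infty$. If you want to keep a construction-from-representations flavour, you would still need to pass from your Gaussian action to asymptotically invariant \emph{sets} (or some other bounded observables) before touching $L^\Phi$; but at that point you are essentially redoing the Connes--Weiss step.
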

$L^p$ spaces ($1\le p<\infty$) are Orlicz spaces satisfying the assumption of Theorem \ref{thm:TPhitoT}. 

\begin{theorem}\label{thm:TtoTPhi}
Let $G$ be a locally compact second countable group, and $\Phi$ an N-function such that $\Phi \in \Delta_2^\Omega \cap \nabla_2^\Omega$, and 
\begin{center}
$(1)$ $\Omega=[0,1]$ and $\mathbb{K}=\mathbb{R}$,  \ \ or \ \ \
$(2)$ $\Omega=[0,1]$ and $\mathbb{K}=\mathbb{C}$,  \ \ or \ \ \ 
$(3)$ $\Omega=\mathbb{N}$ and $\mathbb{K}=\mathbb{C}$. 
\end{center}
If $G$ has Kazhdan's property $(T)$, then it has property $(T_{L^\Phi(\Omega,\mathbb{K})})$ with respect to gauge norm. 
\end{theorem}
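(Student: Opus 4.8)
The plan is to conjugate an arbitrary linear isometric $G$-representation $\rho$ on $B:=L^{\Phi}(\Omega,\mathbb{K})$ (with its gauge norm) to a unitary representation on $L^{2}(\Omega,\mathbb{K})$ by a generalized Mazur map and then feed the latter into Kazhdan's property $(T)$. First I would record the consequences of $\Phi\in\Delta_{2}^{\Omega}\cap\nabla_{2}^{\Omega}$ that will be used: $B$ is reflexive and uniformly convex; $\int_{\Omega}\Phi(|f|/\|f\|_{\Phi})\,d\mu=1$ for every nonzero $f\in B$, so that the gauge-norm unit sphere $S(B)$ is exactly the modular sphere $\Sigma:=\{f\in B:\int_{\Omega}\Phi(|f|)\,d\mu=1\}$; and, since $B$ is reflexive and strictly convex, every coset of the closed subspace $B^{\rho(G)}$ contains a unique element of minimal norm. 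This yields a homogeneous section $s$ of the quotient map $B\to B/B^{\rho(G)}$ with $\|s(\bar v)\|=\|\bar v\|$, which is $G$-equivariant (by uniqueness of the minimal-norm element) and, using the modulus of uniform convexity of $B$, uniformly continuous on $S(B/B^{\rho(G)})$.

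Next I would bring in the structure of the isometries. By the known description of the surjective linear isometries of an Orlicz space, each $\rho(g)$ is a weighted composition operator $(\rho(g)f)(\omega)=u_{g}(\omega)\,f(\sigma_{g}^{-1}\omega)$; when $\Phi$ is not equivalent to a power function the weight $u_{g}$ is necessarily unimodular and $\sigma_{g}$ measure preserving, and when $\Phi\sim t^{p}$ the space $B$ is, up to an equivalent scaling of the gauge norm, an $L^{p}(\Omega)$-space with $1<p<\infty$, a case already covered by \cite{MR2316269}. Since $\rho$ is a continuous homomorphism one obtains a continuous measure-preserving action of $G$ on $(\Omega,\mu)$ and a unimodular $1$-cocycle $u$, and the associated weighted Koopman operators $\pi(g)h:=u_{g}\cdot(h\circ\sigma_{g}^{-1})$ form a strongly continuous unitary representation $\pi$ of $G$ on $L^{2}(\Omega,\mathbb{K})$. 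I would then define the generalized Mazur map $M_{\Phi}\colon\Sigma\to S(L^{2}(\Omega,\mathbb{K}))$ by $M_{\Phi}(f):=\sign(f)\sqrt{\Phi(|f|)}$, with inverse $h\mapsto\sign(h)\,\Phi^{-1}(|h|^{2})$; a pointwise computation using $|u_{g}|\equiv1$ shows $M_{\Phi}\circ\rho(g)=\pi(g)\circ M_{\Phi}$ on $\Sigma$, so $M_{\Phi}$ intertwines $\rho$ and $\pi$ on the spheres and in particular carries $B^{\rho(G)}\cap\Sigma$ bijectively onto the set of fixed unit vectors of $\pi$. The step I expect to be the main obstacle — and the only genuinely analytic one — is to show that $M_{\Phi}$ and $M_{\Phi}^{-1}$ are \emph{uniformly} continuous on the respective spheres; this is precisely where the two-sided control on the growth of $\Phi$ coming from $\Delta_{2}^{\Omega}\cap\nabla_{2}^{\Omega}$ is needed, and it is the Orlicz-space analogue of the classical uniform continuity of the Mazur map $S(L^{p})\to S(L^{2})$. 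Everything else transcribes the $L^{p}$ argument; the work specific to Orlicz spaces is to extract a modulus of continuity from $\Phi$ alone.

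Granting these ingredients, I would argue by contradiction. If $\tilde\rho$ on $B/B^{\rho(G)}$ almost had invariant vectors, pick $\bar v_{n}\in S(B/B^{\rho(G)})$ with $\sup_{g\in K}\|\tilde\rho(g)\bar v_{n}-\bar v_{n}\|\to0$ for every compact $K\subset G$, and set $v_{n}:=s(\bar v_{n})\in S(B)$; equivariance of $s$ gives $\rho(g)v_{n}=s(\tilde\rho(g)\bar v_{n})$, uniform continuity of $s$ gives $\sup_{g\in K}\|\rho(g)v_{n}-v_{n}\|_{\Phi}\to0$, and $\operatorname{dist}(v_{n},B^{\rho(G)})=\|\bar v_{n}\|=1$. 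Then $w_{n}:=M_{\Phi}(v_{n})\in S(L^{2})$ satisfies $\sup_{g\in K}\|\pi(g)w_{n}-w_{n}\|_{2}\to0$, so $\pi$ almost has invariant vectors. Split $w_{n}=a_{n}+b_{n}$ orthogonally, with $a_{n}$ fixed by $\pi$ and $b_{n}$ in the orthogonal complement of the $\pi$-fixed subspace. If $\liminf_{n}\|b_{n}\|>0$, then $\|\pi(g)b_{n}-b_{n}\|\le\|\pi(g)w_{n}-w_{n}\|\to0$ shows that $b_{n}/\|b_{n}\|$ are (along a subsequence) almost invariant vectors for the restriction of $\pi$ to the orthogonal complement of its fixed subspace, which has no nonzero invariant vectors — contradicting Kazhdan's property $(T)$. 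If instead $\|b_{n}\|\to0$, then $\|a_{n}\|\to1$, so, writing $\hat a_{n}:=a_{n}/\|a_{n}\|$, one has $\|w_{n}-\hat a_{n}\|_{2}\to0$; applying the uniformly continuous map $M_{\Phi}^{-1}$, which sends $\pi$-fixed unit vectors to $\rho$-fixed unit vectors, gives $\operatorname{dist}(v_{n},B^{\rho(G)})\to0$, contradicting $\operatorname{dist}(v_{n},B^{\rho(G)})=1$. Either way we reach a contradiction, so no such $\rho$ exists and $G$ has property $(T_{L^{\Phi}(\Omega,\mathbb{K})})$ with respect to the gauge norm.
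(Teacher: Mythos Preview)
Your overall strategy---conjugate $\rho$ to a unitary representation on $L^{2}$ by the generalized Mazur map and apply property $(T)$---is exactly the paper's, and you correctly identify uniform continuity of the Mazur map as the key analytic ingredient (the paper proves H\"older continuity, Theorem~\ref{theorem:unifhomeo}). But there is a genuine gap in your passage from the quotient to $B$. You assert that $\Phi\in\Delta_{2}^{\Omega}\cap\nabla_{2}^{\Omega}$ makes $(L^{\Phi},\|\cdot\|_{(\Phi)})$ uniformly convex (and strictly convex); it does not, and the paper says so explicitly: uniform convexity of $L^{\Phi}$ requires strictly stronger hypotheses on $\Phi$. Concretely, an N-function that is affine on an interval can still lie in $\Delta_{2}\cap\nabla_{2}$, and then $L^{\Phi}$ is not even strictly convex. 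Your section $s$ depends on uniform convexity both for the uniqueness of the minimal-norm representative and for its uniform continuity, so the step ``$\sup_{g\in K}\|\rho(g)v_{n}-v_{n}\|_{(\Phi)}\to 0$'' is unjustified. The paper bypasses the section entirely with an elementary iteration: if $h_{g,n}\in B^{\rho(G)}$ satisfies $\|\rho(g)f_{n}-f_{n}-h_{g,n}\|<1/n^{2}$, then telescoping (using $\rho(g^{i-1})h_{g,n}=h_{g,n}$) gives $\|\rho(g^{n})f_{n}-f_{n}-n\,h_{g,n}\|<1/n$, while $\|\rho(g^{n})f_{n}-f_{n}\|\le 2\|f_{n}-h_{n}\|\le 2+2/n^{2}$, forcing $\|h_{g,n}\|=O(1/n)$ and hence $\|\rho(g)f_{n}-f_{n}\|=O(1/n)$. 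No convexity beyond the norm axioms is used.

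A secondary issue is your handling of the isometry structure. The dichotomy ``$\Phi\sim t^{p}$, else the weight $u_{g}$ is unimodular'' is not airtight as stated: there are N-functions $\Phi(t)=t^{p}g(\log t)$ with non-constant periodic $g$, lying in $\Delta_{2}\cap\nabla_{2}$, for which $L^{\Phi}[0,1]$ admits isometries with $|u_{g}|\not\equiv 1$; and even when $\Phi\sim t^{p}$ the gauge norm is not a scalar multiple of $\|\cdot\|_{p}$, so one cannot simply invoke \cite{MR2316269} for $L^{p}$. The paper avoids any case split: from the general relation $\Phi(|h(x)|\alpha)=r(x)\Phi(\alpha)$ (equation~(\ref{eq:trans-theta})) one computes directly that the Mazur conjugate of $Uf(x)=h(x)f(Tx)$ is $f\mapsto r(x)^{1/2}\sign(h(x))\,f(Tx)$, which is linear and unitary on $L^{2}$ whether or not $|h|\equiv 1$. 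With that formula your pointwise intertwining argument goes through verbatim, so the fix here is easy; the convexity gap above is the one that actually forces a different argument.
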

For an N-function $\Phi$, $\Phi \in \Delta_2^\Omega \cap \nabla_2^\Omega$ if and only if $L^\Phi(\Omega, \mathbb{K})$ is reflexive. 
Hence $L^p$ spaces ($1 < p < \infty$) are Orlicz spaces satisfying the assumption of Theorem \ref{thm:TtoTPhi}. 

\begin{corollary}
Let $G$ be a locally compact second countable group.  
For a reflexive Orlicz space $L^\Phi([0,1])$, 
$G$ has has Kazhdan's property $(T)$ if and only if it has property $(T_{L^\Phi([0,1], \mathbb{K})})$. 
\end{corollary}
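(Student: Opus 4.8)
The plan is to obtain the Corollary as an immediate consequence of Theorems \ref{thm:TPhitoT} and \ref{thm:TtoTPhi}, the only real work being to check that the single hypothesis ``$L^{\Phi}([0,1])$ is reflexive'' puts $\Phi$ simultaneously inside the hypotheses of both theorems. First I would recall the standard fact, already used in the discussion following Theorem \ref{thm:TtoTPhi}, that a reflexive Orlicz space on $[0,1]$ is, after passing to an equivalent Young function, of the form $L^{\Psi}([0,1])$ for an N-function $\Psi$ with $\Psi \in \Delta_2^{[0,1]} \cap \nabla_2^{[0,1]}$, and that this replacement changes neither the underlying Banach space nor the gauge norm (which is the norm with respect to which the Corollary is to be understood, and the norm appearing in Theorem \ref{thm:TtoTPhi}). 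Thus we may and do assume that $\Phi$ itself is an N-function in $\Delta_2^{[0,1]} \cap \nabla_2^{[0,1]}$; in particular $0 < \Phi(t) < \infty$ for every $t>0$, since an N-function is finite-valued everywhere and strictly positive away from the origin.

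For the forward implication, assume $G$ has Kazhdan's property $(T)$. Since $\Phi \in \Delta_2^{[0,1]} \cap \nabla_2^{[0,1]}$, Theorem \ref{thm:TtoTPhi} applies with $\Omega = [0,1]$ — in case $(1)$ if $\mathbb{K} = \mathbb{R}$ and in case $(2)$ if $\mathbb{K} = \mathbb{C}$ — and gives that $G$ has property $(T_{L^{\Phi}([0,1],\mathbb{K})})$ with respect to the gauge norm, which is exactly the assertion.

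For the reverse implication, assume $G$ has property $(T_{L^{\Phi}([0,1],\mathbb{K})})$. By the normalization above, $\Phi$ is a Young function with $0 < \Phi(t) < \infty$ for all $t > 0$, so Theorem \ref{thm:TPhitoT} applies directly and yields Kazhdan's property $(T)$ for $G$. Combining the two implications proves the equivalence.

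The main (and essentially only) obstacle is the bookkeeping in the first paragraph: one must be careful that the reflexivity hypothesis genuinely allows one to invoke both theorems at once, i.e.\ that replacing $\Phi$ by an equivalent N-function produces the same Banach space equipped with the same gauge norm and that this is the norm intended in the statement ``property $(T_{L^{\Phi}([0,1],\mathbb{K})})$'' of the Corollary. Once this normalization is fixed, the Corollary is a one-line combination of Theorems \ref{thm:TPhitoT} and \ref{thm:TtoTPhi}.
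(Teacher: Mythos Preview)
Your proposal is correct and matches the paper's approach: the paper gives no separate proof of the Corollary, treating it as immediate from Theorems~\ref{thm:TPhitoT} and~\ref{thm:TtoTPhi} together with the stated characterization that, for an N-function $\Phi$, reflexivity of $L^\Phi([0,1])$ is equivalent to $\Phi\in\Delta_2^{[0,1]}\cap\nabla_2^{[0,1]}$. One small caution on your bookkeeping paragraph: passing to an equivalent N-function yields an \emph{equivalent} gauge norm, not the identical one, so rather than arguing via replacement it is cleaner (and what the paper tacitly does) to read the Corollary as already assuming $\Phi$ is an N-function, whence reflexivity gives $\Phi\in\Delta_2^{[0,1]}\cap\nabla_2^{[0,1]}$ directly and no normalization step is required.
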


A Banach space $B$ is said to be {\it uniformly convex} 
if for every $\epsilon >0$ the modulus of convexity 
$$ \delta_B(\epsilon) = \inf \left\{ 1-\left\|\frac{u+v}{2} \right\| :  \|u\|= \|v\|= 1, \|u-v\|\ge \epsilon \right\}$$
is positive. 
A Banach space $B$ is said to be {\it uniformly smooth} 
if the modulus of smoothness 
$$ \rho_B(\tau ) = \inf \left\{ \frac{\|u+v\| + \|u-v\|}{2} -1 : \|u\|= 1, \|v\|= \tau \right\}, $$
satisfies $\lim_{\tau \to 0}\frac{\rho(\tau)}{\tau}=0$. 
By \cite{MR1501880}, the modulus of convexity is calculated as $\delta_{L^p}(\epsilon)=1-(1-(\frac{\epsilon}{2})^p)^{\frac{1}{p}}$ for $p\ge 2$.  
Uniformly convex (or uniformly smooth) Banach spaces are reflexive.

\begin{theorem} \label{thm:TtoFPhi}
If $G$ has Kazhdan's property $(T)$, then there exists a constant $\epsilon (G)>0$ such that $G$ has property $(F_{B})$ for every real $($or complex$)$ Banach spaces $B$ with $\delta_B(t) \ge \delta_{L^{2+\epsilon(G)}}(t)$ for all $0<t<2$ $($or with $\rho_B(t) \le \rho_{L^{2+\epsilon(G)}}(t)$ for all $t>0)$. 
\end{theorem}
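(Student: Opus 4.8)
The plan is to reduce everything to the known $L^p$ case (part (4) of the theorem of \cite{MR2316269}) by comparing the relevant affine isometric action on $B$ with a Hilbert-space situation obtained from the modulus-of-convexity hypothesis. Suppose $G$ has Kazhdan's property $(T)$, so that the constant $\epsilon(G)>0$ from part (4) of \cite{MR2316269}'s theorem is available, and set $p = 2+\epsilon(G)$. Let $\alpha$ be an affine isometric $G$-action on a Banach space $B$ with $\delta_B(t)\ge\delta_{L^p}(t)$ for all $0<t<2$; write $\alpha(g)v = \pi(g)v + b(g)$ for the associated linear isometric representation $\pi$ and cocycle $b\colon G\to B$. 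The goal is to show $b$ is a coboundary, equivalently that the orbit $\{\alpha(g)0 : g\in G\}$ (which spans an affine subspace we may assume is all of $B$) is bounded; boundedness of one orbit then yields a fixed point by taking the Chebyshev center, which exists and is unique because $B$, being uniformly convex (its modulus of convexity dominates that of $L^p$, which is positive), is uniformly convex and hence reflexive and strictly convex.

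The key step is therefore to prove the orbit is bounded, and this is where I expect the main work to lie. The natural route is to run the standard argument that property $(T)$ forces linear isometric representations on uniformly convex spaces to have almost-invariant vectors only trivially, but to extract a quantitative bound one compares $B$ with $L^p([0,1])$ via a Mazur-map-type or ultrapower argument: the hypothesis $\delta_B\ge\delta_{L^p}$ says $B$ is "at least as convex" as $L^p$, and the proof of part (4) in \cite{MR2316269} really only uses the modulus of convexity of $L^p$ (through the estimate that allows one to average a cocycle against a Kazhdan-type almost-invariant vector and get genuine convergence). I would isolate from \cite{MR2316269} the precise inequality on $\delta_{L^p}$ that drives their fixed-point argument — essentially that $\delta_{L^p}(\epsilon)\ge c\,\epsilon^p$ for a universal $c$, or the uniform convexity modulus controlling how close the midpoint of two nearly-invariant vectors is to being invariant — and observe that every step goes through verbatim with $\delta_B$ in place of $\delta_{L^p}$ because the inequalities only get better. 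The conclusion is that any affine isometric $G$-action on $B$ has bounded orbits, hence a fixed point, i.e. $G$ has property $(F_B)$.

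For the dual statement with the modulus of smoothness, I would pass to the dual space: if $\rho_B(t)\le\rho_{L^p}(t)$ for all $t>0$, then by the Lindenstrauss duality formula $\rho_{B^*}(\tau) = \sup_{0<\epsilon<2}\{\tau\epsilon/2 - \delta_B(\epsilon)\}$ (and symmetrically), the hypothesis on $\rho_B$ translates into $\delta_{B^*}(\epsilon)\ge\delta_{L^{p'}}(\epsilon)$ for the conjugate exponent $p' = p/(p-1) < 2$; but for $p'<2$ one should instead note that $L^p$ and $L^{p'}$ have the same fixed-point behaviour under property $(T)$ in the range covered by part (4), or more directly observe that a uniformly smooth $B$ is reflexive with $B^{**}=B$ and that an affine isometric action on $B$ dualizes appropriately. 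The cleanest path is: uniform smoothness of $B$ implies uniform convexity of $B^*$ with the stated modulus bound, apply the already-proven half of the theorem to $B^*$ to get property $(F_{B^*})$, and then transfer back to $B$ — here one uses that $G$ has property $(T_B)$ by Theorem \ref{thm:TtoTPhi}-type reasoning together with the fact that for reflexive $B$, property $(F_{B^*})$ plus a cohomological duality argument forces property $(F_B)$. I expect the transfer between $B$ and $B^*$ to be the second delicate point, and I would handle it by the standard observation that $H^1(G,\pi)$ and $H^1(G,\pi^*)$ vanish together for reflexive $B$ under property $(T)$, or by citing the relevant duality in \cite{MR2316269}.
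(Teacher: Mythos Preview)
Your instinct that the argument from \cite{MR2316269} part (4) should generalize once one only uses the modulus of convexity is correct, but the proposal misidentifies the mechanism and therefore has a real gap. You guess that the relevant input is a power-type bound $\delta_{L^p}(\epsilon)\ge c\,\epsilon^p$ or a midpoint-averaging inequality, and that ``every step goes through verbatim''. That is not how the argument works, and no such pointwise estimate on $\delta_B$ alone will produce a fixed point. The actual engine (both in \cite{MR2316269} and in the paper) is a \emph{displacement-halving lemma} proved by contradiction via an ultraproduct: if for every $n$ there were a space $B_n$ with $\delta_{B_n}\ge\delta_{L^{2+1/n}}$, an affine isometric action, and a point whose displacement cannot be halved within radius $n$, then the ultraproduct $B_\omega$ carries a fixed-point-free affine isometric $G$-action. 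The decisive fact---which you do not mention---is that $\delta_{B_\omega}\ge\delta_{L^2}$ forces $B_\omega$ to be a \emph{Hilbert space} (this is a classical characterization, see e.g.\ Day \cite{MR0022312}), so one contradicts property $(FH)$ directly. One then iterates the halving lemma to build a Cauchy sequence converging to a fixed point. Your proposal of importing $\epsilon(G)$ from \cite{MR2316269} as a black box and then transferring from $L^p$ to $B$ does not work as stated, because $B$ is not an $L^p$ space and there is no Mazur map available; the $\epsilon(G)$ has to be produced by rerunning the ultraproduct argument in the new generality.

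For the smoothness half, your dualization route is both more complicated than necessary and has its own gap: Lindenstrauss duality from $\rho_B\le\rho_{L^{2+\epsilon}}$ gives control on $\delta_{B^*}$ relative to $\delta_{L^{q}}$ with $q<2$, which does not match the hypothesis of the convexity half (you need the comparison exponent to exceed $2$), and the claimed simultaneous vanishing of $H^1(G,\pi)$ and $H^1(G,\pi^*)$ is not a standard fact you can simply cite. The paper instead runs the identical ultraproduct argument with $\rho$ in place of $\delta$: the ultraproduct then satisfies $\rho_{B_\omega}\le\rho_{L^2}$, which again characterizes Hilbert spaces, and the rest is the same.
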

$L^p$ spaces ($2 \le p \le 2 + \epsilon (G)$) are Orlicz spaces satisfying the assumption of Theorem \ref{thm:TtoFPhi}.

\begin{theorem}\label{thm:notFPhi}
Let $\Gamma $ be a hyperbolic group and $\mathbb{K}=\mathbb{R}$ or $\mathbb{C}$. 
Then there exists $2 \le p(\Gamma) < \infty$ such that,  
for any N-functions $\Phi$ and $\Psi$ satisfying 
\begin{itemize} 
\item $\Phi \in \Delta_2^{\mathbb{N}}$ $($this is equivalent to $\ell^\Phi(\Gamma, \mathbb{K})$ is separable$)$ and    
\item there is a constant $C>0$ and $t_0>0$ such that $\Psi (t) \le Ct^{p(\Gamma)}$ for all $0< t \le t_0$, 
\end{itemize}
the group $\Gamma $ admits a proper affine isometric action on $\ell^\Psi(\Gamma,\ell^\Phi(\Gamma, \mathbb{K}))$ with gauge norm. 
\end{theorem}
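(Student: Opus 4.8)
The plan is to transport Yu's proper affine isometric action on an $\ell^{p(\Gamma)}$-space into $\ell^\Psi(\Gamma,\ell^\Phi(\Gamma,\mathbb{K}))$. Unwinding the proof of the theorem of Yu quoted above (\cite{MR2221161}): the $\ell^p$-space occurring there may be taken to be $\ell^{p}(\Gamma\times S,\mathbb{K})$, the $\ell^p$-space of oriented edges of a Cayley graph for a fixed finite symmetric generating set $S$, so that $\Gamma\times S$ is a free $\Gamma$-set with $|S|$ orbits carrying the permutation representation $\rho$ of $\Gamma$ on the $\Gamma$-factor; the proper $1$-cocycle $b\colon\Gamma\to\ell^{p(\Gamma)}(\Gamma\times S,\mathbb{K})$ for $\rho$ is built from Mineyev's homological bicombing. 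From this description I would extract two quantitative features. \textbf{(A)} For each $g$ the vector $b(g)$ lies in $\ell^{p(\Gamma)}(\Gamma\times S,\mathbb{K})$ with uniformly bounded coordinates; in general it lies in no $\ell^q$ with $q<p(\Gamma)$ — the bicombing being only a quasi-cocycle, the correction needed to make it an honest cocycle produces a slowly-decaying tail that is exactly $\ell^{p(\Gamma)}$-summable, which is why $\Psi$, rather than an arbitrary N-function, must be constrained. (Sanity check: a finitely supported proper cocycle for $\rho$ here would, the cocycle identity being an identity of $\mathbb{K}$-valued functions, also be a proper cocycle for $\rho$ on the Hilbert space $\ell^{2}(\Gamma\times S,\mathbb{K})$, contradicting property $(FH)$ when $\Gamma$ has property $(T)$.) \textbf{(B)} The main part of $b(g)$, a quasi-geodesic $1$-chain supported in a bounded neighbourhood of a geodesic of length $|g|$, occupies at least $c|g|$ coordinates of modulus at least $\delta_0$, for fixed $c,\delta_0>0$; indeed the $R$-neighbourhood of such a geodesic meets $O(|g|)$ edges while the chain carries total mass $\ge|g|$, and the uniform bound on coordinates forces $\gtrsim|g|$ of them to be heavy.

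Now set $B:=\ell^\Psi(\Gamma,\ell^\Phi(\Gamma,\mathbb{K}))$ with the gauge norm and let $\Gamma$ act on $B$ by $(\pi(g)\eta)(x):=\eta(g^{-1}x)$ — left translation on the outer $\Gamma$, the identity on the inner factor; a change of summation index shows $\pi$ is a linear isometric representation. Fix distinct elements $(y_s)_{s\in S}$ of $\Gamma$ and let $j\colon \mathbb{K}^{S}=\ell^{p(\Gamma)}(S,\mathbb{K})\to\ell^\Phi(\Gamma,\mathbb{K})$ be $j((a_s)_s)=\sum_{s\in S}a_s\delta_{y_s}$, with $\delta_{y}$ the point mass at $y$; since $\mathbb{K}^{S}$ is finite dimensional, $j$ is a linear isomorphism onto its closed range, so there are $0<m\le M<\infty$ with $m\|a\|_{\ell^{p(\Gamma)}(S)}\le\|j(a)\|_{\ell^\Phi}\le M\|a\|_{\ell^{p(\Gamma)}(S)}$. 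Writing $b(g)(x,\cdot)\in\mathbb{K}^{S}$ for the slice of $b(g)$ over $x$, define $c\colon\Gamma\to B$ by $c(g)(x):=j\big(b(g)(x,\cdot)\big)$. Because the cocycle identity for $b$ reads slicewise as $b(gh)(x,\cdot)=b(g)(x,\cdot)+b(h)(g^{-1}x,\cdot)$ and $j$ is linear and applied slicewise, $c$ satisfies $c(gh)=c(g)+\pi(g)c(h)$; hence, once we know $c(g)\in B$ for all $g$, the assignment $g\mapsto\big(v\mapsto\pi(g)v+c(g)\big)$ is an affine isometric $\Gamma$-action on $B$.

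It remains to check $c(g)\in B$ and $\|c(g)\|_B\to\infty$. Put $w_g(x):=\|b(g)(x,\cdot)\|_{\ell^{p(\Gamma)}(S)}$, so that $m\,w_g(x)\le\|c(g)(x)\|_{\ell^\Phi}\le M\,w_g(x)$, and therefore, by monotonicity and homogeneity of the gauge norm, $\|c(g)\|_B$ is comparable, up to the fixed factors $m,M$, to the gauge norm of $w_g$ in $\ell^\Psi(\Gamma,\mathbb{R})$. For finiteness: by (A), $w_g\in\ell^{p(\Gamma)}(\Gamma,\mathbb{R})$ and $\|w_g\|_\infty<\infty$, so with $\lambda:=\|w_g\|_\infty/t_0$ and $\Psi(t)\le Ct^{p(\Gamma)}$ for $0<t\le t_0$ we get $\sum_x\Psi(w_g(x)/\lambda)\le C\lambda^{-p(\Gamma)}\|w_g\|_{p(\Gamma)}^{p(\Gamma)}<\infty$, so $\|w_g\|_{\ell^\Psi}<\infty$ and $c(g)\in B$ — this is exactly where the hypothesis $\Psi(t)\le Ct^{p(\Gamma)}$ near $0$ is used, to fit Yu's fat-tailed cocycle inside $\ell^\Psi$. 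For properness: by (B) there are at least $c'|g|$ values of $x$ (with $c'=c/|S|$) such that $w_g(x)\ge\delta_0$, so for every fixed $\lambda>0$ one has $\sum_x\Psi(w_g(x)/\lambda)\ge c'|g|\,\Psi(\delta_0/\lambda)$, which exceeds $1$ once $|g|$ is large; equivalently $\|w_g\|_{\ell^\Psi}\ge\delta_0/\Psi^{-1}\big(1/(c'|g|)\big)\to\infty$. Hence $\|c(g)\|_B\to\infty$ and the action is metrically proper. (The hypothesis $\Phi\in\Delta_2^{\mathbb{N}}$, i.e.\ separability of $\ell^\Phi(\Gamma,\mathbb{K})$, is inherited from the statement; the inner Orlicz space serves only as a Banach space large enough to contain an isomorphic copy of the finite-dimensional coefficient space $\mathbb{K}^{S}$ of Yu's cocycle.)

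The step I expect to be the real obstacle is the first one: extracting features (A) and (B) from Yu's construction — identifying the $\Gamma$-set and coefficient space precisely, establishing the uniform bound on the coordinates of $b(g)$ together with the linear-in-$|g|$ lower bound on the number of heavy coordinates produced by Mineyev's bicombing, and confirming that the genuine (corrected) cocycle still has $\ge c|g|$ such heavy coordinates after the correction term is added. Everything downstream — the transfer along $j$, the verification of the cocycle identity, and the two gauge-norm estimates — is routine once those structural facts are in hand.
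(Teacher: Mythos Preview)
Your strategy is sound and the downstream estimates are correct, but the route is genuinely different from the paper's. The paper does not transfer Yu's $\ell^p$-cocycle; it redoes the construction directly in the Orlicz setting. It normalizes Mineyev's convex-combination map $f$ in the $\Phi$-gauge norm, $h(b,a)=f(b,a)/\|f(b,a)\|_{(\Phi)}$, and then compares $\|\cdot\|_{(\Phi)}$ with $\|\cdot\|_1$ on the fixed finite support $B(b,10\delta)$ (via $\Phi^{-1}$, $(\Phi^*)^{-1}$ and the Orlicz H\"older inequality) to recover the exponential estimate $\|h(b,a)-h(b,a')\|_{(\Phi)}\le C\lambda^{(a|a')_b}$; the linear part is the \emph{diagonal} action $(\pi(g)\xi)(\gamma)=g\cdot\xi(g^{-1}\gamma)$ on both factors, and properness comes from the disjoint-support observation along the bicombing path, exactly your feature~(B). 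Your approach instead black-boxes Yu's cocycle and pushes it through a fixed linear embedding $j:\mathbb{K}^{S}\hookrightarrow\ell^\Phi(\Gamma)$ with only the outer $\Gamma$-action; in your proof the inner $\ell^\Phi(\Gamma)$ is nothing more than a container for a finite-dimensional space, and indeed the hypothesis $\Phi\in\Delta_2^{\mathbb N}$ plays no role. One correction to your unwinding: Yu's cocycle does not literally live on edges $\Gamma\times S$ with $S$ the generating set --- the values $h(\gamma,\cdot)$ are $0$-chains supported in balls $B(\gamma,11\delta)$, and the representation acts on both factors. However, the shear $(\gamma,x)\mapsto(\gamma,\gamma^{-1}x)$ is an isometry of $\ell^p(\Gamma\times\Gamma)$ intertwining the diagonal action with the outer-only action and sending the support into the fixed finite set $S'=B(e,11\delta)$; with $S'$ in place of $S$ your argument goes through verbatim, and your features (A) and (B) are exactly what the paper establishes for $h$ (bounded coefficients from the convex-combination property and the norm comparison; $\gtrsim |g|$ heavy slices from disjoint supports along $q[g,e]$).
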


For example, $\ell^p=\ell^p(\Gamma \times \Gamma)$ with $p \ge p(G)$ are Orlicz spaces satisfying the assumption of Theorem \ref{thm:notFPhi}. 
\begin{acknowledgements}
We would like to thank T. Yokota for asking the question and encouraging us. 
\end{acknowledgements}

\section{Orlicz spaces} 
\label{sec:Orlicz}

This section refers to \cite{MR1113700} and \cite{MR1957004}.  

A function $\Phi :[0,+\infty) \to [0,+\infty]$ is said to be a {\it Young function} if it is 
\begin{enumerate}
\item convex, i.e., $\Phi(st_1+(1-s)t_2) \le s\Phi(t_1) + (1-s)\Phi(t_2)$ for all $t_1,t_2\in [0,+\infty)$ and $s\in [0,1]$;
\item $\Phi (0)=0$; 
\item $\lim_{t\to \infty} \Phi (t) = +\infty$.  
\end{enumerate}
The function ${\Phi^*} :[0,+\infty) \to [0,+\infty]$ defined by 
${\Phi^*}(s):=\sup \{st-\Phi(t) : t\ge 0\}$
is called the {\it complementary function} of $\Phi$, which is also a Young function. 
A Young function $\Phi$ is called an {\it N-function} if it is a Young function satisfying $0<\Phi(t)<\infty$ for all $t\in (0,\infty)$, and $\lim_{t\to 0}\frac{\Phi (t)}{t}=0$, $\lim_{t\to \infty }\frac{\Phi (t)}{t} = 	\infty$. 
The complementary function of an N-function is also an N-function. 
For example, the function $\Phi_p(t)= \frac{t ^p}{p}$ ($1<p<\infty$) is an N-function, and the complementary function is $\Phi_p^*(s)=\frac{s ^q}{q}$, where $\frac{1}{p}+\frac{1}{q}=1$. 
The functions 
$$\Phi_1(t) =|t| \ \ \ \ \text{ and } \ \ \ \ \Phi_\infty (t) = \begin{cases}
    0 & (t \in [-1,1]) \\
    +\infty & (\text{otherwise})
  \end{cases}$$ 
are Young functions, but they are not N-function.

Let $\Omega$ be a $\sigma $-finite measure space with a positive measure $\mu$, and $\mathbb{K}=\mathbb{R}$ or $\mathbb{C}$. 

\begin{definition}
For a Young function $\Phi$, the space 
$$ L^\Phi (\Omega,\mathbb{K}) := \left\{ f:\Omega \to \mathbb{K} \mid \text{measurable},\  \int_\Omega \Phi (a|f|) d\mu <\infty \text{ for some  }a >0\right\}/\sim $$
is called an {\it Orlicz space}, where $f\sim g$ means $f=g$ $\mu$-a.e.. 
For $f\in L^\Phi$, we define 
$$\|f\|_{(\Phi)} := \inf \left\{ b>0: \int_\Omega \Phi \left(\frac{|f|}{b}\right) d\mu \le 1\right\} .  $$
The $\|\ \|_{(\Phi)}$ is a norm on $L^\Phi$, which is called the {\it gauge norm} (or the {\it Luxemburg-Nakano norm}). 
For $f\in L^\Phi$, we define 
$$\|f \|_\Phi := \sup \left\{ \int_\Omega |f\psi| d\mu : \int_\Omega \Phi^* \left(|\psi|\right) d\mu \le 1 \right\}. $$
The $\| \cdot \|_\Phi$ is also a norm on $ L^\Phi$, which is called the {\it Orlicz norm}. 
The norm spaces $(L^{\Phi}, \|\ \|_{(\Phi)})$ and $(L^{\Phi}, \|\cdot \|_\Phi)$ are Banach spaces. 
\end{definition}
Since $(L^{\Phi_p}, \|\ \|_{(\Phi_p)})=(L^p, \|\ \|_{L^p})$ for $1\le p\le \infty$, an Orlicz spaces with gauge norm are generalization of $L^p$ spaces. 
Gauge norm and Orlicz norm have the relation  
\begin{equation} \label{eq:gaugeorlicz}
\| f \|_{(\Phi)} \le \| f \|_\Phi \le 2 \| f\|_{(\Phi)}
\end{equation} 
for any $f\in L^\Phi$. 

We consider the following conditions for $\Omega=[0,1]$ with the Lebesgue measure $\mu$, and $\Omega=\mathbb{N}$ with the counting measure $\mu$. 
A Young function $\Phi $ is said to satisfy the {\it $\Delta_2^{\Omega}$-condition} and denoted as $\Phi \in \Delta_2^{\Omega}$
if there are $K>0$ and $t_0 > 0$ such that 
$$ \Phi(2t) \le K\Phi(t) \ \ \ \text{ for all }\ t\ge t_0  \text{ if } \Omega =[0,1]\ \  ( \text{for all }\ 0<t\le t_0 \text{ if } \Omega =\mathbb{N}).  $$
A Young function $\Phi $ is said to satisfy the {\it $\nabla_2^\Omega$-condition}  and denoted as $\Phi \in \nabla_2^{\Omega}$ 
if there are $c > 1$ and $t_0 > 0$ such that 
$$ 2c \Phi(t) \le \Phi(c t) \ \ \ \text{ for all }\  t\ge t_0 \text{ if } \Omega =[0,1]\ \  (\text{for all }\ 0<t\le t_0 \text{ if } \Omega =\mathbb{N}).  $$
For example, $\Phi_p \in \Delta_2^\Omega$ for $1\le p<\infty$ and $\Phi_p \in \nabla_2^\Omega $ for $1 < p<\infty$. 
If $\Phi \in \Delta_2^\Omega$, then the simple functions on $\Omega$ are dense in $L^\Phi$, and  
$$\int_{\Omega} \Phi\left( \frac{f}{\|f\|_{(\Phi)}}\right)d\mu=1 $$ for $f \in L^\Phi$ with $f\not=0$. 
For an N-function $\Phi$, $\Phi \in \Delta_2^\Omega \cap \nabla_2^\Omega$ if and only if $L^\Phi(\Omega, \mathbb{K})$ is reflexive. 
Note that the uniform continuity and uniform smoothness of $L^\Phi(\Omega, \mathbb{R})$ can be written by conditions for $\Phi$ and $\Phi^*$, which are strictly stronger than $\Phi \in \Delta_2^\Omega \cap \nabla_2^\Omega$. 
\begin{theorem}[\cite{MR1890178}]
For a N-function $\Phi$ and $0 < s \le 1$, let $\Phi_{(s)} $ be the inverse function of 
$ \Phi_{(s)} ^{-1}(t) = (\Phi^{-1}(t))^{1-s}t^{\frac{s}{2}} $. 
Then $\Phi_{(s)} $ is also an N-function and 
for $0<\epsilon \le 2$ 
$$ \delta_{L^{\Phi_{(s)}}(\Omega, \mathbb{R})}(\epsilon) \ge 1 - \left(1-\left(\frac{\epsilon}{2}\right)^{\frac{2}{s}} \right)^{\frac{s}{2}} = \delta_{L^{\frac{2}{s}}(\Omega, \mathbb{R})}(\epsilon) . $$
\end{theorem}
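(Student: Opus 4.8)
The plan is to treat the two assertions in the theorem in turn: first that $\Phi_{(s)}$ is again an $N$-function, then the modulus-of-convexity estimate, which is the substantive part. For the first, I would set $h:=\Phi_{(s)}^{-1}$, so $h(t)=(\Phi^{-1}(t))^{1-s}t^{s/2}$, and verify that $h$ is a continuous, strictly increasing, concave bijection of $[0,\infty)$ with $h(0)=0$, $h(t)/t\to+\infty$ as $t\to0^{+}$ and $h(t)/t\to0$ as $t\to\infty$; then its inverse $\Phi_{(s)}$ is by definition an $N$-function. Concavity of $h$ is the only point needing an argument: the map $(u,v)\mapsto u^{1-s}v^{s/2}$ is non-decreasing in each variable and concave on $[0,\infty)^{2}$ (the exponents are non-negative and $(1-s)+s/2\le1$), and composing a concave, coordinatewise non-decreasing function with the pair of concave maps $t\mapsto\Phi^{-1}(t)$ and $t\mapsto t$ yields a concave function. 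The two limits come from $h(t)/t=(\Phi^{-1}(t)/t)^{1-s}t^{-s/2}$ together with the corresponding limits of $\Phi^{-1}(t)/t$, which are exactly the $N$-function conditions on $\Phi$ read off the inverse; finiteness, $\Phi_{(s)}(0)=0$ and $\Phi_{(s)}(t)>0$ for $t>0$ are automatic.

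For the modulus estimate I would first reduce it to a single scalar-integral inequality. Put $p:=2/s\in[2,\infty)$ and $\Phi_{2}(t):=t^{2}$, so that $\Phi_{(s)}^{-1}=(\Phi^{-1})^{1-s}(\Phi_{2}^{-1})^{s}$; this presents $L^{\Phi_{(s)}}(\Omega,\mathbb R)$ as the Calderón–Lozanovskii product $(L^{\Phi})^{1-s}(L^{2})^{s}$, and the only feature I need is the attendant Hölder-type inequality for gauge norms: if $|u|\le c^{1-s}d^{s}$ $\mu$-a.e.\ with $c\in L^{\Phi}$ and $d\in L^{2}$, then $\|u\|_{(\Phi_{(s)})}\le\|c\|_{(\Phi)}^{1-s}\|d\|_{L^{2}}^{s}$. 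Let $f,g$ satisfy $\|f\|_{(\Phi_{(s)})}=\|g\|_{(\Phi_{(s)})}=1$ and $\|f-g\|_{(\Phi_{(s)})}\ge\epsilon$; I must bound $\bigl\|\tfrac{f+g}{2}\bigr\|_{(\Phi_{(s)})}$ by $(1-(\epsilon/2)^{2/s})^{s/2}$. Since $\int_{\Omega}\Phi_{(s)}(|f|)\,d\mu\le1$ and likewise for $g$, put $P:=\Phi^{-1}(\Phi_{(s)}(|f|))$, $Q:=\Phi^{-1}(\Phi_{(s)}(|g|))$ and $c:=\tfrac{P+Q}{2}$; then $\int_{\Omega}\Phi(P)\,d\mu\le1$, $\int_{\Omega}\Phi(Q)\,d\mu\le1$, so $\|c\|_{(\Phi)}\le1$ by convexity of the gauge norm, and $|f|=P^{1-s}\Phi_{(s)}(|f|)^{s/2}$ (similarly for $g$). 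Choosing $d\ge0$ with $c^{1-s}d^{s}=\bigl|\tfrac{f+g}{2}\bigr|$, the Hölder-type inequality gives $\bigl\|\tfrac{f+g}{2}\bigr\|_{(\Phi_{(s)})}\le\|d\|_{L^{2}}^{s}$, while $\|d\|_{L^{2}}^{2}=2^{-p}\int_{\Omega}|f+g|^{p}c^{\,2-p}\,d\mu$ (using $2/s=p$). Thus the whole estimate reduces to the weighted Clarkson-type inequality
\[
  \int_{\Omega}\frac{|f+g|^{p}}{c^{\,p-2}}\,d\mu\ \le\ 2^{p}-\epsilon^{p},\qquad c=\frac{\Phi^{-1}(\Phi_{(s)}(|f|))+\Phi^{-1}(\Phi_{(s)}(|g|))}{2}.
\]

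Establishing this last inequality is where I expect the real work to lie. The triangle inequality $|f+g|\le|f|+|g|$ together with concavity of $(u,v)\mapsto u^{1-s}v^{s/2}$ (applied to $(P,\Phi_{(s)}(|f|))$ and $(Q,\Phi_{(s)}(|g|))$) yields the baseline pointwise bound $|f+g|^{p}\le2^{p-1}c^{\,p-2}(\Phi_{(s)}(|f|)+\Phi_{(s)}(|g|))$, which integrates to exactly $2^{p}$; the entire difficulty is to recover the extra $-\epsilon^{p}$, i.e.\ to show that the integrated "defect" $D:=\int_{\Omega}\bigl[2^{p-1}(\Phi_{(s)}(|f|)+\Phi_{(s)}(|g|))-|f+g|^{p}c^{\,2-p}\bigr]\,d\mu$ is at least $\epsilon^{p}$, knowing that $\int_{\Omega}\Phi_{(s)}(|f-g|/\epsilon)\,d\mu\ge1$ (which follows from $\|f-g\|_{(\Phi_{(s)})}\ge\epsilon$ and continuity of $\Phi_{(s)}$). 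At each point the defect has two sources: the non-parallelism of $f(x)$ and $g(x)$, which I would control via the scalar identity $|f+g|^{2}+|f-g|^{2}=2|f|^{2}+2|g|^{2}$ and the elementary inequality $(x-y)^{p/2}\le x^{p/2}-y^{p/2}$ valid for $0\le y\le x$, $p\ge2$; and the discrepancy between $\Phi_{(s)}(|f(x)|)$ and $\Phi_{(s)}(|g(x)|)$, which is governed by the strict concavity of $t\mapsto\Phi^{-1}(t)/t$. Weighting and combining these two contributions — using the normalizations $\int\Phi_{(s)}(|f|)=\int\Phi_{(s)}(|g|)=1$ — so as to dominate $\epsilon^{p}\int\Phi_{(s)}(|f-g|/\epsilon)$ is the technical heart; for $\Phi(t)=t^{r}$ it is just a weighted Clarkson inequality for $L^{q}$ with $q$ between $r$ and $2$, while in general it is precisely where the exponent $p=2/s$ and the convexity of $\Phi_{(s)}$ are used. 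Granting it, $\|d\|_{L^{2}}^{2}\le1-(\epsilon/2)^{p}$, hence $\bigl\|\tfrac{f+g}{2}\bigr\|_{(\Phi_{(s)})}\le(1-(\epsilon/2)^{p})^{s/2}=(1-(\epsilon/2)^{2/s})^{s/2}$, and taking the infimum over all such $f,g$ gives $\delta_{L^{\Phi_{(s)}}(\Omega,\mathbb R)}(\epsilon)\ge1-(1-(\epsilon/2)^{2/s})^{s/2}=\delta_{L^{2/s}(\Omega,\mathbb R)}(\epsilon)$, as claimed.
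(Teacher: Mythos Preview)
The paper does not prove this theorem; it merely quotes it from Rao--Ren \cite{MR1890178} and uses it as a black box to exhibit examples of Orlicz spaces satisfying the hypothesis of Theorem~\ref{thm:TtoFPhi}. So there is no ``paper's own proof'' to compare against, and your proposal must be judged on its own merits.

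Your framework is the right one: recognising $\Phi_{(s)}^{-1}=(\Phi^{-1})^{1-s}(\Phi_2^{-1})^{s}$ as a Calder\'on--Lozanovskii product and using the attendant H\"older-type inequality to transfer the problem into $L^2$ is exactly how results of this kind are proved in the literature, and your verification that $\Phi_{(s)}$ is an $N$-function is clean. The reduction to the weighted scalar inequality
\[
\int_{\Omega}\frac{|f+g|^{p}}{c^{\,p-2}}\,d\mu\ \le\ 2^{p}-\epsilon^{p}
\]
is also correctly derived.

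However, the proposal has a genuine gap precisely where you flag it: you do not prove this inequality, and the sketch you give (``two sources of defect, combine them using the normalisations'') is not an argument. The difficulty is real. Your baseline pointwise bound integrates to at most $2^{p}$ (not ``exactly'' $2^{p}$, since $\int\Phi_{(s)}(|f|)\le 1$ need not be an equality without a $\Delta_2$ assumption), and recovering the full $-\epsilon^{p}$ from the defect requires a sharp pointwise inequality relating $|f+g|^{p}c^{2-p}$, $|f-g|$, $\Phi_{(s)}(|f|)$ and $\Phi_{(s)}(|g|)$ that you have not stated, let alone proved. The parallelogram identity and the inequality $(x-y)^{p/2}\le x^{p/2}-y^{p/2}$ that you invoke do not by themselves yield anything about $\Phi_{(s)}(|f-g|/\epsilon)$, which is what must appear in the lower bound; bridging from $|f-g|$ to $\Phi_{(s)}(|f-g|/\epsilon)$ through the weight $c^{2-p}$ is the whole point, and your outline does not indicate how. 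In the Rao--Ren proof this step goes through a careful pointwise analysis specific to the product structure $|f|=P^{1-s}\Phi_{(s)}(|f|)^{s/2}$; as written, your proposal stops short of it.
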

The function $\Phi_{(s)}$ is an N-function between $\Phi_{(0)} = \Phi $ and $ \Phi_{(1)} = \Phi_2$ in some sense. 
For example, let $p(s) = \frac{1}{(1-s)\frac{1}{p}+s\frac{1}{2}}$ for $1\le p<\infty$, then 
$$\Phi_{p,(s)} = \frac{t^{p(s)}}{p^{\frac{1-s}{p}p(s)}}= \frac{p(s)}{p^{\frac{1-s}{p}p(s)}} \Phi_{p(s)}. $$
Hence we can easily construct uniformly convex Orlicz spaces satisfying the assumption of Theorem \ref{thm:TtoFPhi}.

\section{Proof of Theorem \ref{thm:TPhitoT}}

\begin{theoremTPhitoT}
Let $G$ be a locally compact second countable group,  
$\Phi $ a Young function with $0< \Phi(t) < \infty$ for all $t>0$, 
and $\mathbb{K}=\mathbb{R}$ or $\mathbb{C}$. 
If $G$ has property $(T_{L^\Phi([0,1], \mathbb{K})})$, then it has Kazhdan's property $(T)$. 
\end{theoremTPhitoT}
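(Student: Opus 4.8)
The plan is to show the contrapositive in spirit: assuming $G$ has property $(T_{L^\Phi([0,1],\mathbb{K})})$, I will produce a bound forcing any unitary representation of $G$ without invariant vectors to be bounded away from having almost invariant vectors. The standard route, following the $L^p$ case in \cite{MR2316269}, is to embed a given unitary $G$-representation $(\pi, H)$ isometrically and $G$-equivariantly into a linear isometric $G$-representation on an Orlicz space built from $H$, and then transfer the almost-invariant-vector hypothesis back and forth. Concretely, first I would reduce to the case where $H$ is separable (using second countability of $G$, every unitary representation is a direct integral / direct sum of cyclic ones, and one may restrict attention to a single cyclic subrepresentation without invariant vectors), and indeed to a single cyclic vector, so $H$ may be taken to be $L^2([0,1],\mathbb{K})$ with $G$ acting by some measure-preserving-type cocycle, or more robustly $H \hookrightarrow L^\Phi$ via a Gaussian-type or Mazur-map construction.

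The key technical device is a nonlinear map relating the unit sphere of $H = L^2([0,1])$ to that of $L^\Phi([0,1])$ that is equivariant for isometric actions and quantitatively controls distances. For $L^p$ one uses the Mazur map $M_{2,p}(f) = |f|^{2/p}\sign f$, which is a uniformly continuous equivariant homeomorphism between unit spheres with modulus of continuity depending only on $p$. The Orlicz analogue: define $M_\Phi : S(L^2) \to S(L^\Phi)$ roughly by $M_\Phi(f) = \Phi^{-1}(|f|^2)\cdot(\text{phase of }f)$, suitably normalized so that $\int \Phi(|M_\Phi(f)|)\,d\mu = \int |f|^2\,d\mu = 1$; this is well-defined because $0<\Phi(t)<\infty$ for $t>0$ makes $\Phi$ a strictly increasing bijection of $[0,\infty)$. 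One then checks that $M_\Phi$ intertwines the Koopman-type representation on $L^2$ (coming from $\pi$ realized on $L^2[0,1]$) with a linear isometric representation on $L^\Phi[0,1]$, and that $M_\Phi$ together with its inverse is uniformly continuous with modulus depending only on $\Phi$. Granting this, if $\pi$ almost had invariant vectors, pushing unit vectors through $M_\Phi$ would give almost invariant unit vectors in $L^\Phi$; since $\pi$ has no invariant vectors one argues $L^\Phi$ has no $G$-fixed vectors in the relevant subspace (fixed vectors would pull back through $M_\Phi^{-1}$), contradicting property $(T_{L^\Phi})$. Hence $\pi$ does not almost have invariant vectors, i.e.\ $G$ has property $(T)$.

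The main obstacle, and where the argument needs genuine care, is two-fold. First, realizing an arbitrary unitary $G$-representation on the fixed measure space $L^2([0,1])$ with a linear isometric action: this is exactly why the statement fixes $\Omega = [0,1]$ — one uses that $G$ second countable acts on a standard probability space, or more precisely that any separable Hilbert space is isometrically $L^2$ of $[0,1]$ and the unitary group of $L^2[0,1]$ contains enough structure; the cleanest path may be to use a Gaussian functor sending the orthogonal representation to a measure-preserving action on a probability space and then to $L^\Phi$ of that space, invoking a measure-space isomorphism with $[0,1]$. Second, and more delicate for general Young functions without a $\Delta_2$ assumption: proving the uniform continuity of the Mazur-type map $M_\Phi$ and its inverse purely from convexity, $\Phi(0)=0$, $\Phi \to \infty$, and $0<\Phi(t)<\infty$. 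Without $\Delta_2$ the Orlicz space may be non-separable and the gauge functional badly behaved, so one must work directly with the modular $\int \Phi(|\cdot|)\,d\mu$ and Jensen/convexity estimates rather than with the norm; establishing a uniform modulus of continuity for $f \mapsto \Phi^{-1}(|f|^2)$ on the modular unit sphere is the heart of the proof. I expect this modular estimate — showing $\|M_\Phi(f) - M_\Phi(g)\|_{(\Phi)}$ small whenever $\|f-g\|_{L^2}$ small, uniformly — to be the step requiring the most work, handled by splitting into the region where $|f|,|g|$ are comparable and the region where one dominates, and using convexity of $\Phi$ to control $\Phi^{-1}$ of sums.
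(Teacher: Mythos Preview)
Your proposal takes a substantially harder route than necessary and contains a genuine gap. The central device---pushing an arbitrary unitary representation on $L^2([0,1])$ through a Mazur-type map $f\mapsto \Phi^{-1}(|f|^2)\sign(f)$ to obtain a \emph{linear} isometric representation on $L^\Phi([0,1])$---does not work as stated: conjugating a unitary $U$ on $L^2$ by this nonlinear map yields a linear operator on $L^\Phi$ only when $U$ already has the weighted-composition form $Uf(x)=h(x)f(Tx)$, and a generic unitary does not. This is precisely why the paper deploys the Mazur map only in the \emph{converse} direction (Theorem~\ref{thm:TtoTPhi}), where a structure theorem for surjective isometries of $L^\Phi$ guarantees that form. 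Your fallback to the Gaussian functor is closer to the mark, but you then still propose to transfer almost-invariant vectors via the Mazur map, whose uniform continuity you rightly flag as unproven without a $\Delta_2$-type hypothesis; indeed the paper establishes H\"older continuity only under $\Phi\in\Delta_2\cap\nabla_2$, which is \emph{not} assumed in this theorem.

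The paper's actual proof avoids all of this. It invokes Connes--Weiss: if $G$ fails property $(T)$, there is an ergodic measure-preserving $G$-action on a standard non-atomic probability space $(\Omega,\mu)\cong[0,1]$ admitting an asymptotically invariant sequence of sets $E_n$ with $\mu(E_n)=\tfrac12$ and $\mu(gE_n\triangle E_n)\to 0$ uniformly on compacta. One takes the Koopman representation $\rho$ on $L^\Phi(\Omega,\mathbb{K})$; ergodicity gives $B^{\rho(G)}=\mathbb{K}\chi_\Omega$, and the classes of $2\chi_{E_n}-\chi_\Omega$ in the quotient are shown, by a direct elementary computation with the gauge norm, to have norm bounded below by $1/\Phi^{-1}(2)$ and displacement bounded above by $2/\Phi^{-1}\bigl(1/\mu(gE_n\triangle E_n)\bigr)\to 0$. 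The only property of $\Phi$ used is that it is a strictly increasing continuous bijection of $[0,\infty)$, which follows from convexity together with $0<\Phi(t)<\infty$; no Mazur map and no modulus-of-continuity estimate enter this direction at all.
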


\begin{proof}
Assume that $G$ does not have Kazhdan's property $(T)$. 
Connes and Weiss \cite{MR599455} construct a measure-preserving, ergodic $G$-action on a standard non-atomic probability space $(\Omega, \mu)$ which admits an asymptotically invariant measurable subsets $\{E_n\}_{n=1}^\infty$ such that 
\begin{eqnarray*}
\mu (E_n) = \frac{1}{2} \text{\ \ \  and \ \ } \mu (g E_n \triangle E_n)\to 0 \text{\ \ for all \ } g \in G. 
\end{eqnarray*}
As in 4.c in \cite{MR2316269}, we can take this $\{E_n\}_{n=1}^\infty$ such as the convergence $\mu (g E_n \triangle E_n)\to 0 $ is uniform on compact subsets of $G$.
Consider the linear isometric $G$-representation $\rho$ on $B=L^\Phi(\Omega, \mathbb{K}) $ defined by $\rho(g )f(x)=f(g^{-1}x)$. 
Then $B^{\rho(G)}= \mathbb{K}\chi_\Omega$, the constant functions on $\Omega$. 
Let $\tilde{B} = B/B^{\rho(G)}$. 
From the assumption, $\Phi$ has the inverse $\Phi^{-1}:[0,\infty) \to [0,\infty)$.  
Hence for $\tilde{f}_n = 2 \chi_{E_n} -\chi_\Omega  + \mathbb{K}\chi_\Omega \in \tilde{B}, $
we have 
\begin{eqnarray*}
\| \tilde{f}_n \|_{\tilde{B}} 
&=&  \inf_{a \in \mathbb{K}} \|2 \chi_{E_n} -\chi_\Omega  +  a\chi_\Omega \|_{(\Phi)} \\ 
&=&  \inf_{a \in \mathbb{K}} \inf \left\{ b>0 \mid \int_\Omega \Phi \left(\frac{|2 \chi_{E_n} -\chi_\Omega + a\chi_\Omega|}{b}\right) d\mu \le 1 \right\}\\
&=&  \inf_{a \in \mathbb{K}} \inf \left\{ b>0 \mid \int_{E_n} \Phi \left(\frac{|1+a|}{b}\right) d\mu + \int_{\Omega -E_n} \Phi \left(\frac{| a-1|}{b}\right) d\mu \le 1 \right\}\\
&=&  \inf_{a \in \mathbb{K}} \inf \left\{ b>0 \mid \frac{1}{2}\Phi \left(\frac{|1+a|}{b}\right) + \frac{1}{2} \Phi \left(\frac{|1- a|}{b}\right) \le 1 \right\}\\
&\ge &  \inf \left\{ b>0 \mid \Phi \left(\frac{1}{b}\right) \le 2 \right\}\\
&=& \frac{1}{\Phi^{-1}(2)} > 0. 
\end{eqnarray*}
and 
\begin{eqnarray*}
\| \tilde{\rho}(g) \tilde{f}_n - \tilde{f}_n\|_{\tilde{B}}  
&=& \inf_{a\in \mathbb{K}} \| \rho(g) (2 \chi_{E_n} -\chi_\Omega) - (2 \chi_{E_n} -\chi_\Omega) + a \chi_{\Omega} \|_{(\Phi)} \\ 
&\le & \| \rho(g) (2 \chi_{E_n} -\chi_\Omega) - (2 \chi_{E_n} -\chi_\Omega)\|_{(\Phi)}  \\ 
&=& 2\| \chi_{gE_n \triangle E_n} \|_{(\Phi)}  \\ 
&=& 2 \inf \left\{ b>0 \mid \int_{\Omega} \Phi \left(\frac{\chi_{gE_n \triangle E_n}}{b}\right) d\mu \le 1 \right\}  \\ 
&=& 2 \inf \left\{ b>0 \mid \Phi \left(\frac{1}{b}\right) \mu(gE_n \triangle E_n) \le 1 \right\}  \\ 
&=& \frac{2}{\Phi^{-1}(\frac{1}{\mu(gE_n \triangle E_n)})} . 
\end{eqnarray*}
Define $\tilde{f}_n' = \frac{\tilde{f}_n} {\|\tilde{f}_n\|_{\tilde{B}}}$. 
Since $\Phi^{-1}(t)\to \infty$ as $t\to \infty$, we have 
\begin{eqnarray*}
\|\tilde{\rho}(g) \tilde{f}_n' - \tilde{f}_n'\|_{\tilde{B}}  
\le \frac{\| \tilde{\rho}(g) \tilde{f}_n - \tilde{f}_n\|_{\tilde{B}}  
}{\|\tilde{f}_n \|_{\tilde{B}} }
\le \frac{2\Phi^{-1}(2)}{\Phi^{-1}(\frac{1}{\mu(gE_n \triangle E_n)})} 
\to 0 
\end{eqnarray*}
as $n \to \infty$ uniformly on compact subsets of $G$.
This means $\tilde{\rho}$ almost has the invariant vectors $\{\tilde{f}_n'\}_{n=1}^\infty$. 
Hence $G$ does not have property $(T_{L^\Phi([0,1],\mathbb{K})})$ for gauge norm. 
Using (\ref{eq:gaugeorlicz}), we can prove for Orlicz norm. 
\end{proof}

\section{Generalized Mazur map}

Delpech proved the H\"older continuity of a generalized Macer map on the unit sphere of real reflexive Orlicz spaces in \cite{MR2157375}. 
In this section, we see the H\"older continuity of a generalized Mazur map around the unit sphere of real or complex reflexive Orlicz spaces.  

Let $\Omega =[0,1]$ or $ \mathbb{N}$, and $\mathbb{K} = \mathbb{R} $ or $\mathbb{C} $, 
and denote by $L^\Phi $ as $L^\Phi (\Omega,\mathbb{K})$.  

\begin{definition}
Let $\Phi, \Psi $ be two N-functions. 
The map 
$$\phi_{\Phi\Psi} : L^{\Phi} \to L^{\Psi} ; 
f \mapsto \phi_{\Phi\Psi}(f):=\Psi^{-1} \circ \Phi(|f|) \sign (f)  $$
is called the {\it generalised Mazur map},  where $\sign (f)(x) := f(x)/|f(x)|$ for $x\in \Omega $ with $f(x) \not=0$. 
\end{definition}
Note that if $\Phi, \Psi  \in \Delta_2^\Omega$, 
then $ \phi_{\Phi\Psi} $ is a bijection between the unit sphere $S_\Phi$ of $L^{\Phi}$ and the unit sphere $S_\Psi$ of $L^{\Psi}$. 

\begin{theorem}\label{theorem:unifhomeo}
Let $\Phi$ and $\Psi$ be N-functions with $\Phi,\Psi \in \Delta_2^\Omega \cap \nabla_2^\Omega$. 
Then the generalized Mazur map $\phi_{\Phi\Psi}:A_\Phi \to L^{\Psi}$ is a $1\wedge \alpha$-H\"older map for some $0<\alpha<\infty$, where $A_\Phi=\{f\in L^{\Phi} \mid \frac{1}{2}\le \|f\|_{(\Phi)} \le \frac{3}{2}\}$.
\end{theorem}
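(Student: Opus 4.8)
The plan is to adapt the classical argument for the Mazur map between $L^{p}$-spheres, with the exponents replaced by the Matuszewska--Orlicz indices of $\Phi$ and $\Psi$. First I would record what $\Phi,\Psi\in\Delta_2^\Omega\cap\nabla_2^\Omega$ gives: on the range of values relevant to $A_\Phi$ there are exponents $1<p_\Phi\le q_\Phi<\infty$, $1<p_\Psi\le q_\Psi<\infty$ with
\[ \lambda^{q_\Phi}\Phi(t)\le\Phi(\lambda t)\le\lambda^{p_\Phi}\Phi(t)\qquad(0<\lambda\le1), \]
the reversed inequalities for $\lambda\ge1$, and the analogous bounds for $\Psi$, for $\Phi^{-1},\Psi^{-1}$, and --- using the identity $\Phi(r)=\Psi(m(r))$ for $m:=\Psi^{-1}\circ\Phi$ --- the two-sided power bound $\lambda^{q_\Phi/p_\Psi}\le m(\lambda r)/m(r)\le\lambda^{p_\Phi/q_\Psi}$ for $0<\lambda\le1$. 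For $\Omega=\mathbb{N}$ the relevant range is bounded, because $f\in A_\Phi$ forces $\sum_n\Phi(|f(n)|)\le C$ and hence $|f(n)|\le\Phi^{-1}(C)$; for $\Omega=[0,1]$ one uses the $\Delta_2^{[0,1]}/\nabla_2^{[0,1]}$ conditions to control large values, the modular bound $\int_\Omega\Phi(|f|)\,d\mu\le C$ on $A_\Phi$ tying the contribution of the remaining values to $\int_\Omega\Phi(|f-g|)\,d\mu$.

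The heart of the argument is a single pointwise inequality for the scalar map $z\mapsto m(|z|)\sign z$: there should exist $\theta\in(0,1)$ and $C<\infty$ with
\[ \Psi\bigl(\,\bigl|\,m(|z|)\sign z-m(|w|)\sign w\,\bigr|\,\bigr)\le C\,\Phi(|z-w|)^{\theta}\bigl(\Phi(|z|)+\Phi(|w|)\bigr)^{1-\theta} \]
for all $z,w\in\mathbb{K}$ in the relevant range. I would prove this by splitting into two regimes. In the ``phase-flip'' regime $|z-w|\ge\delta\max(|z|,|w|)$ (in the real case this is exactly $\sign z\neq\sign w$) one uses $|m(|z|)\sign z-m(|w|)\sign w|\le 2m(\max(|z|,|w|))$, the $\Delta_2$-estimate $\Psi(2u)\le C\Psi(u)$, the identity $\Psi(m(r))=\Phi(r)$, and the $\nabla_2$-estimate $\Phi(\delta r)\ge c\,\Phi(r)$ to conclude $\Phi(\max)\le C'\Phi(|z-w|)$; since also $\Phi(|z-w|)\le C''(\Phi(|z|)+\Phi(|w|))$ the inequality follows for any $\theta$. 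In the ``small perturbation'' regime $|z-w|<\delta\max(|z|,|w|)$ one has $|z|\approx|w|$, writes the difference as $|m(|z|)-m(|w|)|+m(|w|)\,|\sign z-\sign w|$, bounds the amplitude term by $\bigl(|z-w|/\max\bigr)m(\max)$ via local absolute continuity and the regular-variation bound on $m$, bounds the phase term by $2m(\max)\,|z-w|/\max$ via $|\sign z-\sign w|\le 2|z-w|/\max$, and then applies the lower-index estimate $\Psi(\lambda u)\le\lambda^{p_\Psi}\Psi(u)$ ($\lambda\le1$) together with $|z-w|/\max\le(\Phi(|z-w|)/\Phi(\max))^{1/q_\Phi}$ to land on $\Phi(|z-w|)^{p_\Psi/q_\Phi}\Phi(\max)^{1-p_\Psi/q_\Phi}$; take $\theta=\min(\tfrac12,p_\Psi/q_\Phi)$, interpolating with the trivial bound $\Psi(|{\cdot}|)\le C(\Phi(|z|)+\Phi(|w|))$ if needed.

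Granting the pointwise inequality I would apply it with $z=f(x)$, $w=g(x)$, integrate, and use H\"older's inequality with exponents $1/\theta$ and $1/(1-\theta)$ to get
\[ \int_\Omega\Psi\bigl(|\phi_{\Phi\Psi}(f)-\phi_{\Phi\Psi}(g)|\bigr)\,d\mu\le C\Bigl(\int_\Omega\Phi(|f-g|)\,d\mu\Bigr)^{\theta}\Bigl(\int_\Omega\Phi(|f|)+\Phi(|g|)\,d\mu\Bigr)^{1-\theta}. \]
For $f,g\in A_\Phi$ the last factor is bounded (from $\|f\|_{(\Phi)},\|g\|_{(\Phi)}\le\tfrac32$ and $\Phi\in\Delta_2^\Omega$), and when $\|f-g\|_{(\Phi)}\le1$ convexity of $\Phi$ gives $\int_\Omega\Phi(|f-g|)\,d\mu\le\|f-g\|_{(\Phi)}$; hence the right-hand side is at most $C'\|f-g\|_{(\Phi)}^{\theta}$. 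To return to the gauge norm I use that $\Psi\in\Delta_2^\Omega$ gives $\Psi(u/b)\le Cb^{-q_\Psi}\Psi(u)$ for $0<b\le1$, so a modular $\le\eta\le1$ forces the gauge norm $\le(C\eta)^{1/q_\Psi}$; this yields $\|\phi_{\Phi\Psi}(f)-\phi_{\Phi\Psi}(g)\|_{(\Psi)}\le C''\|f-g\|_{(\Phi)}^{\theta/q_\Psi}$ whenever $\|f-g\|_{(\Phi)}\le1$. Since $A_\Phi$ has diameter at most $3$ and $\phi_{\Phi\Psi}(A_\Phi)$ is bounded (same modular estimates), the inequality extends to all of $A_\Phi$ at the cost of a larger constant, giving the asserted $1\wedge\alpha$-H\"older property with $\alpha=\theta/q_\Psi$.

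The main obstacle is the pointwise inequality, and inside it the bookkeeping of the phase-flip regime: one must keep the constant uniform, use $\Delta_2$ on $\Psi$ to absorb the factor $2$ from the triangle inequality, and use $\nabla_2$ on $\Phi$ to make $\Phi(|z-w|)$ dominate $\Phi(\max(|z|,|w|))$ --- this is where the reflexivity hypotheses are genuinely needed and where the complex case ($\sign$ taking values in the unit circle rather than $\{\pm1\}$) differs from Delpech's real-sphere argument, the extra $|\sign z-\sign w|$ term being handled by $|\sign z-\sign w|\le 2|z-w|/\max(|z|,|w|)$. The remaining care is to make the index estimates uniform over the full range of values occurring on $A_\Phi$ --- automatic for $\Omega=\mathbb{N}$, and for $\Omega=[0,1]$ reduced, as indicated above, to the large-value control supplied by $\Delta_2^{[0,1]}\cap\nabla_2^{[0,1]}$ together with the modular bound on $A_\Phi$ --- and to note that the shell $A_\Phi$ (rather than the unit sphere) creates no difficulty, since the estimates above never used $f,g$ to be normalized beyond $\tfrac12\le\|f\|_{(\Phi)},\|g\|_{(\Phi)}\le\tfrac32$.
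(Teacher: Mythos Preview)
Your approach is viable but genuinely different from the paper's. The paper first replaces $\Phi,\Psi$ by equivalent N-functions lying globally in classes $\mathcal{K}(p_\Phi,q_\Phi)$, $\mathcal{K}(p_\Psi,q_\Psi)$ (so that the power-type bounds hold for \emph{all} $t>0$, not just on a ``relevant range''), and then proves a pointwise lemma bounding $|\varphi(|a|)\sign a-\varphi(|b|)\sign b|$ itself---not $\Psi$ of it---by either $\varphi(|a-b|)+4\frac{|a-b|}{|a|+|b|}\varphi(|a|+|b|)$ (when $\beta\le1$) or $(2\beta+4)\frac{|a-b|}{|a|+|b|}\varphi(|a|+|b|)$ (when $\beta\ge1$). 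With this lemma in hand the paper works directly with the gauge norm: it simply exhibits an explicit $b$ (depending on $\|f-h\|_{(\Phi)}$) and checks $\int_\Omega\Psi(\Delta_{\Phi\Psi}/b)\,d\mu\le1$, splitting into the three cases $\alpha\le\beta\le1$, $\alpha\le1\le\beta$, $1\le\alpha\le\beta$; in the latter two, the term $\frac{v}{w}\Phi(w/3)$ is handled via the inequality $\Phi^*(\Phi(t)/t)\le\Phi(t)$ and H\"older's inequality for the pair $(\Phi,\Phi^*)$. This yields the H\"older exponent $\min(1,\alpha)=\min(1,p_\Phi/q_\Psi)$. Your route---a pointwise bound on $\Psi(|\text{diff}|)$, then H\"older with exponents $1/\theta,1/(1-\theta)$, then a modular-to-norm conversion costing a further $1/q_\Psi$---is more modular but loses sharpness: you end with exponent $\theta/q_\Psi\le p_\Psi/(q_\Phi q_\Psi)$, strictly worse in general (e.g.\ $L^2\to L^2$ gives $1$ for the paper, at most $1/2$ for you). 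Two places in your sketch would need tightening to become a proof: the ``relevant range'' issue is not really dispensed with by the modular bound on $A_\Phi$, since your pointwise inequality must hold for \emph{all} values of $z,w$---the clean fix is exactly the paper's reduction to $\Phi,\Psi\in\mathcal{K}$; and the amplitude bound $|m(|z|)-m(|w|)|\le C\frac{|z-w|}{\max}m(\max)$ in the small-perturbation regime, while true, needs the derivative estimate $m'(u)\le\beta m(u)/u$ coming from $m\in\mathcal{K}(\alpha,\beta)$ together with $\log(s/t)\le\frac{1}{1-\delta}\frac{s-t}{s}$, rather than the vaguer ``regular-variation bound'' you cite.
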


For $0<\alpha \le \beta < \infty$, a non-decreasing continuous function $\varphi:[0,\infty) \to [0,\infty)$ with $\varphi(0)=0$ is said to be {\it in the class $\mathcal{K} (\alpha, \beta )$}, and denoted as $\varphi \in \mathcal{K} (\alpha, \beta )$, if $\frac{\varphi(t)}{t^\alpha}$ is a non-decreasing function of $t > 0$ and $\frac{\varphi(t)}{t^\beta}$ is a non-increasing function  of $t > 0$. 
Hence for $\varphi \in \mathcal{K} (\alpha, \beta )$, $r \ge 1$ and $0<s \le 1$, we have 
\begin{eqnarray} \label{eq:classK}
r^\alpha \varphi(t) \le \varphi (rt), \ \ \ \ 
\varphi (st) \le s^\alpha \varphi(t), \ \ \ \ 
\varphi (rt) \le r^\beta \varphi(t), \ \ \ \ 
s^\beta \varphi(t) \le \varphi (st). 
\end{eqnarray} 
As Remark 2.2 (i) and Proposition 2.3 in \cite{MR2157375}, for $\Phi \in \Delta_2^\Omega \cap \nabla_2^\Omega$, there exist constants $0<p_\Phi \le q_\Phi <\infty$, $D>0$, $ C>0 $ and an N-function $\tilde{\Phi} \in \mathcal{K} (p_\Phi, q_\Phi)$ such that $$D\Phi(t) \le \tilde{\Phi}(t) \le C\Phi(t)$$ for all $t\ge t_0$ if $\Omega=[0,1]$ (for all $0<t\le t_0$ if $\Omega = \mathbb{N}$). 
Then $L^{\tilde{\Phi}} = L^{\Phi} $ as set, and the identity map is isomorphism. 
Hence the map $A_\Phi\ni f \mapsto \frac{\|f\|_{(\Phi)}}{\|f\|_{(\tilde \Phi)}}f \in A_{\tilde \Phi}$ is a bi-Lipschitz homeomorphism. 
Thus we may assume $\Phi \in \mathcal{K} (p_\Phi, q_\Phi )$ and $\Psi \in \mathcal{K} (p_\Psi, q_\Psi)$.

For $\alpha=\frac{p_\Phi}{q_\Psi}$ and $\beta = \frac{q_\Phi}{p_\Psi} $, the non-decreasing continuous function $\varphi = \Psi^{-1} \circ \Phi $ is in the class $\mathcal{K} (\alpha, \beta)$. 
Then for $f \in L^{\Phi}$ with $\|f\|_{(\Phi)} \ge 1$, by the inequalities (\ref{eq:classK}), 
since $\varphi\left(\frac{|f(x)|}{\|f\|_{(\Phi)}}\right) \le \frac{\varphi\left(|f(x)|\right)}{\|f\|^\alpha_{(\Phi)}}$, we have 
\begin{eqnarray*}
1
=\left\|\phi_{\Phi\Psi}\left(\frac{f}{\|f\|_{(\Phi)}}\right)\right\|_{(\Psi)} 
= \left\|\varphi\left(\frac{|f|}{\|f\|_{(\Phi)}}\right)\right\|_{(\Psi)}
\le \left\| \frac{\varphi\left(|f|\right)}{\|f\|^\alpha_{(\Phi)}}\right\|_{(\Psi)}
= \frac{\left\|\phi_{\Phi\Psi}\left(f\right)\right\|_{(\Psi)}}{\|f\|^\alpha_{(\Phi)}}.
\end{eqnarray*} 
Hence $ \|f\|^\alpha_{(\Phi)} \le \left\|\phi_{\Phi\Psi}\left(f\right)\right\|_{(\Psi)}$. 
Similarly, we have $ \left\|\phi_{\Phi\Psi}\left(f\right)\right\|_{(\Psi)}  \le  \|f\|^\beta_{(\Phi)}$. 
For $f \in L^{\Phi} $ with $0<\|f\|_{(\Phi)} \le 1$, 
we have $\|f\|^\beta_{(\Phi)} \le \left\|\phi_{\Phi\Psi}\left(f\right)\right\|_{(\Psi)}  \le  \|f\|^\alpha _{(\Phi)}$. 
That is, for $f \in L^{\Phi}$,  
\begin{equation}\label{eq:Mazurmap}
\min\{\|f\|^\alpha_{(\Phi)}, \|f\|^\beta_{(\Phi)}\} \le \left\|\phi_{\Phi\Psi}\left(f\right)\right\|_{(\Psi)}  \le  \max\{ \|f\|^\alpha _{(\Phi)}, \|f\|^\beta_{(\Phi)}\}
\end{equation}
holds. 

\begin{lemma} \label{lem:Orlnormineq} 
Let $\varphi\in \mathcal{K} (\alpha, \beta )$. 
Then for all $a,b \in \mathbb{C}$ with $a,b\not=0$ we have: 
\begin{itemize}
\item If $\beta \le 1$, then 
$$|\varphi(|a|)\sign (a) - \varphi(|b|)\sign (b)| \le \varphi(|a-b|) + 4\frac{|a-b|}{|a|+|b|}\varphi(|a|+|b|). $$
\item If $\beta \ge 1$, then 
$$|\varphi(|a|)\sign (a) - \varphi(|b|)\sign (b)| \le (2\beta+4)\frac{|a-b|}{|a|+|b|}\varphi(|a|+|b|). $$
\end{itemize}
\end{lemma}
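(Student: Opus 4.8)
The plan is to split the left-hand side into a \emph{radial} part and an \emph{angular} part using the identity
\[
\varphi(|a|)\sign(a)-\varphi(|b|)\sign(b)
=\bigl(\varphi(|a|)-\varphi(|b|)\bigr)\sign(a)+\varphi(|b|)\bigl(\sign(a)-\sign(b)\bigr),
\]
and to estimate each part by a one-variable argument. Both asserted inequalities, the left-hand side, and the quantities $|a-b|$ and $|a|+|b|$ are symmetric in $a$ and $b$, so I may assume $|a|\ge|b|$; then, since $\varphi$ is non-decreasing and $|\sign(a)|=1$, the radial part has modulus $\varphi(|a|)-\varphi(|b|)$ and the angular part has modulus $\varphi(|b|)\,|\sign(a)-\sign(b)|$.

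For the angular part, which is common to both cases, I would use the elementary bound $|\sign(a)-\sign(b)|\le 2|a-b|/|a|$ valid when $|a|\ge|b|$, obtained by writing $\sign(a)-\sign(b)=(a-b)/|a|+b(|b|-|a|)/(|a|\,|b|)$ and using $\bigl||a|-|b|\bigr|\le|a-b|$. Combined with $\varphi(|b|)\le\varphi(|a|+|b|)$ and $1/|a|\le 2/(|a|+|b|)$ (both from $|a|\ge|b|$ and monotonicity of $\varphi$), this gives
\[
\varphi(|b|)\,|\sign(a)-\sign(b)|\le 4\,\frac{|a-b|}{|a|+|b|}\,\varphi(|a|+|b|),
\]
which already supplies the second summand in each of the two displayed inequalities.

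The radial part is where the hypothesis on $\beta$ enters. If $\beta\le 1$, then $\varphi(t)/t=\bigl(\varphi(t)/t^{\beta}\bigr)t^{\beta-1}$ is a product of non-negative non-increasing functions, hence non-increasing, and a non-decreasing function vanishing at $0$ with non-increasing $\varphi(t)/t$ is subadditive; therefore $\varphi(|a|)\le\varphi(|b|)+\varphi(|a|-|b|)$, so $\varphi(|a|)-\varphi(|b|)\le\varphi(|a|-|b|)\le\varphi(|a-b|)$, the first summand in the $\beta\le1$ inequality. If $\beta\ge 1$, then the non-increasing behaviour of $\varphi(t)/t^{\beta}$ gives $\varphi(|b|)/\varphi(|a|)\ge(|b|/|a|)^{\beta}$, and the scalar inequality $1-x^{\beta}\le\beta(1-x)$ for $x\in[0,1]$ (immediate by differentiating $\beta(1-x)-(1-x^{\beta})$) yields
\[
\varphi(|a|)-\varphi(|b|)\le\Bigl(1-(|b|/|a|)^{\beta}\Bigr)\varphi(|a|)\le\beta\,\frac{|a|-|b|}{|a|}\,\varphi(|a|)\le 2\beta\,\frac{|a-b|}{|a|+|b|}\,\varphi(|a|+|b|),
\]
again using $|a|-|b|\le|a-b|$, $|a|\ge(|a|+|b|)/2$ and monotonicity of $\varphi$. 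Adding the radial and angular estimates gives $\varphi(|a-b|)+4\frac{|a-b|}{|a|+|b|}\varphi(|a|+|b|)$ when $\beta\le1$ and $(2\beta+4)\frac{|a-b|}{|a|+|b|}\varphi(|a|+|b|)$ when $\beta\ge1$, as claimed. The degenerate case $\varphi(|a|)=0$, which forces $\varphi(|b|)=0$, makes the left-hand side vanish and is trivial.

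The step I expect to require the most care is the bookkeeping of constants: one must consistently pass from $|a|$ or $|b|$ to $|a|+|b|$ inside $\varphi$ and from $1/|a|$ to $1/(|a|+|b|)$ using $|a|\ge|b|$, so that both bounds collapse to multiples of the single quantity $\frac{|a-b|}{|a|+|b|}\varphi(|a|+|b|)$. One should also resist absorbing the term $\varphi(|a-b|)$ in the $\beta\le1$ case into that quantity: since $|a-b|\le|a|+|b|$ and $\varphi(t)/t^{\beta}$ is non-increasing we have $\varphi(|a-b|)\ge\bigl(|a-b|/(|a|+|b|)\bigr)^{\beta}\varphi(|a|+|b|)$, which for $\beta<1$ is genuinely the dominant term. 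Everything else is a routine application of the class-$\mathcal K(\alpha,\beta)$ inequalities $(\ref{eq:classK})$.
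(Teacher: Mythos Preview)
Your proof is correct and follows essentially the same radial/angular decomposition as the paper, arriving at the same two intermediate bounds $4\frac{|a-b|}{|a|+|b|}\varphi(|a|+|b|)$ and (for $\beta\ge1$) $2\beta\frac{|a-b|}{|a|+|b|}\varphi(|a|+|b|)$. Your handling of the radial part when $\beta\ge1$ is in fact slightly cleaner than the paper's: by bounding $\varphi(|a|)\le\varphi(|a|+|b|)$ directly via monotonicity you avoid the paper's detour through $\frac{|a|^\alpha}{(|a|+|b|)^\alpha}$ and its ensuing case split on $\alpha\lessgtr1$.
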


\begin{proof}
We have 
\begin{eqnarray*}
&& |\varphi(|a|)\sign (a) - \varphi(|b|)\sign (b)| \\ 
&\le & |\varphi(|a|)\sign (a) - \varphi(|b|)\sign (a)| + |\varphi(|b|)\sign (a) - \varphi(|b|)\sign (b)| \\
&\le & |\varphi(|a|) - \varphi(|b|)| + |\sign (a) - \sign (b)|\varphi(|a|+|b|). 
\end{eqnarray*}
Hence
\begin{eqnarray*}
&& |\sign (a) - \sign (b)|\varphi(|a|+|b|) \\ 
&\le & \frac{|\sign (a) - \sign (b)|(|a|+|b|)}{|a|+|b|}\varphi(|a|+|b|) \\
&=& \frac{||a|\sign (a) - |a|\sign (b)|+||b|\sign (a) - |b|\sign (b)|}{|a|+|b|}\varphi(|a|+|b|) \\
&\le & \frac{|a - b|+|b - |a|\sign (b)|+||b|\sign (a) - a|+|a - b|}{|a|+|b|}\varphi(|a|+|b|) \\
&\le & \frac{|a - b|+||b| - |a||+||b| - |a||+|a - b|}{|a|+|b|}\varphi(|a|+|b|) \\
&\le & 4\frac{|a - b|}{|a|+|b|}\varphi(|a|+|b|)
\end{eqnarray*}
If $|b| = |a|$, then the lemma was proved.  
We can suppose that $0 < |b| <  |a|$. 
If $\beta \le 1$, then $\frac{\varphi(t)}{t}$ is a non-increasing function of $t$. 
Hence 
\begin{eqnarray*}
\varphi(|a|) - \varphi(|b|) 
&=& \frac{|b|}{|a|} \varphi(|a|) + \frac{|a|-|b|}{|a|} \varphi(|a|) - \varphi(|b|) \\ 
&\le & |b| \frac{\varphi(|b|)}{|b|} + (|a|-|b|)\frac{\varphi(|a|-|b|)}{|a|-|b|}  - \varphi(|b|)\\ 
&= & \varphi(|a|-|b|) \\ 
&\le & \varphi(|a-b|). 
\end{eqnarray*}
On the other hand, if $\beta \ge 1$, then $1-t^\beta \le \beta(1-t)$ for $t\in [0,1]$.  
Using this inequality, since $\varphi$ is in the class $\mathcal{K} (\alpha, \beta )$, 
we have 
\begin{eqnarray*}
\varphi(|a|) - \varphi(|b|) 
&= & \varphi(|a|) - |b|^\beta \frac{\varphi(|b|)}{|b|^\beta}\\  
&\le & \varphi(|a|) - |b|^\beta \frac{\varphi(|a|)}{|a|^\beta}\\  
&= & \left(1- \frac{|b|^\beta }{|a|^\beta} \right) |a|^\alpha \frac{\varphi(|a|)}{|a|^\alpha }\\  
&\le & \beta \left(1- \frac{|b|}{|a|} \right)  \frac{|a|^\alpha}{(|a|+|b|)^\alpha }\varphi(|a|+|b|). 
\end{eqnarray*}
If $\alpha \ge 1$, then $\frac{|a|^\alpha}{(|a|+|b|)^\alpha } \le 1 \le 2\frac{|a|}{|a|+|b|}$. 
If $\alpha \le 1$, then $$\frac{|a|^\alpha}{(|a|+|b|)^\alpha } = \left( \frac{|a|+|b|}{|a|}\right)^{1-\alpha} \frac{|a|}{|a|+|b| } \le 2\frac{|a|}{|a|+|b|}.$$ 
Hence 
\begin{eqnarray*}
\beta \left(1- \frac{|b|}{|a|} \right)  \frac{|a|^\alpha}{(|a|+|b|)^\alpha }\varphi(|a|+|b|)
&\le & 2\beta \left(1- \frac{|b|}{|a|} \right)  \frac{|a|}{|a|+|b|}\varphi(|a|+|b|) \\ 
&=& 2\beta  \frac{|a|-|b|}{|a|+|b|}\varphi(|a|+|b|) \\ 
&\le & 2\beta  \frac{|a-b|}{|a|+|b|}\varphi(|a|+|b|) . 
\end{eqnarray*}
This proves the lemma. 
\end{proof}
Using above lemma, we can prove Theorem \ref{theorem:unifhomeo} as in \cite{MR2157375}.  

\begin{proof}[Proof of Theorem \ref{theorem:unifhomeo}]
We may assume $\varphi = \Psi^{-1} \circ \Phi \in \mathcal{K} (\alpha, \beta )$ for some $0<\alpha \le \beta <\infty$. 
Fix $f,h \in A_\Phi$ with $f \not= h$.  
Let 
$$\Delta_{\Phi\Psi}(x) = |\phi_{\Phi\Psi}(f)(x)-\phi_{\Phi\Psi}(h)(x)| = |\varphi(|f(x)|)\sign(f(x))-\varphi(|h(x)|)\sign(h(x))|, $$
$v(x) = |f(x)-h(x)|$ and $w(x) =|f(x)| + |h(x)|$ for $x \in \Omega $.  
Our aim is to estimate 
$$\| \phi_{\Phi\Psi}(f)-\phi_{\Phi\Psi}(h) \|_{(\Psi)}  = \|\Delta_{\Phi\Psi}\|_{(\Psi)} =  \inf \left\{ b> 0 \mid \int_\Omega \Psi \left( \frac{\Delta_{\Phi\Psi}}{b}\right) d\mu(x) \le 1\right\} $$
using $\| f-h\|_{(\Phi)} = \|v\|_{(\Phi)}$. 
We show the estimate for the three cases $\alpha \le \beta \le 1$, $\alpha \le 1 \le \beta$, and $ 1\le \alpha \le \beta$. 

\ 

\noindent
{\bf Case 1}: $\alpha \le \beta \le 1$.

Let $b=\frac{\|v\|_{(\Phi)}^\alpha}{8}$, then $0<b<1$. 
Since $\beta \le 1$, using Lemma \ref{lem:Orlnormineq} we have 
$$ \frac{\Delta_{\Phi\Psi}(x) }{b}
\le \frac{1}{b} \varphi(v(x)) + \frac{4}{b}\frac{v(x)}{w(x)}\varphi(w(x)) 
\le \frac{4}{b} \varphi(v(x)) + \frac{4}{b}\frac{v(x)}{w(x)}\varphi(w(x)).  $$
If $\frac{4}{b}\le 1$ and $\frac{4}{b}\frac{v(x)}{w(x)} \le 1$, then since $\frac{1}{\beta}-1 \ge 0$, $\frac{v(x)}{w(x)}\le 1$, and $b<1$, 
using inequalities (\ref{eq:classK}) we have 
\begin{eqnarray*}
\frac{4}{b} \varphi(v(x)) + \frac{4}{b}\frac{v(x)}{w(x)}\varphi(w(x)) 
&\le & \varphi \left(\left(\frac{4}{b}\right)^\frac{1}{\beta} v(x)\right) + \varphi\left(\left(\frac{4}{b}\frac{v(x)}{w(x)}\right)^\frac{1}{\beta} w(x)\right) \\
&\le & 2 \varphi \left(\left(\frac{4}{b}\right)^\frac{1}{\beta} v(x)\right)  \\
&\le & \varphi \left(2^\frac{1}{\alpha}\frac{4^\frac{1}{\beta}}{b^\frac{1}{\beta}} v(x)\right) \\ 
&\le & \varphi \left(\frac{8^\frac{1}{\alpha}}{b^\frac{1}{\alpha}} v(x)\right) .  
\end{eqnarray*}
Similarly, for other cases ($\frac{4}{b} > 1$ or $\frac{4}{b}\frac{v(x)}{w(x)} >\ 1$) we have 
\begin{eqnarray*}
\frac{4}{b} \varphi(v(x)) + \frac{4}{b}\frac{v(x)}{w(x)}\varphi(w(x)) 
\le \varphi \left(\left(\frac{8}{b}\right)^\frac{1}{\alpha} v(x)\right) .  
\end{eqnarray*}
Hence
\begin{eqnarray*}
\int_\Omega \Psi \left( \frac{\Delta_{\Phi\Psi}(x)}{b}\right) d\mu(x) 
&\le& \int_\Omega \Psi \left( \varphi \left(\left(\frac{8}{b}\right)^\frac{1}{\alpha} v(x)\right) \right) d\mu(x)  \\
&=& \int_\Omega \Phi \left( \frac{v(x)}{\|v\|_{(\Phi)}} \right) d\mu(x) \\ 
&=& 1. 
\end{eqnarray*}
This means $\|\Delta_{\Phi\Psi}\|_{(\Psi)} \le \frac{\|v\|_{(\Phi)}^\alpha}{8}$ . 
Hence $\phi_{\Phi\Psi}$ is $\alpha$-H${\rm \ddot{o}}$lder on $A_\Phi$.

\ 

\noindent
{\bf Case 2}: $1\le \alpha \le \beta $.

Let $b= 3^{\beta} (2\beta+4)\|v\|_{(\Phi)}$. 
Since $\beta \ge 1$, using Lemma \ref{lem:Orlnormineq} and inequalities (\ref{eq:classK}) we have 
$$ \frac{\Delta_{\Phi\Psi}(x) }{b}
\le \frac{2\beta+4}{b}\frac{v(x)}{w(x)}\varphi(w(x)) 
\le \frac{3^\beta (2\beta+4)}{b}\frac{v(x)}{w(x)}\varphi\left(\frac{w(x)}{3}\right) .  $$
Let $\Omega_1:=\{ x\in \Omega \mid \frac{3^\beta (2\beta+4)}{b}\frac{v(x)}{w(x)} \le 1\} $ and $\Omega_2:=\{ x\in \Omega \mid \frac{3^\beta (2\beta+4)}{b}\frac{v(x)}{w(x)} > 1\} $. 
If $x \in \Omega_1 $, by the convexity of $\Psi$ 
\begin{eqnarray*}
\Psi\left( \frac{\Delta_{\Phi\Psi}(x)}{b}\right)
&\le & \Psi\left( \frac{3^\beta (2\beta+4)}{b}\frac{v(x)}{w(x)}\varphi\left(\frac{w(x)}{3}\right) \right) \\ 
&\le & \frac{3^\beta (2\beta+4)}{b}\frac{v(x)}{w(x)} \Psi\left( \varphi\left(\frac{w(x)}{3}\right) \right) \\ 
&=& \frac{3^\beta (2\beta+4)}{b}\frac{v(x)}{w(x)} \Phi\left(\frac{w(x)}{3} \right) . 
\end{eqnarray*}
Since 
\begin{eqnarray*}
\left\| \frac{w}{3} \right\|_{(\Phi)} 
= \left\| \frac{|f|+|h|}{3} \right\|_{(\Phi)}
\le  \frac{\left\| f\right\|_{(\Phi)} + \left\| h \right\|_{(\Phi)}}{3} 
\le  1, 
\end{eqnarray*}
using the inequality $\Phi^*\left( \frac{\Phi(t)}{t}\right) \le \Phi(t)$ for $t>0$ (see \cite{MR0126722}, p.13), we have 
\begin{eqnarray*}
\int_{\Omega} \Phi^*\left( \frac{\Phi\left( \frac{w(x)}{3} \right)}{ \frac{w(x)}{3} }\right) d\mu(x)
\le \int_{\Omega} \Phi\left( \frac{w(x)}{3} \right) d\mu(x) \le 1. 
\end{eqnarray*}
Thus 
\begin{eqnarray*}
\left\| \frac{\Phi\left( \frac{w(x)}{3} \right)}{ \frac{w(x)}{3} }\right\|_{(\Phi^*)} 
\le 1. 
\end{eqnarray*}
Using non-normalized H\"older inequality  (see Proposition 1 at 3.3 in \cite{MR1113700}), we have 
\begin{eqnarray*} 
\int_{\Omega_1} \Psi \left( \frac{\Delta_{\Phi\Psi}(x)}{b}\right) d\mu(x)  
&\le& \int_{\Omega_1} \frac{3^\beta (2\beta+4)}{b}\frac{v(x)}{w(x)} \Phi\left(\frac{w(x)}{3} \right)  d\mu(x)  \\ 
&\le& \frac{3^{\beta-1} (2\beta+4)}{b} 2\|v\|_{(\Phi)} \left\| \frac{\Phi\left( \frac{w(x)}{3} \right)}{ \frac{w(x)}{3} }\right\|_{(\Phi^*)}   \\ 
&\le& \frac{2}{3}. 
\end{eqnarray*}
On the other hand, since $\frac{\varphi(t)}{t}$ is a non-decreasing function, for $x \in \Omega_2$ we have 
\begin{eqnarray*}
\frac{3^\beta (2\beta+4)}{b}\frac{v(x)}{w(x)}\varphi\left(\frac{w(x)}{3}\right)
\le  \varphi\left(\frac{3^\beta (2\beta+4)}{b}\frac{v(x)}{w(x)}\frac{w(x)}{3}\right) 
= \varphi\left(\frac{v(x)}{3\|v\|_{(\Phi)}}\right) . 
\end{eqnarray*}
Hence by the convexity of $\Phi$
\begin{eqnarray*}
\int_{\Omega_2} \Psi \left( \frac{\Delta_{\Phi\Psi}(x)}{b}\right) d\mu(x)  
\le \int_\Omega \Phi \left(\frac{v(x)}{3\|v\|_{(\Phi)}}\right) d\mu(x) 
\le  \frac{1}{3}. 
\end{eqnarray*}
Summarizing these inequalities, we have 
\begin{eqnarray*}
\int_\Omega \Psi \left( \frac{\Delta_{\Phi\Psi}(x)}{b}\right) d\mu(x)  \le 1. 
\end{eqnarray*}
This means 
$\|\Delta_{\Phi\Psi}\|_{(\Psi)} \le b = 3^{\beta} (2\beta+4)\|v\|_{(\Phi)}$. 
Hence $\phi_{\Phi\Psi}$ is Lipschitz on $A_\Phi$. 

\

\noindent
{\bf Case 3}: $\alpha \le 1 \le \beta$.

Let $b= 3^{\beta-\alpha+1} (2\beta+4)\|v\|_{(\Phi)}^\alpha $. 
Since $\beta \ge 1$, as above 
\begin{eqnarray*} 
\frac{1}{b}\Delta_{\Phi\Psi}(x) 
\le \frac{3^\beta (2\beta+4)}{b}\frac{v(x)}{w(x)}\varphi\left(\frac{w(x)}{3}\right) .  
\end{eqnarray*}
Let $\Omega_1:=\{ x\in \Omega \mid \frac{3^\beta (2\beta+4)}{b}\frac{v(x)}{w(x)} \le 1\} $ 
and $\Omega_2:=\{ x\in \Omega \mid \frac{3^\beta (2\beta+4)}{b}\frac{v(x)}{w(x)} > 1\} $. 
Since $\frac{\|v\|_{(\Phi)}}{3}\le \left(\frac{\|v\|_{(\Phi)}}{3}\right)^\alpha$, for $x \in \Omega_1$, 
as above we have 
\begin{eqnarray*}
\int_{\Omega_1} \Psi \left( \frac{\Delta_{\Phi\Psi}(x)}{b}\right) d\mu(x)  
&\le & \frac{3^{\beta-1} (2\beta+4)2\|v\|_{(\Phi)} }{b} \\  
&\le & \frac{3^{\beta-1} (2\beta+4)3^{1-\alpha }2\|v\|_{(\Phi)}^\alpha }{b}  \\
&\le & \frac{2}{3}. 
\end{eqnarray*}
On the other hand, for $x \in \Omega_2$, using inequalities (\ref{eq:classK}) we have 
\begin{eqnarray*}
\frac{3^\beta (2\beta+4)}{b}\frac{v(x)}{w(x)}\varphi\left(\frac{w(x)}{3}\right)
&\le & \varphi\left( \left( \frac{3^\beta (2\beta+4)}{b}\frac{v(x)}{w(x)} \right)^\frac{1}{\alpha} \frac{w(x)}{3}\right) \\ 
&= & \varphi\left( \frac{3^{\frac{\beta}{\alpha}-1} (2\beta+4)^\frac{1}{\alpha}}{b^\frac{1}{\alpha}} \left(\frac{v(x)}{w(x)} \right)^{\frac{1}{\alpha}-1} v(x)\right) \\ 
&\le & \varphi\left( \frac{3^{\frac{\beta}{\alpha}-1} (2\beta+4)^\frac{1}{\alpha}}{b^\frac{1}{\alpha}}  v(x)\right) \\ 
&\le & \varphi\left( \frac{v(x)}{3^\frac{1}{\alpha}\|v\|_{(\Phi)}}\right) . 
\end{eqnarray*}
Hence by the convexity of $\Phi$
\begin{eqnarray*}
\int_{\Omega_2} \Psi \left( \frac{\Delta_{\Phi\Psi}(x)}{b}\right) d\mu(x)  
\le \int_\Omega \Phi \left(\frac{v(x)}{3^\frac{1}{\alpha}\|v\|_{(\Phi)}}\right) d\mu(x) 
\le  \frac{1}{3^\frac{1}{\alpha}} \le \frac{1}{3}. 
\end{eqnarray*}
Summarizing these inequalities, we have 
\begin{eqnarray*}
\int_\Omega \Psi \left( \frac{\Delta_{\Phi\Psi}(x)}{b}\right) d\mu(x) \le 1. 
\end{eqnarray*}
This means $\|\Delta_{\Phi\Psi}\|_{(\Psi)} \le b =3^{\beta-\alpha+1} (2\beta+4)\|f-h\|_{(\Phi)}^\alpha$. 	
Hence $\phi_{\Phi\Psi}$ is $\alpha$-H${\rm \ddot{o}}$lder on $A_\Phi$. 
This completes the proof. 
\end{proof}

\section{Proof of Theorem \ref{thm:TtoTPhi}}

Let $G$ be a locally compact second countable group, and $\Phi$ an N-function such that $\Phi \in \Delta_2^\Omega \cap \nabla_2^\Omega$, and  
\begin{center}
$(1)$ $\Omega=[0,1]$ and $\mathbb{K}=\mathbb{R}$,  \ \ or \ \ \
$(2)$ $\Omega=[0,1]$ and $\mathbb{K}=\mathbb{C}$,  \ \ or \ \ \ 
$(3)$ $\Omega=\mathbb{N}$ and $\mathbb{K}=\mathbb{C}$. 
\end{center}
By Proposition 4 at 3.2 in \cite{MR1113700} the Orlicz space $L^\Phi([0,1], \mathbb{K})$ with the gauge norm $\|\ \|_{(\Phi)}$ 
is a rearrangement-invariant function space. 
Hence by Theorem 10 in \cite{MR0158259}, Theorem 1.1 in \cite{MR1295579}, by Theorem 1 in \cite{MR771996} (since $\Omega=\mathbb{N}$ has counting measure),  
for a surjective linear isometry $U$ on $L^{\Phi}(\Omega, \mathbb{K})$, 
there exist a Borel function $h:\Omega \to \mathbb{R}$, 
and an invertible Borel map $T:\Omega \to \Omega$  
such that 
\begin{enumerate}
\item[{\rm (i)}] for any Borel set $A\subset \Omega $, $\mu (T^{-1}A)=0$ if and only if $\mu (A)=0$, and 
\item[{\rm (ii)}] for all $f\in L^{\Phi}(\Omega)$ 
\begin{equation}\label{eq:U=hT_1Orl} 
Uf(x) = h(x)f(T(x))\ \ \    \text{ a.e. } x\in \Omega.  
\end{equation}
\end{enumerate}
In particular, if $\Omega =\mathbb{N}$, then $|h(x)|=1$ for all $x\in \Omega$. 
When $\Omega=[0,1]$, since $\mu \circ T$ is a measure which is absolutely continuous with respect to $\mu $, it has the Radon-Nykodym derivative $r:[0,1] \to [0,\infty)$, which satisfies 
\begin{eqnarray*}
\mu (TA) =  \int_A r(x) d\mu, \ \ \ \ \ 
 \int_A f(x) d\mu(x) = \int_{T^{-1}A} f(Tx) r(x)d\mu(x)  
\end{eqnarray*}
for any Borel set $A\subset [0,1]$ and any $f\in L^{\Phi}([0,1],\mathbb{K} )$. 
Under this situation, by the same proof of Theorem 5.4.10 (106) in \cite{MR1957004}, the equation (\ref{eq:U=hT_1Orl}) implies  
\begin{equation}\label{eq:trans-theta} 
\Phi (|h(x)|\alpha ) = r(x) \Phi (\alpha)
\end{equation}
for almost all $x\in [0,1]$ and all $\alpha \ge 0$.


\begin{lemma}
The conjugation 
$$U \mapsto \phi_{\Phi, t^2} \circ U \circ \phi_{t^2, \Phi}$$ 
is a homomorphism from $O(L^{\Phi}(\Omega, \mathbb{K}))$ to $O(L^2(\Omega, \mathbb{K}))$. 
\end{lemma}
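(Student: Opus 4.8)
The claim is that $U \mapsto \phi_{\Phi,t^2}\circ U\circ\phi_{t^2,\Phi}$ sends a surjective linear isometry of $L^\Phi$ to a surjective linear isometry of $L^2$, and that this assignment is a group homomorphism. Since $\phi_{t^2,\Phi}$ and $\phi_{\Phi,t^2}$ are mutually inverse bijections $S_2\to S_\Phi$ and $S_\Phi\to S_2$ (they are not linear, so the first point to check is that the \emph{conjugate} of a linear map really is linear). The plan is to use the Banach--Lazar type structure theorem recalled just above: every surjective linear isometry $U$ of $L^\Phi(\Omega,\mathbb K)$ has the form $Uf(x)=h(x)f(T(x))$ with $T$ an invertible Borel map preserving null sets, and in the case $\Omega=[0,1]$ the Radon--Nikodym derivative $r$ of $\mu\circ T$ satisfies the functional equation $\Phi(|h(x)|\alpha)=r(x)\Phi(\alpha)$ for a.e.\ $x$ and all $\alpha\ge 0$ (while for $\Omega=\mathbb N$ one simply has $|h(x)|=1$). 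The strategy is to compute $\phi_{\Phi,t^2}\circ U\circ\phi_{t^2,\Phi}$ explicitly on an arbitrary $g\in L^2$ and show it is again a weighted composition operator of the same type, with the \emph{same} underlying transformation $T$ but a new multiplier.

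\medskip
\noindent\textbf{Key steps.} First, I would unwind the definitions pointwise. For $g\in L^2$ write $f=\phi_{t^2,\Phi}(g)=\Phi^{-1}(|g|^2)\,\sign(g)$ (up to normalising constants in the definition of $\Phi_2(t)=t^2/2$, which I will absorb silently). Then $Uf(x)=h(x)f(Tx)$, and applying $\phi_{\Phi,t^2}$ gives
\[
\bigl(\phi_{\Phi,t^2}(Uf)\bigr)(x)=\bigl(\Phi(|h(x)|\,|f(Tx)|)\bigr)^{1/2}\,\sign\bigl(h(x)f(Tx)\bigr).
\]
Now substitute $|f(Tx)|=\Phi^{-1}(|g(Tx)|^2)$, so $\Phi(|h(x)|\,|f(Tx)|)=\Phi\bigl(|h(x)|\,\Phi^{-1}(|g(Tx)|^2)\bigr)$. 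Here the functional equation \eqref{eq:trans-theta} enters with $\alpha=\Phi^{-1}(|g(Tx)|^2)$: it yields $\Phi(|h(x)|\alpha)=r(x)\Phi(\alpha)=r(x)|g(Tx)|^2$. Hence
\[
\bigl(\phi_{\Phi,t^2}\circ U\circ\phi_{t^2,\Phi}\bigr)(g)(x)=\sqrt{r(x)}\;|g(Tx)|\;\sign\bigl(h(x)\bigr)\sign\bigl(g(Tx)\bigr)=\sqrt{r(x)}\,\sign(h(x))\,g(Tx),
\]
i.e.\ the conjugated map is the weighted composition operator with multiplier $\tilde h(x)=\sqrt{r(x)}\,\sign(h(x))$ and transformation $T$; in the sequence-space case $r\equiv$ the obvious Jacobian for the counting measure and $|h|\equiv 1$, and the same formula holds. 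This operator is manifestly \emph{linear} in $g$. Second, one checks it is an isometry of $L^2$: $\int|\tilde h(x)|^2|g(Tx)|^2\,d\mu(x)=\int r(x)|g(Tx)|^2\,d\mu(x)=\int|g(y)|^2\,d\mu(y)$ by the change-of-variables identity for $T$ recalled above, so it preserves the $L^2$-norm; surjectivity follows because $U$ is surjective and the Mazur maps are bijections of the spheres (equivalently, one exhibits the inverse as the conjugate of $U^{-1}$). Third, functoriality: since $\phi_{t^2,\Phi}\circ\phi_{\Phi,t^2}=\mathrm{id}$ on the relevant sphere and both are homogeneous of the appropriate degree, $(\phi_{\Phi,t^2}\circ U_1\circ\phi_{t^2,\Phi})\circ(\phi_{\Phi,t^2}\circ U_2\circ\phi_{t^2,\Phi})=\phi_{\Phi,t^2}\circ U_1U_2\circ\phi_{t^2,\Phi}$, and the identity clearly maps to the identity, so the assignment is a group homomorphism into $O(L^2(\Omega,\mathbb K))$.

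\medskip
\noindent\textbf{Main obstacle.} The only real work is the first step: verifying that the conjugate is honestly linear. A priori $\phi_{t^2,\Phi}$ and $\phi_{\Phi,t^2}$ are wildly nonlinear, so linearity of the composite is a genuine consequence of the rigid weighted-composition structure of $U$ together with the transition equation \eqref{eq:trans-theta}; it would fail for a general Lipschitz map of $L^\Phi$. One must also be a little careful with the normalising constants hidden in $\Phi_2(t)=t^2/2$ versus the bare $t^2$, with the convention $\sign(0)=0$ so that the pointwise formulas make sense on the zero set of $g$, and—in the $\Omega=[0,1]$ case—with the fact that \eqref{eq:trans-theta} is an a.e.\ statement, so the substitution $\alpha=\Phi^{-1}(|g(Tx)|^2)$ is legitimate for a.e.\ $x$ simultaneously for all such $g$. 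None of these is deep, but they are the points where the proof could silently go wrong, so I would state them explicitly rather than sweep them into "a straightforward computation."
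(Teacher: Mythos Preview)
Your proposal is correct and follows essentially the same route as the paper: both use the weighted-composition form of $U$ together with the transition equation $\Phi(|h(x)|\alpha)=r(x)\Phi(\alpha)$ to compute the conjugate explicitly as $g\mapsto r(x)^{1/2}\sign(h(x))\,g(Tx)$, then verify linearity, isometry via the change-of-variables identity for $T$, and invertibility via the conjugate of $U^{-1}$. The only cosmetic difference is that the paper carries out the pointwise computation first on simple functions and then extends by density, whereas you work directly with an arbitrary $g\in L^2$; your added remarks about $\sign(0)$ and the a.e.\ nature of \eqref{eq:trans-theta} are well placed.
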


\begin{proof}
For a simple function $f\in L^2(\Omega, \mathbb{K})$, using the equations (\ref{eq:U=hT_1Orl}) and (\ref{eq:trans-theta}) we have 
\begin{eqnarray*} 
&& \phi_{\Phi, t^2} \circ U \circ \phi_{t^2, \Phi} (f)(x) \\
&=&  \phi_{\Phi, t^2} \circ U \left(\Phi^{-1}(|f(x)|^2) \sign (f(x))\right)  \\
&=& \phi_{\Phi, t^2} \left( h(x) \Phi^{-1}(|f(Tx)|^2) \sign (f(Tx)) \right) \\ 
&=& \left( \Phi \left( |h(x) \Phi^{-1}(|f(Tx)|^2) \sign (f(Tx)) |\right)\right)^{\frac{1}{2}} \sign \left( h(x)  \sign (f(Tx)) \right)\\ 
&=& \left( r(x)\Phi \left( \Phi^{-1}(|f(Tx)|^2) \right)\right)^{\frac{1}{2}} \sign ( h(x) ) \sign (f(Tx))\\ 
&=& r(x)^{\frac{1}{2}}\sign ( h(x) )  f(Tx) 
\end{eqnarray*}
Hence the map $U\mapsto  \phi_{\Phi, t^2} \circ U \circ \phi_{t^2, \Phi}$ is linear. 
Furthermore, since $\Omega=T^{-1}\Omega$, we have 
\begin{eqnarray*}
\| \phi_{\Phi, t^2} \circ U \circ \phi_{t^2, \Phi} (f)\|^2_{L^2} 
&=& \int_{\Omega} |r(x)^{\frac{1}{2}}\sign ( h(x) ) f(Tx)  |^2 d\mu(x)  \\
&=& \int_{T^{-1}\Omega} |f(Tx) |^2 r(x) d\mu(x)  \\
&=& \int_{\Omega} |f(x) |^2 d\mu(x)  \\ 
&=& \|f \|^2_{L^2}. 
\end{eqnarray*}
If we define $(\phi_{\Phi, t^2} \circ U \circ \phi_{t^2, \Phi})^{-1} := \phi_{\Phi, t^2} \circ U^{-1} \circ \phi_{t^2, \Phi}$, 
then 
\begin{eqnarray*}
(\phi_{\Phi, t^2} \circ U \circ \phi_{t^2, \Phi})^{-1}\phi_{\Phi, t^2} \circ U \circ \phi_{t^2, \Phi} 
&=& \phi_{\Phi, t^2} \circ U^{-1} \circ \phi_{t^2, \Phi} \circ \phi_{\Phi, t^2} \circ U \circ \phi_{t^2, \Phi}  \\
&=& \operatorname{id}. 
\end{eqnarray*} 
Hence $\phi_{\Phi, t^2} \circ U \circ \phi_{t^2, \Phi} $ is an invertible linear isometry on the subspace $\{$simple functions$\}\subset L^2(\Omega, \mathbb{K})$. 
By extending the map $U\mapsto  \phi_{\Phi, t^2} \circ U \circ \phi_{t^2, \Phi}$ to the linear isometry on  $L^2(\Omega, \mathbb{K})$, we have the homomorphism from $O(L^\Phi(\Omega, \mathbb{K}))$ to $O(L^2(\Omega, \mathbb{K}))$. 
\end{proof}

\begin{theoremTtoTPhi}
Let $G$ be a locally compact second countable group, and $\Phi$ an N-function such that $\Phi \in \Delta_2^\Omega \cap \nabla_2^\Omega$, and  
\begin{center}
$(1)$ $\Omega=[0,1]$ and $\mathbb{K}=\mathbb{R}$,  \ \ or \ \ \
$(2)$ $\Omega=[0,1]$ and $\mathbb{K}=\mathbb{C}$,  \ \ or \ \ \ 
$(3)$ $\Omega=\mathbb{N}$ and $\mathbb{K}=\mathbb{C}$. 
\end{center}
If $G$ has Kazhdan's property $(T)$, then it has property $(T_{L^\Phi(\Omega,\mathbb{K})})$ with respect to gauge norm. 
\end{theoremTtoTPhi}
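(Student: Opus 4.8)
The plan is to conjugate the given isometric representation on $L^{\Phi}(\Omega,\mathbb{K})$ to a unitary representation on the Hilbert space $L^{2}(\Omega,\mathbb{K})$ by means of the generalized Mazur map, invoke Kazhdan's property $(T)$ there, and transport the conclusion back using the Hölder estimate of Theorem \ref{theorem:unifhomeo}. Concretely, let $\rho\colon G\to O(L^{\Phi}(\Omega,\mathbb{K}))$ be a linear isometric representation, set $B=L^{\Phi}(\Omega,\mathbb{K})$, and let $\pi(g):=\phi_{\Phi,t^{2}}\circ\rho(g)\circ\phi_{t^{2},\Phi}$, which by the Lemma preceding this theorem is a homomorphism $\pi\colon G\to O(L^{2}(\Omega,\mathbb{K}))$; its continuity is inherited from that of $\rho$ and of the Mazur maps, so $\pi$ is a linear isometric $G$-representation. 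From $\pi(g)\circ\phi_{\Phi,t^{2}}=\phi_{\Phi,t^{2}}\circ\rho(g)$ and the bijectivity of $\phi_{\Phi,t^{2}}$ one gets $\phi_{t^{2},\Phi}\big((L^{2})^{\pi(G)}\big)=B^{\rho(G)}$. Since $L^{2}(\Omega,\mathbb{K})$ is a real or complex Hilbert space, property $(T)$ is equivalent to $(T_{L^{2}(\Omega,\mathbb{K})})$ (as recalled in the introduction), so $\tilde{\pi}$ on $L^{2}/(L^{2})^{\pi(G)}$ does not almost have invariant vectors; equivalently, writing $P$ for the orthogonal projection onto $(L^{2})^{\pi(G)}$, there exist a compact $K\subset G$ and $\kappa>0$ with
\[
\max_{g\in K}\|\pi(g)\xi-\xi\|_{2}\ \ge\ \kappa\,\|\xi-P\xi\|_{2}\qquad\text{for all }\xi\in L^{2}(\Omega,\mathbb{K}).
\]

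Next, the main point: I claim $B^{\rho(G)}$ is the range of a norm-one $G$-equivariant \emph{linear} projection $E$ on $B$. The Banach--Lamperti form (\ref{eq:U=hT_1Orl}) presents $\rho$ as a cocycle of weighted composition operators $\rho(g)f=h_{g}\cdot(f\circ T_{g})$ over a measure-class-preserving $G$-action on $\Omega$, and (\ref{eq:trans-theta}) pins down the modulus $|h_{g}|$ in terms of the relevant Radon--Nikodym densities. Exactly as in the $L^{p}$ case of \cite{MR2316269}, one then identifies $B^{\rho(G)}$ as the range of a conditional-expectation-type averaging operator twisted by the unimodular cocycle $\sign(h_{g})$; this $E$ is a norm-one projection on $L^{\Phi}$ because $\Phi$ is convex (Jensen's inequality for averaging). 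Put $N:=\ker E$; it is $\rho$-invariant, and the quotient map restricts to a $G$-equivariant linear isomorphism $N\to B/B^{\rho(G)}$ with $\tfrac12\|v\|_{(\Phi)}\le\|v+B^{\rho(G)}\|\le\|v\|_{(\Phi)}$ for $v\in N$ (the lower bound: $\|E(v-w)\|_{(\Phi)}=\|w\|_{(\Phi)}$ for $w\in B^{\rho(G)}$, so $\|v\|_{(\Phi)}\le\|v-w\|_{(\Phi)}+\|w\|_{(\Phi)}\le 2\|v-w\|_{(\Phi)}$).

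Now suppose, for contradiction, that $\tilde{\rho}$ on $B/B^{\rho(G)}$ almost has invariant vectors. Transporting through the isomorphism of the previous paragraph, rescaling, and using the bi-Lipschitz bounds, we obtain $v_{n}\in N$ with $\|v_{n}\|_{(\Phi)}=1$ and $\max_{g\in K}\|\rho(g)v_{n}-v_{n}\|_{(\Phi)}\to 0$. Set $\xi_{n}:=\phi_{\Phi,t^{2}}(v_{n})$. As $\phi_{\Phi,t^{2}}$ maps the unit sphere of $L^{\Phi}$ onto that of $L^{2}$ (cf.\ (\ref{eq:Mazurmap})), $\|\xi_{n}\|_{2}=1$, and since $\rho(g)v_{n}$ and $v_{n}$ lie on this sphere $S_{\Phi}\subset A_{\Phi}$, Theorem \ref{theorem:unifhomeo} gives, for a suitable constant $C$ and exponent $\theta>0$,
\[
\|\pi(g)\xi_{n}-\xi_{n}\|_{2}=\|\phi_{\Phi,t^{2}}(\rho(g)v_{n})-\phi_{\Phi,t^{2}}(v_{n})\|_{2}\ \le\ C\,\|\rho(g)v_{n}-v_{n}\|_{(\Phi)}^{\theta}\ \longrightarrow\ 0
\]
uniformly for $g\in K$. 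Moreover $\|\xi_{n}-P\xi_{n}\|_{2}$ is bounded away from $0$: otherwise, along a subsequence there are $\eta_{n}\in(L^{2})^{\pi(G)}$ with $\|\xi_{n}-\eta_{n}\|_{2}\to 0$, hence $\phi_{t^{2},\Phi}(\eta_{n})\in B^{\rho(G)}$ while, by uniform continuity of $\phi_{t^{2},\Phi}$ on bounded sets (Theorem \ref{theorem:unifhomeo} together with (\ref{eq:Mazurmap})), $\|v_{n}-\phi_{t^{2},\Phi}(\eta_{n})\|_{(\Phi)}=\|\phi_{t^{2},\Phi}(\xi_{n})-\phi_{t^{2},\Phi}(\eta_{n})\|_{(\Phi)}\to 0$, contradicting $d(v_{n},B^{\rho(G)})\ge\tfrac12$. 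But then the $\xi_{n}$ violate the displayed Kazhdan-type inequality, a contradiction. Therefore $\tilde{\rho}$ does not almost have invariant vectors, i.e.\ $G$ has property $(T_{L^{\Phi}(\Omega,\mathbb{K})})$ with respect to the gauge norm.

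The step I expect to be the main obstacle is the construction of the norm-one $G$-equivariant linear projection onto $B^{\rho(G)}$ from the Banach--Lamperti data, uniformly across the three cases $(1)$--$(3)$ --- in particular, handling the modulus $|h_{g}|$ through (\ref{eq:trans-theta}) and the unimodular phase cocycle $\sign(h_{g})$ correctly, and dealing with orbits on which no nonzero fixed vector exists. Everything else is the formal ``conjugate, apply $(T)$ for $L^{2}$, conjugate back'' scheme, with the required quantitative control supplied by Theorem \ref{theorem:unifhomeo} and (\ref{eq:Mazurmap}).
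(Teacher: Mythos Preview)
Your overall architecture---conjugate $\rho$ to a unitary $\pi$ on $L^2$ via the generalized Mazur map, invoke property $(T)$ there, and transport back using Theorem~\ref{theorem:unifhomeo} and (\ref{eq:Mazurmap})---is exactly the paper's. The final step, showing that the $\xi_n$ stay bounded away from $(L^2)^{\pi(G)}$ by applying the H\"older estimate for $\phi_{t^2,\Phi}$ on the annulus, is also essentially identical to what the paper does.

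The substantive divergence is in how you pass from almost invariant vectors in $B/B^{\rho(G)}$ to almost invariant \emph{unit} vectors in $B$ lying at definite distance from $B^{\rho(G)}$. You want a norm-one $G$-equivariant linear projection $E$ onto $B^{\rho(G)}$ built from the Banach--Lamperti data, and you correctly flag this as the main obstacle; indeed you do not construct it. Producing $E$ as a twisted conditional expectation is plausible (Jensen gives contractivity of conditional expectations in the gauge norm), but identifying $B^{\rho(G)}$ with the range of such an operator---accounting for the phase cocycle $\sign(h_g)$, the modulus relation (\ref{eq:trans-theta}), and ergodic components on which the only fixed vector is $0$---is real work that is neither done here nor supplied by the paper, so as written this is a gap.

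The paper sidesteps the projection entirely with an elementary iteration trick. Given $\tilde f_n$ of quotient norm $1$ and $h_{g,n}\in B^{\rho(G)}$ with $\|\rho(g)f_n-f_n-h_{g,n}\|<1/n^2$, one applies $\rho(g^{i-1})$ and telescopes to get $\|\rho(g^n)f_n-f_n-nh_{g,n}\|<1/n$. Choosing $h_n\in B^{\rho(G)}$ with $1\le\|f_n-h_n\|\le 1+1/n^2$ forces $\|\rho(g^n)f_n-f_n\|\le 2+2/n^2$, whence $\|h_{g,n}\|=O(1/n)$ and therefore $\|\rho(g)f_n-f_n\|=O(1/n)$. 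Thus the normalized $f_n'-h_n'=(f_n-h_n)/\|f_n-h_n\|$ are unit vectors in $B$, almost invariant under $\rho$, with quotient norm $\ge (1+1/n^2)^{-1}\ge 1/2$; one then feeds these directly into $\phi_{\Phi,t^2}$. This argument uses nothing beyond the isometry of $\rho$ and the definition of the quotient norm, and it replaces your unproven projection step cleanly. If you want to complete your route, you must actually build $E$; otherwise, the iteration argument is both simpler and fully rigorous.
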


\begin{proof}
Assume $G$ does not have property $(T_{L^\Phi(\Omega, \mathbb{K})})$. 
Write $B=L^\Phi(\Omega, \mathbb{K})$ and $H=L^2(\Omega, \mathbb{K})$ $(H=L^2(\Omega \times \{1,\sqrt{-1}\}, \mathbb{R})$ if $\mathbb{K}=\mathbb{C}$). 
Then there is a linear isometric $G$-representation $\rho:G\to O(B)$ so that the quotient representation $\tilde{\rho}: G\to O(B/B^{\rho(G)})$ almost has invariant vectors, 
i.e. for any compact subset $K \subset G$ and $n\in \mathbb{N}$, there exist unit vectors $\tilde{f}_n = f_n + B^{\rho(G)} \in \tilde{B} := B/B^{\rho(G)}$ so that 
$$\max_{g \in K} \|\tilde{\rho}(g )\tilde{f}_n-\tilde{f}_n\|_{\tilde{B}} 
= \max_{g \in K} \inf_{h\in B^{\rho(G)}} \|\rho (g )f_n-f_n -h\|_B <\frac{1}{n^2}  . $$ 
Hence for each $g \in K$ and $n\in \mathbb{N}$, there is  $h_{g,n} \in B^{\rho(G)}$ such that 
$$ \|(\rho (g )f_n-f_n) -h_{g,n}\|_B <\frac{1}{n^2}  . $$ 
Thus for $i\in \mathbb{N}$ we have 
$$ \|(\rho(g^i) f_n-\rho(g^{i-1})f_n) -h_{g,n} \|_B = \|\rho(g^{i-1}) (\rho (g )f_n-f_n -h_{g,n}) \|_B <\frac{1}{n^2}  . $$ 
Hence 
$$ \|(\rho(g^n) f_n-f_n) -nh_{g,n} \|_B \le \sum_{i=1}^n \|(\rho(g^i) f_n-\rho(g^{i-1})f_n) -h_{g,n} \|_B <\frac{1}{n}  . $$ 
On the other hand, since $\|\tilde{f}_n\|_{\tilde{B}}=1$, there is $h_n \in B^{\rho(G)}$ such that $$1\le \|f_n - h_n\|_B\le 1+ \frac{1}{n^2}.$$ 
Hence 
$$\|\rho(g^n) f_n-f_n\|_B \le \|\rho(g^n) f_n- \rho(g^n) h_n\|_B + \|f_n-h_n\|_B \le 2 + \frac{2}{n^2}.$$
Since  
\begin{eqnarray*}
n \|h_{g,n} \|_B 
&=& \|n h_{g,n} \|_B  \\ 
&\le & \|(\rho(g^n) f_n-f_n) -nh_{g,n} \|_B + \|\rho(g^n) f_n-f_n\|_B \\ 
&\le & 2 + \frac{1}{n} + \frac{2}{n^2}, 
\end{eqnarray*}
we get 
$$ \|\rho (g )f_n-f_n\|_B \le \|(\rho (g )f_n-f_n) -h_{g,n}\|_B + \|h_{g,n}\|_B \le \frac{2}{n} + \frac{2}{n^2} + \frac{2}{n^3}  \le \frac{6}{n}. $$
Thus the sequences $\{f'_n= \frac{f_n}{\|f_n - h_n\|_B}\}_{n\in \mathbb{N}} \subset B$, $\{h'_n = \frac{h_n}{\|f_n - h_n\|_B}\}_{n\in \mathbb{N}} \subset B^{\rho(G)}$ satisfy $\|f'_n - h'_n\|_B = 1$ and 
\begin{eqnarray*}
\max_{g\in K} \|\rho (g )(f'_n-h'_n)-(f'_n-h'_n)\|_B 
&=& \frac{\max_{g\in K} \|\rho (g )(f_n-h_n)-(f_n-h_n)\|_B}{\|f_n - h_n\|_B} \\ 
&\le & \frac{6}{n}. 
\end{eqnarray*}
Let us then define $\pi: G\to O(\mathcal{H})$ by $\pi(g ) = \phi_{\Phi, t^2} \circ \rho(g ) \circ \phi_{t^2, \Phi} $. 
Then $\phi_{\Phi,t^2} $ maps $B^{\rho(G)}$ onto $\mathcal{H}^{\pi(G)}$.  
Let $v_n := \phi_{\Phi,t^2}(f'_n-h'_n)$. 
Then $\|v_n\|_H=1$ and by Theorem \ref{theorem:unifhomeo} there are $0<\alpha <\infty$ and a constant $C>0$ such that 
\begin{eqnarray*}
\max_{g\in K} \|\pi (g )v_n-v_n\|_H  
&=& \max_{g\in K} \|\phi_{\Phi,t^2}(\rho (g )(f'_n-h'_n))-\phi_{\Phi,t^2}(f'_n-h'_n)\|_H \\ 
&\le & C \left(\frac{6}{n}\right)^{1\wedge \alpha}. 
\end{eqnarray*}
From the inequalities (\ref{eq:Mazurmap}), there is $\delta > 0$ such that if $1-\delta \le \|u\|_H \le 1+\delta $, then $\frac{1}{2} \le \|\phi_{t^2,\Phi}(u)\|_H \le \frac{3}{2}$. 
For $n\in \mathbb{N}$ and $u\in H^{\pi(G)}$,  
if $  \|u\|_H < 1-\delta $, then 
$$\| v_n -u\|_H \ge \| v_n \|_H - \|u\|_H > \delta ,  $$
if $  \|u\|_H > 1+\delta $, then 
$$\| v_n -u\|_H \ge \|u\|_H - \| v_n \|_H > \delta ,  $$
if $1-\delta \le \|u\|_H \le 1+\delta $, then 
\begin{eqnarray*}
C \| v_n -u\|_H ^{1\wedge \alpha }
&\ge & \|\phi_{t^2,\Phi}(v_n) -\phi_{t^2,\Phi}(u)\|_B \\
&=& \|f'_n -h'_n -\phi_{t^2,\Phi}(u)\|_B \\ 
&\ge & \inf_{h\in B^{\rho(G)}}\|f'_n -h\|_B \\ 
&=& \inf_{h\in B^{\rho(G)}} \left\| \frac{f_n}{\|f_n - h_n\|_B} -h \right\|_B \\ 
&=& \frac{\inf_{h\in B^{\rho(G)}}\|f_n-h\|_B }{\|f_n - h_n\|_B} \\ 
&\ge & \frac{1}{1+\frac{1}{n^2}} 
\ge \frac{1}{2}\\ 
\end{eqnarray*}
for some $C> 0$ and $0< \alpha < \infty$. 
That is, there is a constant $\delta' >0$ such that for all $n \in \mathbb{N}$ 
$$ \inf_{u\in H^{\pi(G)}}\|v_n-u \|_H \ge \delta' .$$
Let $w_n$ denote the projection of $v_n$ to $H' = (H^{\pi(G)})^{\perp}$. 
Then $\|w_n\|_{H}  \ge \delta' >0$ for all $n$ and 
$$\max_{g \in K}\left\|\pi(g ) \frac{w_n}{\|w_n\|_H} - \frac{w_n}{\|w_n\|_H} \right\|_H  
\le \max_{g \in K}\frac{1}{\delta'}\|\pi(g )v_n -v_n\|_H 
\to 0 \ \ \ \text{ as } n \to \infty .$$ 
Thus the restriction $\pi'$ of $\pi $ to $H'$ does not $G$-invariant vectors, but almost does. 
Hence $G$ does not have Kazhdan's property $(T)$. 
\end{proof}

\section{Proof of Theorem \ref{thm:TtoFPhi}}

\begin{theoremTtoFPhi} 
If a locally compact second countable topological group $G$ has property $(T)$, then there exists a constant $\epsilon (G)>0$ such that $G$ has property $(F_{B})$ for every real $($or complex$)$ Banach spaces $B$ with $\delta_B(t) \ge \delta_{L^{2+\epsilon(G)}}(t)$ for all $0<t<2$ $($or  $\rho_B(t) \le \rho_{L^{2+\epsilon(G)}}(t)$ for all $t>0)$. 
\end{theoremTtoFPhi}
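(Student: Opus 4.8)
The plan is to follow the strategy by which \cite{MR2316269} establishes property $(F_{L^p})$ for $p$ near $2$, observing that that argument uses the $L^p$-structure only through the modulus of convexity estimate (respectively the modulus of smoothness estimate), so it applies to any $B$ satisfying the stated hypothesis once $\epsilon(G)$ has been chosen suitably. Let $\alpha$ be a continuous affine isometric $G$-action on $B$ and write $\alpha(g)v=\pi(g)v+b(g)$, with $\pi:G\to O(B)$ the linear part and $b\in Z^1(G,\pi)$; then $\alpha$ has a fixed point iff $b$ is a coboundary, iff the orbit $\{b(g):g\in G\}$ of $0$ is bounded. Since $\delta_B(t)\ge\delta_{L^{2+\epsilon(G)}}(t)>0$ for $0<t<2$ (resp. $\rho_B(t)\le\rho_{L^{2+\epsilon(G)}}(t)$ for all $t>0$), $B$ is uniformly convex (resp. uniformly smooth), hence reflexive. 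By reflexivity there is a $\pi(G)$-equivariant norm-one projection $P:B\to B^{\pi(G)}$ (as in \cite{MR2316269}); then $b_0:=P\circ b$ is a continuous homomorphism from $G$ into the abelian group $(B^{\pi(G)},+)$, and since property $(T)$ makes $G/\overline{[G,G]}$ compact the image of $b_0$ is a compact subgroup of a topological vector space, hence $b_0\equiv 0$. Replacing $b$ by $(I-P)b$ we may assume $B^{\pi(G)}=0$; and since in a complete uniformly convex space any bounded set has a unique Chebyshev centre, which for a bounded $G$-invariant orbit closure is $G$-fixed, it suffices to bound the orbit of $0$.

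The second ingredient is a uniform spectral gap for the linear part. Property $(T)$ furnishes a compact generating set $Q\subset G$ and $\kappa_0>0$ forming a Kazhdan pair for all unitary $G$-representations; one transfers this to the isometric $G$-representation $\pi$ on $B$, obtaining $Q$ and a constant $\kappa=\kappa(G,\epsilon(G))>0$ with $\max_{g\in Q}\|\pi(g)w-w\|\ge\kappa\|w\|$ for all $w\in B$. This is the step in which the modulus hypothesis on $B$ is used, playing the role that the generalized Mazur map of Section 4 plays in the $L^p$ case of \cite{MR2316269}; the essential point is that $\kappa$ can be made to depend only on $G$ and on the admissible class determined by $\epsilon(G)$, and to stay bounded below as $\epsilon(G)\downarrow 0$ (when the admissible class shrinks toward Hilbert spaces).

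With this in hand I would run the usual contraction scheme: an iteration of Chebyshev centres of finite (more precisely, $Q$-indexed) sub-orbits. Set $c_0=0$ and let $c_{n+1}$ be the unique minimiser of the convex function $w\mapsto\max_{g\in Q}\|\alpha(g)c_n-w\|$. Using $\alpha(g)c_{n+1}-\alpha(g)c_n=\pi(g)(c_{n+1}-c_n)$, the spectral gap above, and the quantitative convexity inequality coming from $\delta_B\ge\delta_{L^{2+\epsilon(G)}}$, one estimates the displacements $D_n:=\max_{g\in Q}\|\alpha(g)c_n-c_n\|$ and shows $D_{n+1}\le\theta D_n$ and $\|c_{n+1}-c_n\|\le CD_n$ for constants $C<\infty$ and $\theta<1$ depending only on $\kappa$ and $\delta_B$. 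One first fixes $Q,\kappa_0$ from property $(T)$ and then picks $\epsilon(G)>0$ small enough that $\delta_{L^{2+\epsilon(G)}}$, and hence $\delta_B$, is large enough relative to the resulting $\kappa$ for $\theta<1$; this is possible precisely because $\delta_{L^{2+\epsilon(G)}}\uparrow\delta_{L^2}$ while $\kappa$ remains bounded below. Then $(c_n)$ is Cauchy, its limit is fixed by $Q$ hence by $G$, the orbit of $0$ is bounded, and $G$ has property $(F_B)$. Under the uniformly smooth hypothesis $\rho_B\le\rho_{L^{2+\epsilon(G)}}$ one runs the same scheme with the modulus-of-smoothness estimates in place of the convexity ones (equivalently, after passing to $B^{*}$, whose modulus of convexity is controlled by $\rho_B$).

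The main obstacle is the spectral-gap transfer of the second paragraph: producing a uniform Kazhdan-type constant $\kappa$ for an isometric representation on a \emph{general} Banach space subject only to the modulus bound, since the generalized Mazur map is available only for Orlicz (more generally, rearrangement-invariant) spaces. One must instead transfer the constant using the modulus of convexity (resp. smoothness) of $B$ directly, with estimates uniform over the admissible class; this is exactly the step that degenerates when the modulus is too weak, in agreement with the failure of $(F_{\ell^p})$ for large $p$, and the quantitative hypothesis on $\delta_B$ (resp. $\rho_B$) is tailored to make it survive. A secondary point is to keep the dependence of $\theta$ on $(\kappa,\delta_B)$ in the contraction step explicit enough to pin down a genuine $\epsilon(G)>0$.
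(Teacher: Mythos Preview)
You have correctly located the difficulty, and you are honest that the spectral-gap transfer in your second paragraph is ``the main obstacle''. Unfortunately that obstacle is not cosmetic: for a general Banach space $B$ there is no Mazur-type map to a Hilbert space, and I do not know any direct argument that produces, from property~$(T)$ alone, a constant $\kappa>0$ with $\max_{g\in Q}\|\pi(g)w-w\|\ge\kappa\|w\|$ for every isometric representation on every $B$ with $\delta_B\ge\delta_{L^{2+\epsilon}}$. Your proposal asserts such a $\kappa$ exists and is ``bounded below as $\epsilon(G)\downarrow 0$'', but gives no mechanism for proving it; without that, the contraction scheme never gets off the ground.

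The paper (following \cite{MR2316269}) sidesteps this entirely by an ultraproduct contradiction. One proves directly the halving lemma: there exist $\epsilon(G)>0$ and $C<\infty$ such that for any admissible $B$, any affine isometric action $\alpha$, and any $x\in B$, there is $y$ with $\|x-y\|\le C\max_{g\in K}\|\alpha(g,x)-x\|$ and $\max_{g\in K}\|\alpha(g,y)-y\|\le\tfrac12\max_{g\in K}\|\alpha(g,x)-x\|$. If this fails, one gets for each $n$ a space $B_n$ with $\delta_{B_n}\ge\delta_{L^{2+1/n}}$, an action $\alpha_n$, and a point $x_n$ of displacement $1$ such that no point within distance $n$ has displacement $\le\tfrac12$. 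Take the ultraproduct $B_\omega$ based at $(x_n)$; the key computation is that $\delta_{B_\omega}(\epsilon)\ge\delta_{L^{2+1/m}}(\epsilon)$ for every $m$, hence $\delta_{B_\omega}\ge\delta_{L^2}$, which forces $B_\omega$ to be a Hilbert space. The actions $\alpha_n$ induce an affine isometric $G$-action on $B_\omega$ with no fixed point, contradicting $(FH)\Leftrightarrow(T)$. Once the halving lemma is in hand, the iteration $x_{n+1}=y(x_n)$ gives a Cauchy sequence converging to a fixed point, exactly as in your last step. Thus no spectral gap on $B$ is ever needed; the modulus hypothesis is used only to guarantee that the ultralimit is Hilbert.
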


\begin{proof}
Since $G$ is a locally compact second countable topological group with Kazhdan's property $(T)$, it is compactly generated. 
Fix a compact generating subset $K$ with non-empty interior of $G$. 

\begin{lemma}
There exist a constant $\epsilon(G) >0$  and $C<\infty$ such that for any real $($or complex$)$ Banach space $(B,\|\ \|)$ with $\delta_B(t) \ge \delta_{L^{2+\epsilon(G)}}(t)$ for all $0<t<2$ $($or $\rho_B(t) \le \rho_{L^{2+\epsilon(G)}}(t)$ for all $t>0)$, any affine isometric action $\alpha$ of $G$ on $B$, and any point $x\in B$ with $\max_{g\in K}\|\alpha(g,x) -x\|>0$, there exists a point $y\in B$ with 
$$\|x-y\| \le C \max_{g\in K}\|\alpha(g,x) -x\|, \ \ \ \max_{g\in K}\|\alpha(g,y) -y\| \le \frac{\max_{g\in K}\|\alpha(g,x) -x\|}{2}.$$
\end{lemma}

\begin{proof}
By contradiction, we assume for any $n\in \mathbb{N}$ there exist 
Banach spaces $(B_n,\|\ \|_n)$ with $\delta_{B_n}(t) \ge \delta_{L^{2+\frac{1}{n}}}(t)$ for all $0<t<2$, 
affine isometric $G$-actions $\alpha_n$ on $B_n$, and 
points $x_n \in B_n$ such that, after a rescaling to achieve 
$\max_{g\in K}\|\alpha_n(g,x_n) -x_n\|_n =1$, 
$$ \max_{g\in K}\|\alpha_n(g,y) -y\|_n > \frac{1}{2} $$
for all $y \in B_n $ with $\|y-x_n\|_n \le n$.  

Let $\omega $ be a non-principal ultrafilter. 
Set $(B_{\omega }, \|\ \|_{\omega }) $ be the ultraproduct of the spaces $(B_n,\|\ \|_n)$ with the marked points $x_n$.  
For $0< \epsilon \le 2$, we take $u,v\in B_{\omega }$ with $\|u\|_{\omega } = \|v\|_{\omega } =1$ and $\|u-v\|_{\omega } \ge \epsilon $. 
Let $(u_n)$ be the representatives of $u$, $(v_n)$ of $v$, and $0<\eta <\epsilon $.  
Then by the uniform convexity 
\begin{eqnarray*} 
&& \{n\in \mathbb{N} : |\ \|u_n\|_n -1|< \eta , \ |\ \|v_n\|_n -1| < \eta ,\ |\|u-v\|_{\omega } - \|u_n-v_n\|_n|<\eta \} \\ 
&\subset & \{n\in \mathbb{N} : |\ \|u_n\|_n -1|< \eta , \ |\ \|v_n\|_n -1| < \eta ,\ \|u_n-v_n\|_n > \epsilon -\eta \} \\ 
&\subset& \left\{ n\in \mathbb{N} : 
\frac{\|u_n\|_n }{1+\eta }< 1 ,\ 
\frac{\|v_n\|_n}{1+\eta} < 1 ,\ 
\frac{\|u_n-v_n\|_n}{1+\eta} > 
\frac{\epsilon -\eta }{1+\eta } \right\} \\
&\subset & \left\{n\in \mathbb{N} : \frac{\|u_n + v_n\|_n }{2(1+\eta )} \le 
1-\delta_{B_n} 
\left(\frac{\epsilon -\eta }{1+\eta }\right) \right\} . 
\end{eqnarray*} 
Since the set at the top line is in $\omega$, the set at the bottom line is also in $\omega$.  
Since $\delta_{L^{2+\frac{1}{m}}} \le \delta_{L^{2+\frac{1}{n}}} $ for $m \le n$, for fixed $m\in \mathbb{N}$ we have 
\begin{eqnarray*} 
\left\{n\in \mathbb{N} : \frac{\|u_n + v_n\|_n }{2(1+\eta )} \le 
1-\delta_{L^{2+\frac{1}{m}}}  
\left(\frac{\epsilon -\eta }{1+\eta }\right) \right\} \cap \{n\in \mathbb{N} : n\ge m \}\in \omega . 
\end{eqnarray*} 
Since $\eta $ is arbitrary, we have 
\begin{eqnarray*}
\frac{\|u + v \|_{\omega } }{2} 
\le \inf_{0<\eta <\epsilon  } (1+\eta )\left(1-\delta_{L^{2+\frac{1}{m}}} \left(\frac{\epsilon -\eta }{1+\eta }\right) \right) 
= 1-\delta_{L^{2+\frac{1}{m}}} (\epsilon ), 
\end{eqnarray*}
that is, 
$\delta _{B_{\omega }}(\epsilon )\ge \delta_{L^{2+\frac{1}{m}}} (\epsilon ) $ for $0< \epsilon \le 2$. 
Since $m$ is also arbitrary, we have $\delta _{B_{\omega }}(\epsilon )\ge \delta_{L^2} (\epsilon ) $ for $0< \epsilon \le 2$. 
This means $(B_\omega,\|\ \|_\omega)$ is a Hilbert space (see p.410 in \cite{MR1727673} and Theorem 7.2. in \cite{MR0022312}). 
Since $G$ is generated by $K$ and $\max_{g\in K}\|\alpha(g,x_n)-x_n\|_n=1$, we obtain an affine isometric $G$-action $\alpha_\omega$ on the Hilbert space $B_\omega$ from $\alpha_n$. 
Then, by the assumption, for any $y\in B_\omega$, we have 
$$ \max_{g\in K}\|\alpha_\omega(g,y) -y\|_\omega \ge  \frac{1}{2} , $$
that is, $\alpha_\omega $ has no fixed point. 
This means $G$ does not have property $(FH)$, hence contradicting the property $(T)$ of $G$. 
For $B$ with $\rho_B(t) \le \rho_{L^{2+\epsilon(G)}}(t)$ for all $t>0$, we can prove in a similar manner. 
\end{proof}

Let $\alpha $ be an arbitrary affine isometric $G$-action on a Banach space $B$ with $\delta_B(t) \ge \delta_{L^{2+\epsilon(G)}}(t)$ for all $0<t<2$ $($or $\rho_B(t) \le \rho_{L^{2+\epsilon(G)}}(t)$ for all $t>0)$. 
Define a sequence $x_n \in B$ inductively, starting from an arbitrary $x_0 \in B$. 
Given $x_n$, let $R_n = \max_{g\in K} \| \alpha(g,x_n) -x_n\|$. 
Then, applying the lemma, there exists $x_{n+1}\in B$ with $$ \| x_n - x_{n+1}\|\le CR_n$$ so that 
$$R_{n+1} = \max_{g\in K} \| \alpha(g,x_{n+1}) -x_{n+1}\| \le \frac{R_n}{2}.$$
We get $R_n \le R_0/2^n$ and 
$$\sum _{n=1}^\infty \|x_{n+1}-x_n \|_B\le CR_0 \sum _{n=1}^\infty  \frac{1}{2^n}=CR_0 < \infty.  $$
The limit of the Cauchy sequence $\{x_n\}_{n=1}^\infty$ is a $G$-fixed point of $\alpha$. 
\end{proof}

\section{Proof of Theorem \ref{thm:notFPhi}}

\begin{theoremnotFPhi}
Let $\Gamma $ be a hyperbolic group and $\mathbb{K} = \mathbb{R}$ or $\mathbb{C}$. 
Then there exists $2 \le p=p(\Gamma) < \infty$ such that,  
for any N-functions $\Phi$ and $\Psi$ satisfying 
\begin{itemize} 
\item $\Phi \in \Delta_2^{\mathbb{N}}$, and   
\item there is a constant $D>0$ and $t_0>0$ such that $\Psi (t) \le Dt^p$ for all $0< t \le t_0$, 
\end{itemize}
$\Gamma $ admits a proper affine isometric action on the $\ell^\Psi(\Gamma,\ell^\Phi(\Gamma, \mathbb{K}))$ space with gauge norm, where  
$$\ell^\Psi(\Gamma,\ell^\Phi(\Gamma,\mathbb{K})) = \left\{ \xi :\Gamma  \to \ell^{\Phi}(\Gamma, \mathbb{K}) \mid \sum_{\gamma \in \Gamma } \Psi \left(a \|\xi(\gamma)\|_{(\Phi)}\right) < \infty \text{ for some } a>0 \right\}   $$
with gauge norm 
$$\|\xi \|_{(\Psi\Phi)} = \inf \left\{ b>0 \mid \sum_{\gamma \in \Gamma } \Psi \left(\frac{\|\xi(\gamma)\|_{(\Phi)}}{b}\right)\le 1 \right\}. $$
\end{theoremnotFPhi}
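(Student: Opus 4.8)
The plan is to build an isometric representation $\pi$ of $\Gamma$ on $\ell^{\Psi}(\Gamma,\ell^{\Phi}(\Gamma,\mathbb{K}))$ together with a proper $1$-cocycle $b\colon\Gamma\to\ell^{\Psi}(\Gamma,\ell^{\Phi}(\Gamma,\mathbb{K}))$; then $g\mapsto\pi(g)(\,\cdot\,)+b(g)$ is the required proper affine isometric action. (If $\Gamma$ is finite the statement is trivial, so assume $\Gamma$ infinite.) The cocycle is a \emph{localized} form of the one Yu uses in \cite{MR2221161}: let $X=(V,E)$ be the Cayley graph of $\Gamma$ for a fixed finite generating set and let $(x,y)\mapsto q(x,y)$ be Mineyev's $\Gamma$-equivariant homological bicombing (as exploited in \cite{MR2221161}) — a $1$-chain on $X$ with $\partial q(x,y)=y-x$, $\gamma_{*}q(x,y)=q(\gamma x,\gamma y)$, supported within bounded distance of the geodesics $[x,y]$ and decaying exponentially (at a rate $\epsilon>0$ governed by the hyperbolicity constant) away from them, and satisfying the \emph{consistency} estimate that $q(x,y)$ and $q(x,z)$ agree on $B(x,R)$ up to $\ell^{1}$-error $\le C_{R}\exp(-\epsilon(y|z)_{x})$, where $(y|z)_{x}$ is the Gromov product. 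Fixing a radius $R_{0}$ once and for all, put, for $\gamma\in\Gamma$ and $x\in\Gamma=V$,
\[
b(\gamma)(x):=\bigl(q(x,e)-q(x,\gamma)\bigr)\big|_{B(x,R_{0})},
\]
a finitely supported $\mathbb{K}$-valued function on $E$, hence an element of $\ell^{\Phi}(E)\cong\ell^{\Phi}(\Gamma,\mathbb{K})$ (fix a bijection of the countable sets $E$ and $\Gamma$), and define $\pi(\gamma)\xi(x):=\gamma_{*}(\xi(\gamma^{-1}x))$.

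Two formal points are routine. Each $\gamma_{*}$ is a coordinate permutation of $E$, hence an isometry of $(\ell^{\Phi}(E),\|\cdot\|_{(\Phi)})$ for every Young function $\Phi$; re-indexing the outer variable by $x\mapsto\gamma^{-1}x$ then makes $\pi(\gamma)$ an isometry of $(\ell^{\Psi}(\Gamma,\ell^{\Phi}(E)),\|\cdot\|_{(\Psi\Phi)})$, so $\pi$ is an isometric representation. And $b$ is a $1$-cocycle for $\pi$: since $\gamma_{1*}$ carries $B(\gamma_{1}^{-1}x,R_{0})$ isometrically onto $B(x,R_{0})$ and $\gamma_{1*}q(y,z)=q(\gamma_{1}y,\gamma_{1}z)$, one computes $\pi(\gamma_{1})b(\gamma_{2})(x)=(q(x,\gamma_{1})-q(x,\gamma_{1}\gamma_{2}))|_{B(x,R_{0})}$, which added to $b(\gamma_{1})(x)=(q(x,e)-q(x,\gamma_{1}))|_{B(x,R_{0})}$ telescopes to $b(\gamma_{1}\gamma_{2})(x)$.

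The substantive part is two estimates on the fibers $b(\gamma)(x)$, and this is where the hypotheses on $\Phi,\Psi$ enter. By the consistency estimate (with $R=R_{0}$) together with convexity of $\Phi$ — here $\Phi\in\Delta_{2}^{\mathbb{N}}$ is used only in a minor way, essentially to make $\ell^{\Phi}(\Gamma,\mathbb{K})$ separable — one gets a uniform bound
\[
\|b(\gamma)(x)\|_{(\Phi)}\ \le\ C_{\Phi}\exp\!\bigl(-\epsilon(e|\gamma)_{x}\bigr)\qquad(\le C_{\Phi}),
\]
so $\|b(\gamma)(x)\|_{(\Phi)}$ decays exponentially in the Gromov product $(e|\gamma)_{x}$; since $\{x:(e|\gamma)_{x}\le t\}$ is coarsely a tube of radius $t$ about a geodesic $[e,\gamma]$, it has cardinality $\lesssim(|\gamma|+1)\exp(ht)$, where $h$ is the exponential growth rate of $\Gamma$. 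Hence, for $c\ge C_{\Phi}/t_{0}$ and using the hypothesis $\Psi(s)\le Ds^{p}$ for $s\le t_{0}$,
\[
\sum_{x\in\Gamma}\Psi\!\left(\frac{\|b(\gamma)(x)\|_{(\Phi)}}{c}\right)\ \le\ \frac{DC_{\Phi}^{p}}{c^{p}}\sum_{x\in\Gamma}\exp\!\bigl(-p\epsilon(e|\gamma)_{x}\bigr)\ \lesssim\ \frac{|\gamma|}{c^{p}}\sum_{t\ge0}\exp\!\bigl((h-p\epsilon)t\bigr),
\]
which is finite exactly when $p\epsilon>h$; this is the inequality defining $p(\Gamma)$, and the right-hand side is then $\lesssim|\gamma|/c^{p}\le1$ for $c$ large, so $b(\gamma)\in\ell^{\Psi}(\Gamma,\ell^{\Phi}(\Gamma,\mathbb{K}))$. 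For properness, for each $\gamma$ there are at least $\sim|\gamma|$ vertices $x$ on a geodesic $[e,\gamma]$ at which $(e|\gamma)_{x}$ is bounded and the flows $q(x,e)$, $q(x,\gamma)$ genuinely separate within $B(x,R_{0})$, forcing $\|b(\gamma)(x)\|_{(\Phi)}\ge c_{1}>0$ there; discarding all other (nonnegative) terms from the defining infimum for $\|\cdot\|_{(\Psi\Phi)}$ gives
\[
\|b(\gamma)\|_{(\Psi\Phi)}\ \ge\ \inf\{c>0:\ N(\gamma)\,\Psi(c_{1}/c)\le1\}\ =\ \frac{c_{1}}{\Psi^{-1}\!\bigl(1/N(\gamma)\bigr)},
\]
where $N(\gamma)\gtrsim|\gamma|$; since $\Psi^{-1}(s)\to0$ as $s\to0^{+}$ for any N-function and $\Gamma$ has finitely many elements of each length, $\|b(\gamma)\|_{(\Psi\Phi)}\to\infty$ as $\gamma\to\infty$, i.e. the action is proper.

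The step I expect to be the main obstacle is not the Orlicz-space bookkeeping — once the fibers are truncated to $B(x,R_{0})$, the two displayed estimates are essentially the whole argument — but assembling Mineyev's bicombing (equivalently, Yu's sequence of approximate bicombings) in the precise form stated, in particular with the consistency estimate, which is simultaneously responsible for the cocycle identity and for the decay bound on $\|b(\gamma)(x)\|_{(\Phi)}$; the value $p(\Gamma)$ is then forced by the competition between the growth rate $h$ of $\Gamma$ and the decay rate $\epsilon$ of the bicombing, exactly as in \cite{MR2221161}.
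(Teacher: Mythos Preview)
Your overall strategy is the paper's (and Yu's): build a $1$-cocycle out of Mineyev's construction, bound each fiber's $\ell^{\Phi}$-norm by an exponential in the Gromov product, sum against exponential volume growth using the hypothesis $\Psi(t)\le Dt^{p}$ near $0$ (this is what fixes $p(\Gamma)$), and read off properness from a pointwise lower bound at vertices along $[e,\gamma]$.

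The one substantive difference is precisely the point you flag as the obstacle. You work with the $1$-chain bicombing $q$ and truncate to $B(x,R_0)$, which forces you to postulate a ``localized consistency estimate'' $\bigl\|(q(x,y)-q(x,z))|_{B(x,R)}\bigr\|_{1}\le C_R e^{-\epsilon(y|z)_x}$; bounded area alone gives only a uniform bound here, not exponential decay, so this is genuinely an extra lemma. The paper avoids this entirely by citing Mineyev's $0$-chain ``flow'' function $f(b,a)\in C_0(\Gamma,\mathbb{L})$ directly: it is already a convex combination supported in $B(q[b,a](10\delta),\delta)\cap S(b,10\delta)$, and Mineyev's theorem gives the needed decay $\|f(b,a)-f(b,a')\|_{1}\le L\lambda^{(a|a')_b}$ verbatim. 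No truncation, no auxiliary consistency statement. The finite support lets one pass from $\ell^{1}$ to $\ell^{\Phi}$ estimates by elementary inequalities (and an Orlicz H\"older inequality for the lower bound on $\|f(b,a)\|_{(\Phi)}$). The paper then normalises $h(b,a)=f(b,a)/\|f(b,a)\|_{(\Phi)}$; this costs only a constant in the decay estimate but makes properness clean: for $\gamma$ on $q[g,e]$ at distance $\ge 10\delta$ from both endpoints the supports of $h(\gamma,g)$ and $h(\gamma,e)$ are disjoint, so $\|h(\gamma,g)-h(\gamma,e)\|_{(\Phi)}\ge 1$, whence $\|\pi(g)\eta-\eta\|_{(\Psi\Phi)}\ge 1/\Psi^{-1}\!\bigl(1/(d(g,e)-100\delta)\bigr)$.

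In short: swap the truncated $q$ for Mineyev's $f$ and normalise, and your outline becomes the paper's proof; what each buys is that the paper's route turns your ``main obstacle'' into a citation.
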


\begin{proof}
Since $\Gamma $ is a countable set with counting measure, we can identify $\Gamma $ and $\mathbb{N}$ as measure spaces.  
Let $S$ be a finite generating set of $\Gamma$ and $G$ a Cayley graph of $\Gamma$ with respect to $S$.  
We endow $G$ the path metric $d$, and identify $\Gamma $ with the set of vertices of $G$. 
Let $\delta \ge 1$ be a positive integer such that all the geodesic triangles in $G$ are $\delta$-fine. 
Set $B(x,R) = \{a \in \Gamma \mid d(a,x) \le R\}$ and $S(x,R) = \{a \in \Gamma \mid d(a,x) = R\}$. 

Let $q$ be a $\Gamma $-equivalent bicombing, that is, a function assigning to each $(a,b)\in \Gamma \times \Gamma$ an oriented edge-path $q[a,b]$ from $a$ to $b$ satisfying $q[g\cdot a, g\cdot b] = g\cdot q[a,b]$ for each $a,b,g\in \Gamma $.  
Set $\mathbb{L} = \mathbb{Q}$ if $ \mathbb{K} = \mathbb{R}$ and $\mathbb{L} = \mathbb{Q}+ \sqrt{-1} \mathbb{Q}$ if $ \mathbb{K} = \mathbb{C}$.  
Let $$C_0(\Gamma ,\mathbb{L}) = \left\{\sum_{\gamma \in \Gamma } c_\gamma \gamma \mid c_\gamma \in \mathbb{L},\ c_\gamma =0 \text{ for all but finitely many } \gamma \right\}$$ 
where we  identify $\Gamma $ with the standard basis of $C_0(\Gamma , \mathbb{L})$. 
Therefore the left action of $\Gamma $ on itself induces an action on $C_0(\Gamma , \mathbb{L})$. 
\begin{proposition}[\cite{MR1866802}]
There is a function $f:\Gamma \times \Gamma \to C_0(\Gamma , \mathbb{L})$ satisfies the following conditions. 
\begin{enumerate}
\item For each $a,b\in \Gamma$, $f(a,b)$ is a convex combination, i.e. its coefficients are non-negative and sum up to 1. 
\item If $d(a,b) \le 10 \delta$, then $f(b,a)=a$. 
\item If $d(a,b) \ge 10 \delta$, then $\supp f(b,a) \subset B(q[b,a](10\delta),\delta) \cap S(b,10\delta)$. 
\item f is $\Gamma$-equivariant, i.e. $f(g\cdot a ,g\cdot b) =g\cdot f(a,b)$ for any $g,a,b \in \Gamma$. 
\item There exist constants $L \ge 0$ and $0 < \lambda <1$ such that, for all $a,a',b \in \Gamma $, 
$$\|f(b,a) -f(b,a')\|_1 \le L \lambda ^{(a|a')_b},$$
where $(a|a')_b$ is the Gromov product defined by 
$$(a|a')_b=\frac{1}{2}[d(b,a)+d(b,a')-d(a,a')]$$ 
and $$\left\|\sum_{\gamma \in \Gamma } c_\gamma \gamma \right\|_1 = \sum_{\gamma \in \Gamma } |c_\gamma|$$
for $\sum_{\gamma \in \Gamma } c_\gamma \gamma \in C_0(\Gamma , \mathbb{L})$. 
\end{enumerate}
\end{proposition}
For an N-function $\Phi \in \Delta_2^{\mathbb{N}}$, endow $C_0(\Gamma , \mathbb{L})$ with the gauge-norm such as 
$$\left\|\sum_{\gamma \in \Gamma } c_\gamma \gamma \right\|_{(\Phi)} = \inf \left\{ b>0 \mid \sum_{\gamma \in \Gamma } \Phi \left(\frac{|c_\gamma|}{b}\right)\le 1 \right\} . $$
Since we can write $f(b,a)-f(b,a') = \sum_{\gamma \in B(b,10\delta ) }  c_\gamma \gamma$, we get 
\begin{eqnarray*}
&& \|f(b,a)-f(b,a')\|_{(\Phi)} \\ 
&=& \left\| \sum_{\gamma \in B(b,10\delta )  } c_\gamma \gamma\right\|_{(\Phi)}  \\ 
&=& \inf \left\{ b>0 \mid \sum_{\gamma \in B(b,10\delta ) }  \Phi \left(\frac{|c_\gamma|}{b}\right) \le 1 \right\} \\ 
&\le & \inf \left\{ b>0 \mid \sum_{\gamma \in B(b,10\delta ) }  \Phi\left(\frac{\sum_{\gamma' \in B(b,10\delta ) }|c_{\gamma'}|}{b}\right) \le 1 \right\} \\ 
&=&  \inf \left\{ b>0 \mid  \frac{\| \sum_{\gamma \in B(b,10\delta )  } c_\gamma \gamma \|_1}{b} \le \Phi^{-1} \left( \frac{1}{|B(b,10\delta )|} \right) \right\} \\ 
&=& \frac{\| f(b,a)-f(b,a') \|_1}{\Phi^{-1} \left( \frac{1}{|B(b,10\delta )|} \right)}   .  
\end{eqnarray*}
Similarly, we can write $f(b,a) = \sum_{\gamma \in B(b,10\delta ) }  c'_\gamma \gamma$, and by the non-normalized H\"older inequality for Orlicz space, we get 
\begin{eqnarray*} 
1 
= \| f(b,a) \|_1 
\le  2\| f(b,a) \|_{(\Phi)} \| \chi_{B(b,10\delta ) }\|_{(\Phi^*)} 
= \frac{2\| f(b,a) \|_{(\Phi)} }{(\Phi^*)^{-1}(\frac{1}{|B(b,10\delta ) |})} . 
\end{eqnarray*}
As in \cite{MR2221161}, for each pair $a,b \in \Gamma $, define 
$h(b,a) = \frac{f(b,a)}{\|f(b,a)\|_{(\Phi)}}$. 
Then 
\begin{eqnarray*}
&& \left\| h(b,a)-h(b,a') \right\|_{(\Phi)} \\ 
&=& \left\|  \frac{f(b,a)}{\|f(b,a)\|_{(\Phi)}}-\frac{f(b,a')}{\|f(b,a')\|_{(\Phi)}} \right\|_{(\Phi)} \\ 
& \le & \frac{\left\| f(b,a) - f(b,a') \right\|_{(\Phi)}}{\|f(b,a)\|_{(\Phi)}} 
+ \frac{|\|f(b,a')\|_{(\Phi)} - \|f(b,a)\|_{(\Phi)}|}{\|f(b,a)\|_{(\Phi)}} \\  
& \le & 2 \frac{\left\| f(b,a) - f(b,a') \right\|_{(\Phi)}}{\|f(b,a)\|_{(\Phi)}} \\  
&\le & \frac{ 4\| f(b,a)-f(b,a') \|_1}{\Phi^{-1} \left( \frac{1}{|B(b,10\delta )|} \right)(\Phi^*)^{-1}\left(\frac{1}{|B(b,10\delta ) |}\right)} \\ 
& = & C \lambda ^{(a|a')_b}  
\end{eqnarray*}
where $C=\frac{ 4L }{\Phi^{-1} \left( \frac{1}{|B(b,10\delta )|} \right)(\Phi^*)^{-1}\left(\frac{1}{|B(b,10\delta ) |}\right)}$. 
Since $\Phi \in \Delta_2^{\mathbb{N}}$, the Orlicz space $\ell^{\Phi}(\Gamma,\mathbb{K})$ is the completion of $C_0(\Gamma, \mathbb{L})$ with respect to the gauge norm. 
Notice that the $\Gamma $ action on $C_0(\Gamma, \mathbb{L})$ can be extended to an isometric action on $\ell^{\Phi}(\Gamma, \mathbb{K})$. 

Let $\pi$ be a linear isometric action on $X=\ell^\Psi(\Gamma,\ell^\Phi(\Gamma,\mathbb{K}))$ defined by 
$$(\pi(g)\xi)(\gamma) = g(\xi(g^{-1}\gamma )) = (\xi(g^{-1}\gamma ))(g^{-1} \cdot)$$
for all $\xi \in X$ and $g,\gamma \in \Gamma$. 
Define a function $\eta :\Gamma \to \ell^{\Phi }(\Gamma ,\mathbb{K}) $, by $$\eta (\gamma ) = h(\gamma ,e)$$ for all $\gamma \in \Gamma$, where $e$ is the identity element in $\Gamma$.   

Let $v > 0$ such that $|B(x,r)| \le v^r$ for all $x \in \Gamma $ and $r>0$. 
Choose $p=p(\Gamma ) \ge 2$ such that $ \lambda ^{p} v<\frac{1}{2} $.  
Let $\Psi $ be a N-function such that there is a constant $D>0$ and $t_0>0$ with $\Psi (t) \le Dt^p$ for all $0< t \le t_0$. 
Since $ (\pi(g)\eta -\eta)(\gamma) = g(h(g^{-1}\gamma,e)) - h(\gamma ,e) = h(\gamma,g) - h(\gamma ,e)$ holds, we have   
\begin{eqnarray*}
\sum_{\gamma \in \Gamma } \Psi \left(\frac{\|(\pi(g)\eta -\eta)(\gamma)\|_{(\Phi)}}{c}\right)  
&=& \sum_{\gamma \in \Gamma } \Psi \left(\frac{\|h(\gamma,g) - h(\gamma ,e)\|_{(\Phi)}}{c}\right) \\ 
&\le & \sum_{\gamma \in \Gamma } \Psi \left(\frac{C\lambda ^{(g|e)_\gamma}  }{c}\right) \\ 
&\le & \sum_{\gamma \in \Gamma } \Psi \left(\frac{C\lambda ^{(d(\gamma,e) - d(g,e))}  }{c}\right)  \\  
&\le & \sum_{n=0}^\infty \Psi \left(\frac{C\lambda ^{(n - d(g,e))}  }{c}\right)v^n .  
\end{eqnarray*} 
For each $g\in \Gamma $, we set $n_0(g) = \min \{n\in\mathbb{N} \mid C\lambda ^{(n - d(g,e))}  \le t_0\}$,  
and $$c_0(g) =\min \left\{c \ge 1 \mid n_0(g)\Psi \left(\frac{C\lambda ^{ - d(g,e)}}{c} \right)v^{n_0(g)} \le \frac{1}{2}\right\}. $$ 
Then since $0 < \lambda <1$ and $v>1$, for $c\ge c_0(g)$ we have 
\begin{eqnarray*}
&& \sum_{n=0}^\infty \Psi \left(\frac{C\lambda ^{(n - d(g,e))}  }{c}\right)v^n  \\ 
&\le & \sum_{n=0}^{n_0(g)-1} \Psi \left(\frac{C\lambda ^{(n - d(g,e))}  }{c}\right)v^n + \sum_{n=n_0(g)}^\infty \Psi \left(\frac{C\lambda ^{(n - d(g,e))}  }{c}\right)v^n  \\ 
&\le &  n_0(g) \Psi \left(\frac{C\lambda ^{ - d(g,e)}}{c_0(g)} \right)v^{n_0(g)} + \sum_{n=n_0(g)}^\infty D\left(\frac{C\lambda ^{(n - d(g,e))}  }{c}\right)^pv^n \\ 
&\le& \frac{1}{2} +  \frac{1}{c^p}D\left(C\lambda ^{ - d(g,e)}  \right)^p \sum_{n=n_0(g)}^\infty  (\lambda^{p}v)^n  . 
\end{eqnarray*} 
Therefore 
\begin{eqnarray*}
&& \|\pi(g)\eta -\eta\|_{(\Psi\Phi)} \\ 
&=& \inf \left\{ c>0 \mid \sum_{\gamma \in \Gamma } \Psi \left(\frac{\|(\pi(g)\eta -\eta)(\gamma)\|_{(\Phi)}}{c}\right)\le 1 \right\} \\ 
&\le & \inf \left\{ c\ge c_0(g)  \mid \sum_{\gamma \in \Gamma } \Psi \left(\frac{\|(\pi(g)\eta -\eta)(\gamma)\|_{(\Phi)}}{c}\right)\le 1 \right\} \\ 
&\le& \inf \left\{ c\ge c_0(g) \mid \frac{1}{2} +  \frac{1}{c^p}D\left(C\lambda ^{ - d(g,e)}  \right)^p \sum_{n=n_0(g)}^\infty  (\lambda^{p}v)^n \le 1 \right\} \\ 
&\le & \inf \left\{ c \ge c_0(g)  \mid 2D\left(C\lambda ^{ - d(g,e)}  \right)^p  \le c^p \right\} \\ 
&=& \max\{c_0(g), (2D)^{\frac{1}{p}} C\lambda ^{ - d(g,e)} \} < \infty .  
\end{eqnarray*} 
It follows that $\pi(g) \eta -\eta $ is an element in $X$ for each $g\in \Gamma $. 
We now define an affine isometric action $\alpha $ on $X$ by $\Gamma $ by 
$$ \alpha (g) \xi = \pi(g) \xi + \pi(g) \eta - \eta $$
for all $\xi \in X$ and $g \in \Gamma $. 
For $\gamma \in q[g,e] $ with $d(\gamma ,e ) \ge 10 \delta$ and $d(\gamma ,g ) \ge 10 \delta$, 
since 
$$ B(q[\gamma ,e](10\delta ),\delta) \cap  B(q[\gamma ,g](10\delta ),\delta) = \emptyset ,  $$
we have 
$ \supp h(\gamma ,e) \cap \supp h(\gamma ,g) = \emptyset$, 
and hence  $$\| h(\gamma,g)- h(\gamma ,e)\|_{(\Phi)} \ge 1.$$ 
Thus for $g\in \Gamma $ we have
\begin{eqnarray*}
\| \pi(g)\eta-\eta\|_{(\Psi\Phi)} 
&=& \inf \left\{ c>0 \mid \sum_{\gamma \in \Gamma } \Psi \left(\frac{\|h(\gamma,g)- h(\gamma ,e)\|_{(\Phi)}}{c}\right)\le 1 \right\} \\ 
&\ge &  \inf \left\{ c>0 \mid \sum_{\gamma \in q[g,e] ; d(\gamma ,e ) \ge 10 \delta, d(\gamma ,g ) \ge 10 \delta} \Psi \left(\frac{1}{c}\right) \le 1 \right\} \\
&=& \frac{1}{\Psi^{-1}(\frac{1}{|\{\gamma \in q[g,e] ; d(\gamma ,e ) \ge 10 \delta, d(\gamma ,g ) \ge 10 \delta\}|})}\\ 
&\ge & \frac{1}{\Psi^{-1}(\frac{1}{d(g,e)-100\delta})}. 
\end{eqnarray*}
As a consequence, for every $\xi \in X$, we have 
$$\| \pi(g^{-1})\alpha (g) \xi - \xi \|_{(\Psi\Phi)}  = \| \alpha (g) \xi -\pi(g) \xi \|_{(\Psi\Phi)} = \| \pi(g)\eta-\eta\|_{(\Psi\Phi)} 
 \to \infty $$ 
as $d(g,e) \to \infty$. 
Therefore the affine isometric action $\beta (g) = \pi(g^{-1})\alpha (g) $ on $X $ of $\Gamma $ is proper. 
\end{proof}

\begin{bibdiv}
\begin{biblist}
\bib{MR771996}{article}{
   author={Arazy, Jonathan},
   title={Isometries of complex symmetric sequence spaces},
   journal={Math. Z.},
   volume={188},
   date={1985},
   number={3},
   pages={427--431},
}
\bib{MR2316269}{article}{
   author={Bader, Uri},
   author={Furman, Alex},
   author={Gelander, Tsachik},
   author={Monod, Nicolas},
   title={Property (T) and rigidity for actions on Banach spaces},
   journal={Acta Math.},
   volume={198},
   date={2007},
   number={1},
   pages={57--105},
}
\bib{MR1727673}{book}{
   author={Benyamini, Yoav},
   author={Lindenstrauss, Joram},
   title={Geometric nonlinear functional analysis. Vol. 1},
   series={American Mathematical Society Colloquium Publications},
   volume={48},
   publisher={American Mathematical Society, Providence, RI},
   date={2000},
   pages={xii+488},
}
\bib{MR2671183}{article}{
   author={Chatterji, Indira},
   author={Dru{\c{t}}u, Cornelia},
   author={Haglund, Fr{\'e}d{\'e}ric},
   title={Kazhdan and Haagerup properties from the median viewpoint},
   journal={Adv. Math.},
   volume={225},
   date={2010},
   number={2},
   pages={882--921},
}
\bib{MR0022312}{article}{
   author={Day, Mahlon M.},
   title={Some characterizations of inner-product spaces},
   journal={Trans. Amer. Math. Soc.},
   volume={62},
   date={1947},
   pages={320--337},
}
\bib{MR1501880}{article}{
   author={Clarkson, James A.},
   title={Uniformly convex spaces},
   journal={Trans. Amer. Math. Soc.},
   volume={40},
   date={1936},
   number={3},
   pages={396--414},
}
\bib{MR599455}{article}{
   author={Connes, A.},
   author={Weiss, B.},
   title={Property ${\rm T}$ and asymptotically invariant sequences},
   journal={Israel J. Math.},
   volume={37},
   date={1980},
   number={3},
   pages={209--210},
}
\bib{MR0578893}{article}{
   author={Delorme, Patrick},
   title={$1$-cohomologie des repr\'esentations unitaires des groupes de Lie
   semi-simples et r\'esolubles. Produits tensoriels continus de
   repr\'esentations},
   language={French},
   journal={Bull. Soc. Math. France},
   volume={105},
   date={1977},
   number={3},
   pages={281--336},
}
\bib{MR2157375}{article}{
   author={Delpech, Sylvain},
   title={Modulus of continuity of the Mazur map between unit balls of
   Orlicz spaces and approximation by H\"older mappings},
   journal={Illinois J. Math.},
   volume={49},
   date={2005},
   number={1},
   pages={195--216 (electronic)},
}
\bib{MR1957004}{book}{
   author={Fleming, Richard J.},
   author={Jamison, James E.},
   title={Isometries on Banach spaces: function spaces},
   series={Chapman \& Hall/CRC Monographs and Surveys in Pure and Applied
   Mathematics},
   volume={129},
   publisher={Chapman \& Hall/CRC, Boca Raton, FL},
   date={2003},
   pages={x+197},
}
\bib{MR0340464}{article}{
   author={Guichardet, Alain},
   title={Sur la cohomologie des groupes topologiques. II},
   language={French},
   journal={Bull. Sci. Math. (2)},
   volume={96},
   date={1972},
   pages={305--332},
}
\bib{MR1295579}{article}{
   author={Kalton, N. J.},
   author={Randrianantoanina, Beata},
   title={Surjective isometries on rearrangement-invariant spaces},
   journal={Quart. J. Math. Oxford Ser. (2)},
   volume={45},
   date={1994},
   number={179},
   pages={301--327},
}
\bib{MR0126722}{book}{
   author={Krasnosel{\cprime}ski{\u\i}, M. A.},
   author={Ruticki{\u\i}, Ja. B.},
   title={Convex functions and Orlicz spaces},
   series={Translated from the first Russian edition by Leo F. Boron},
   publisher={P. Noordhoff Ltd., Groningen},
   date={1961},
   pages={xi+249},
}
\bib{MR0158259}{article}{
   author={Lumer, Gunter},
   title={On the isometries of reflexive Orlicz spaces},
   journal={Ann. Inst. Fourier (Grenoble)},
   volume={13},
   date={1963},
   pages={99--109},
}
\bib{MR1866802}{article}{
   author={Mineyev, I.},
   title={Straightening and bounded cohomology of hyperbolic groups},
   journal={Geom. Funct. Anal.},
   volume={11},
   date={2001},
   number={4},
   pages={807--839},
}
\bib{MR1113700}{book}{
   author={Rao, M. M.},
   author={Ren, Z. D.},
   title={Theory of Orlicz spaces},
   series={Monographs and Textbooks in Pure and Applied Mathematics},
   volume={146},
   publisher={Marcel Dekker, Inc., New York},
   date={1991},
   pages={xii+449},
}
\bib{MR1890178}{book}{
   author={Rao, M. M.},
   author={Ren, Z. D.},
   title={Applications of Orlicz spaces},
   series={Monographs and Textbooks in Pure and Applied Mathematics},
   volume={250},
   publisher={Marcel Dekker, Inc., New York},
   date={2002},
   pages={xii+464},
}
\bib{MR2221161}{article}{
   author={Yu, Guoliang},
   title={Hyperbolic groups admit proper affine isometric actions on $l^p$-spaces},
   journal={Geom. Funct. Anal.},
   volume={15},
   date={2005},
   number={5},
   pages={1144--1151},
}
\end{biblist}
\end{bibdiv}

\vspace{5mm}
\noindent 
Mamoru Tanaka,\\ 
Advanced Institute for Materials Research, Tohoku University, Sendai, 980-8577 Japan\\ 
E-mail: mamoru.tanaka@wpi-aimr.tohoku.ac.jp
\end{document}